\RequirePackage{iftex}
\ifpTeX
  \PassOptionsToPackage{dvipdfmx}{graphicx}
  \PassOptionsToPackage{dvipdfmx}{xcolor}
  \PassOptionsToPackage{dvipdfmx}{hyperref}
  
\fi
\documentclass[a4paper,11pt]{amsart}

\usepackage[english]{babel}
\usepackage[utf8]{inputenc}
\usepackage[T1]{fontenc}

\usepackage{amsmath,amssymb,amsthm,amscd}
\usepackage{mathtools}
\usepackage{mathrsfs}
\usepackage{calligra}

\usepackage{enumitem}
\usepackage{comment}
\usepackage{float}

\usepackage{tikz}
\usetikzlibrary{calc,arrows,matrix,shapes.geometric}

\usepackage{tikz-cd}

\usepackage[all]{xy}

\usepackage{hyperref}
\hypersetup{
  colorlinks=true,
  allcolors=cyan
}

\usepackage{amsthm}
\usepackage{geometry}
\newtheorem{theorem}{Theorem}[section]
\newtheorem{proposition}[theorem]{Proposition}
\newtheorem{claim}[theorem]{Claim}
\newtheorem{lemma}[theorem]{Lemma}
\newtheorem{corollary}[theorem]{Corollary}

\theoremstyle{definition}
\newtheorem{definition}[theorem]{Definition}

\newtheorem{example}[theorem]{Example}
\newtheorem{remark}[theorem]{Remark}

\newtheorem{setup}[theorem]{Setup}

\newcommand{\Hom}[0]{\operatorname{Hom}}

\newcommand{\End}[0]{\operatorname{End}}

\newcommand{\Exc}{\mathrm{Exc}}

\newcommand{\dbar}{\overline{\partial}}

\def \CC {\mathbb C} %
\def \PP {\mathbb P}  %
\DeclareMathOperator{\id}{id} %

\let\Gamma\varGamma
\let\Delta\varDelta
\let\Theta\varTheta
\let\Lambda\varLambda
\let\Xi\varXi
\let\Pi\varPi
\let\Sigma\varSigma
\let\Upsilon\varUpsilon
\let\Phi\varPhi
\let\Psi\varPsi
\let\Omega\varOmega

\newcommand{\C}{\mathbb{C}}
\newcommand{\R}{\mathbb{R}}
\newcommand{\B}{\mathbb{B}}

\newcommand{\cO}{\mathcal{O}}

\newcommand{\cE}{\mathcal{E}}
\newcommand{\ddc}{\sqrt{-1}\partial\dbar}
\newcommand{\supp}{\operatorname{supp}}
\newcommand{\dist}{\operatorname{dist}}

\begin{document}
\title{Atiyah classes and ellipticity of Oka manifolds}

\author[Y. Kusakabe]{Yuta Kusakabe}
\address{Faculty of Mathematics, Kyushu University, Fukuoka 819-0395, Japan}
\email{kusakabe@math.kyushu-u.ac.jp}

\author[S. Matsumura]{Shin-ichi Matsumura}
\address{Mathematical Institute 
$\&$ Division for the Establishment of Frontier Science of Organization for Advanced Studies, 
Tohoku University, 
6-3, Aramaki Aza-Aoba, Aoba-ku, Sendai 980-8578, Japan}
\email{mshinichi-math@tohoku.ac.jp}
\email{mshinichi0@gmail.com}

\renewcommand{\subjclassname}{%
\textup{2020} Mathematics Subject Classification}
\subjclass[2020]{Primary 32Q56; Secondary 32L05, 32L10, 32Q15, 32W05, 14J28.}
\keywords{Oka manifold, elliptic manifold, dominating spray, Gromov's conjecture, weakly pseudoconvex manifold, Atiyah class, blow-up, Kummer surface}

\begin{abstract}
We prove that every weakly pseudoconvex Oka manifold admitting
a positive line bundle is elliptic in the sense of Gromov, 
thereby proving Gromov's ellipticity conjecture.

The proof establishes a cohomological construction of local
sprays. Given a bundle morphism $\varphi\colon E\to T_X$, we
introduce quadratic $\varphi$-vector fields on $E$ whose flows
yield local sprays with fibre derivative $\varphi$. Their
existence is characterized by the vanishing of the symmetrized
$\varphi$-Atiyah class of $E$. This gives a local dominating
spray on every weakly pseudoconvex manifold admitting a
positive line bundle, without any Oka assumption.

The positivity hypothesis is essential even for local
existence: blow-ups at a single point of complex tori of algebraic
dimension zero, and Kummer
surfaces of algebraic dimension zero, admit no local dominating
spray. The torus examples give compact K\"ahler Oka manifolds
that are not elliptic and show that ellipticity is not preserved
under blowing up, and that it is neither open nor closed in
holomorphic families of compact Oka manifolds. Combined with the
work of Xie and Zhao, the Kummer examples yield Oka K3 surfaces
that are not elliptic.

Under the additional Oka assumption, we globalize the local
dominating sprays constructed above. The proof combines
Oka approximation with a gluing argument based on weighted
$L^2$ estimates for the $\bar\partial$-equation.
\end{abstract}

\maketitle

\vspace*{-2\baselineskip}
\tableofcontents

\newpage

\section{Introduction}\label{sec-intro}

We investigate the relation between the Oka property and ellipticity
through the existence and globalization of local dominating
sprays. Our main result is the following.

\begin{theorem}[Gromov's conjecture for weakly pseudoconvex manifolds]\label{thm-main}
Every weakly pseudoconvex Oka manifold admitting a positive line bundle is elliptic in the sense of Gromov.
\end{theorem}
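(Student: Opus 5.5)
The plan has two stages, following the strategy announced in the abstract. First I construct a \emph{local} dominating spray on $X$, using only weak pseudoconvexity and the positive line bundle. Then I use the Oka property to globalize it.

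\textbf{Local stage.} Fix a positive line bundle $L\to X$ and a smooth plurisubharmonic exhaustion $\psi$ of $X$. By weighted $L^2$ estimates for $\dbar$ (Demailly/Nakano type, with weights $e^{-m\psi}$ and metrics on $L^{k}$), I will find a holomorphic vector bundle $E=(L^{-k})^{\oplus N}$ and a surjective bundle morphism $\varphi\colon E\to T_X$. Concretely, I choose finitely many holomorphic sections of $T_X\otimes L^{k}$ that generate at every point of $X$. This needs an exhaustion argument with $k$ growing, or a twisting by a function of $\psi$; I take $E$ a direct sum of negative line bundles for this. The main cohomological input is then the characterization stated in the introduction. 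Quadratic $\varphi$-vector fields on $E$ exist if and only if the symmetrized $\varphi$-Atiyah class vanishes, in $H^1(X,\operatorname{Sym}^2E^*\otimes T_X)$. Here $\operatorname{Sym}^2E^*\otimes T_X$ is $T_X\otimes L^{2k}$ up to a direct sum. So by Nakano/Kodaira-type vanishing on the weakly pseudoconvex manifold $X$, after increasing $k$ this $H^1$ vanishes, using the positivity of $L^{2k}$ to dominate the curvature of $T_X$ on sublevel sets. Because $X$ is only exhaustible, I expect to need vanishing on each sublevel set $\{\psi<c\}$ with $k$ uniform. To get this, I modify the metric of $L$ by $e^{-\chi(\psi)}$ for a rapidly growing convex $\chi$, so that $L$ becomes positive enough everywhere. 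The flow of the resulting quadratic vector field, restricted to a neighbourhood of the zero section, gives a local spray $s\colon U\subset E\to X$ with $s(0_x)=x$ and fibre derivative $\varphi$. This spray is dominating since $\varphi$ is surjective.

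\textbf{Global stage.} I exhaust $X$ by sublevel sets $X_1\Subset X_2\Subset\cdots$. On $X_j$ the local spray is defined on a tube $\{|e|<\varepsilon_j\}$. Using a fibrewise homothety $e\mapsto te$ and the Oka property (Oka approximation for maps from the Stein manifold $E|_{X_j}$ or its tube), I extend or approximate $s$ by holomorphic maps defined on all of $E$ near $X_j$. The issue is that the Oka principle gives maps $E\to X$ only \emph{approximating} $s$ on compacts, and they need not satisfy $s(0_x)=x$ exactly. I will fix this by working with sprays over a Stein neighbourhood in $E\times X$, and I correct the error by a gluing step. The gluing composes with a transition map close to the identity, obtained by solving a $\dbar$-problem with weighted $L^2$ estimates, as in the abstract. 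This keeps the center condition $s|_{0_E}=\mathrm{id}$ and the domination exactly. Iterating over $j$ with summable errors gives a global holomorphic $s\colon E\to X$ that is dominating, so $X$ is elliptic.

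\textbf{Main obstacle.} I expect the globalization to be the main obstacle. Getting maps defined on the whole fibres of $E$ while preserving the zero section exactly and keeping uniform convergence on the exhaustion is the delicate point. It will require careful weights in the $L^2$ estimates so that the corrections are small on $X_j$ and the process converges.
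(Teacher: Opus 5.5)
Your two-stage architecture (a local spray from the Atiyah-class criterion, then globalization by Oka approximation and weighted $L^2$ gluing) matches the paper's, but both stages have genuine gaps.

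\emph{Local stage.} The obstruction lives in $H^1(X,S^2E^\vee\otimes E)$, not in $H^1(X,S^2E^\vee\otimes T_X)$, because the quadratic term is the vertical, $E$-valued part of the vector field. So for $E=(L^{-k})^{\oplus N}$ it is a sum of copies of $H^1(X,L^{\otimes k})$. More seriously, a surjection $(L^{-k})^{\oplus N}\to T_X$ with $k$ fixed need not exist. Twisting the metric by $e^{-\chi(\psi)}$ cannot help: global generation is a property of the holomorphic bundle, and $\chi(\psi)$ is constant on every compact curve. For instance, blow up $\mathbb C^2$ at a discrete sequence $p_j$, and then at $j-1$ points of the $j$-th exceptional curve. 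Its strict transform $C_j\cong\mathbb P^1$ then has $C_j^2=-j$. With $E_{j,i}$ the new exceptional curves and $D_j=jC_j+(j+1)\sum_iE_{j,i}$, the bundle $L=\mathcal O_X(-\sum_jD_j)$ is positive and $L\cdot C_j=1$. Since $T_X\otimes L^{\otimes k}|_{C_j}$ surjects onto $N_{C_j}\otimes L^{\otimes k}\cong\mathcal O(k-j)$, it is not globally generated for $j>k$. Also $L^{\otimes k}\otimes K_X^{-1}$ has degree $k+2-j$ on $C_j$, so it is never positive, and Nakano's theorem does not apply to $H^1(X,L^{\otimes k})$ for any $k$. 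The missing idea is to use an adjoint line bundle. Take $H=\iota^*\mathcal O(1)$ for Takayama's embedding $\iota\colon X\hookrightarrow\mathbb P^N$ and set $B=K_X\otimes H^{\otimes(2n+1)}$. Then:
\begin{itemize}
\item $T_X\otimes B\cong\bigwedge^{n-1}(\Omega^1_X\otimes H^{\otimes2})\otimes H^{\otimes3}$ is generated by finitely many sections, via the Euler sequence.
\item For $E=(B^\vee)^{\oplus m}$ the obstruction lies in copies of $H^1(X,K_X\otimes H^{\otimes(2n+1)})$, which vanishes by Nakano's theorem with no curvature to dominate.
\item $K_X^{-1}\otimes B\cong H^{\otimes(2n+1)}$ is semipositive. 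This is exactly the curvature condition the later $L^2$ estimates on the total space need, and it is how Theorem~\ref{thm:weakly} and Theorem~\ref{thm:local-to-global-spray} are combined.
\end{itemize}

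\emph{Global stage.} Oka approximation is not available on ``the Stein manifold $E|_{X_j}$''. It contains the zero section over $X_j$, which may contain compact curves (as above) and is all of $X$ when $X$ is compact, so it is not Stein. The same objection applies to neighbourhoods of the graph of $s$ in $E\times X$, and $E$ need not even be $1$-convex. Your sketch also does not say on which domains the $\dbar$-correction is solved, how it restores the $1$-jet along the zero section exactly, or how the map becomes defined on whole fibres rather than on tubes over $X_j$. The paper does all of this in Theorem~\ref{thm:oka}:
\begin{itemize}
\item It exhausts the total space $E$ by the disc bundles $\{\theta_w<1\}$, $\theta_w=|\xi|^2e^{|\xi|^2}e^{w\circ\pi}$. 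These are strictly psh without critical points off the zero section $A$, so each enlargement is a finite chain of convex bumps in charts disjoint from $A$, and no handles occur.
\item Oka approximation is used only on these bumps.
\item The sprays of maps need to be dominating only off $A$ (Lemma~\ref{lem:fibresubmersive}, $H(\xi,\Lambda)=g(\xi+\Lambda\xi)$).
\item Transition maps are split by a Newton iteration. Its linear steps are minimal $L^2$ solutions of $\dbar$ on the weakly pseudoconvex sublevel sets, with the weight $q^{-m}$ singular along $A$; this weight forces the corrections to vanish to order $r$ along $A$.
\end{itemize}
This yields a global map agreeing with $s$ to order $2$ along $A$, hence still a dominating spray. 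Without such a mechanism, your global stage only restates the goal.
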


A complex manifold $X$ is called \emph{Oka} if every continuous
map from a Stein manifold to $X$ satisfies the homotopy principle;
that is, it can be deformed into a holomorphic map, with
approximation and interpolation
(see \cite[Chap.~5]{Forstneric2017book} for the precise definition,
and \cite{Forstneric2006} and \cite[Sect.~5.4]{Forstneric2017book}
for equivalent formulations).
A complex manifold $X$ is called \emph{elliptic} if a vector
bundle $p \colon  E \to X$ admits a dominating spray; that is, a
holomorphic map $s\colon E\to X$ from the total space of $E$
such that $s(0_x)=x$ and $(ds)_{0_x}(E_x)=T_{X,x}$ for every $x\in X$.

Gromov proved that every elliptic manifold is Oka \cite{Gromov1989JAMS}
(see \cite[Chap.~5]{Forstneric2017book} for a proof and
\cite[Sect.~6.4]{Forstneric2017book} for examples).
He stated, without proof, that the converse holds for holomorphically
convex manifolds admitting a positive line bundle
\cite[3.2.A$''$]{Gromov1989JAMS}. We refer to this as
\textit{Gromov's conjecture}.
He also observed that the converse holds for Stein manifolds
\cite[3.2.A]{Gromov1989JAMS}.
The converse fails without additional assumptions: the first author
found Oka manifolds that are not elliptic \cite{Kusakabe2020}.
For instance, if $n\ge2$, the complement $\mathbb C^n\setminus K$ of a compact
polynomially convex set $K$ is Oka \cite[Theorem~1.6]{Kusakabe2024},
whereas it is not elliptic if $n\ge3$ and $K$ has nonempty interior
\cite{AndristShcherbinaWold2016}.
All these examples are noncompact.
Recently, Forstneri\v{c} and L\'arusson settled the projective
case \cite{Forstneric2026MRL}.
Theorem~\ref{thm-main} unifies and extends these results,
proving Gromov's conjecture for weakly pseudoconvex manifolds
admitting a positive line bundle.
The class of weakly pseudoconvex manifolds includes
Stein, projective, and holomorphically convex manifolds
and arises naturally in several complex variables.

The positivity hypothesis cannot be omitted, even among compact
K\"ahler Oka manifolds. Our second main result shows that this obstruction
already occurs at the level of local dominating sprays.

\begin{theorem}[Compact K\"ahler manifolds without local dominating sprays;
see Examples~\ref{ex:torus-blowup-no-local-spray} and~\ref{ex:kummer-no-local-spray}]\label{thm-example}
\ \ 
\begin{itemize}
\item[$(1)$] Let $\mathbb T$ be a complex torus of algebraic dimension $0$.
Then the blow-up $\pi\colon X:=\operatorname{Bl}_{t_0}(\mathbb T)\to\mathbb T$ at a point $t_0 \in \mathbb T$ admits no local
dominating spray.

\item[$(2)$] Every Kummer surface $X$ of algebraic dimension $0$
admits no local dominating spray.

\end{itemize}
Here a \textit{local dominating spray} is 
a dominating spray $s \colon U \to X$ 
defined only on an open neighbourhood $U$ of the zero section. 
\end{theorem}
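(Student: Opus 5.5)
The strategy is to turn a local dominating spray into a cohomological condition, using the $\varphi$-Atiyah class introduced in the abstract, and then show that this condition fails on $X$ because $X$ has too few holomorphic objects. Let $s\colon U\to X$ be a local dominating spray on a bundle $p\colon E\to X$, and set $\varphi:=(ds)|_{0}\colon E\to T_X$, which is surjective. The second-order jet of $s$ along the zero section should produce a holomorphic splitting of a suitable jet or extension sequence. Concretely, it should force the vanishing of the symmetrized class $\operatorname{Sym}\,\mathrm{At}_\varphi(E)\in H^1(X,\operatorname{Hom}(\operatorname{Sym}^2E,T_X))$. This is the "only if" half of the characterization of quadratic $\varphi$-vector fields, and I take it to be proved as in the paper's main construction. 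The plan is therefore to show that no pair $(E,\varphi)$ with $\varphi$ surjective has vanishing symmetrized $\varphi$-Atiyah class. A simpler route to try first: since $\varphi$ is surjective and $X$ is compact, the vanishing should make $T_X$ carry many holomorphic sections, or at least a holomorphic connection-like structure after twisting by $\operatorname{Hom}$ from $E$.

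For case (1), the key input is that algebraic dimension zero forces $E$ to be very rigid. I would first argue, in the style of Atiyah's classification of bundles on tori with $a(\mathbb T)=0$, that after pulling back and restricting we may control $E$. On $X=\operatorname{Bl}_{t_0}\mathbb T$, we have $T_X\to\pi^*T_{\mathbb T}=\mathcal O^n$, and this map fails to be surjective exactly along the exceptional divisor $D\cong\mathbb P^{n-1}$. There $T_X|_D$ contains $\mathcal O_D(-1)$ (the normal bundle) and $T_D$. Now compose $\varphi$ with $T_X\to\mathcal O_X^n$, restrict to a fibre direction, and use the quadratic term of the spray. The second-order part of $s$ at points of $D$ should yield holomorphic vector fields on $X$ that span at generic points, or else a nontrivial holomorphic section of the tangent bundle of $\mathbb T$ vanishing at $t_0$. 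But $H^0(X,T_X)=0$: these are the vector fields on $\mathbb T$ vanishing at $t_0$, and the only vector fields on $\mathbb T$ are constant, hence nonvanishing unless zero. The aim is to reach this contradiction through the Atiyah class. The nonvanishing of the obstruction should come from pairing it against the class of $D$ via $H^1(X,\Omega^1_X)$ and using that $a(\mathbb T)=0$ forbids divisors and line bundles with sections on $\mathbb T$. In particular, $E$ cannot map onto anything generated by sections except trivially.

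Case (2) would be handled the same way. A Kummer surface $X$ is the minimal resolution of $\mathbb T/\pm1$, which is $\operatorname{Bl}$ of the sixteen 2-torsion points modulo the involution. Pulling back a local spray along the degree-two cover $\operatorname{Bl}_{16}\mathbb T\to X$, which is étale off the exceptional curves, gives a local dominating spray on $\operatorname{Bl}_{16}\mathbb T$. Composing with the local biholomorphism near the zero section, this reduces (2) to a multi-point version of (1).

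The main obstacle is the step that proves the symmetrized Atiyah class is nonzero for every surjective $\varphi$. For arbitrary $E$ this requires controlling all holomorphic bundles on a non-algebraic manifold. I would try to restrict everything to the exceptional divisor $D\cong\mathbb P^{n-1}$, where $E|_D$ splits into line bundles by Grothendieck in the surface case and more generally by a restriction to lines. On $D$ the failure of $T_X|_D$ to be generated by a quadratic flow would be visible through the negative summand $\mathcal O(-1)$. I would combine this with the global fact that $H^0(X,T_X)=0$ and that $a(X)=0$ makes all line bundles on $X$ without sections, except multiples of $\mathcal O(D)$.
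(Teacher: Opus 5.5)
\textbf{There is a genuine gap: the key step, non-vanishing of the obstruction for every surjective $\varphi$, is unsupported, and the reduction of $(2)$ to $(1)$ fails.}

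\emph{The reduction to an Atiyah class.} Your reduction rests on a false implication. Theorem~\ref{thm:criterion} says that $\alpha_\varphi(E)=0$ is equivalent to the existence of a quadratic $\varphi$-vector field. Its ``only if'' half starts from such a vector field, and a local spray with fibre derivative $\varphi$ need not come from one. Remark~\ref{rem:quadratic-versus-local} shows this explicitly: on an elliptic curve, $E=T_X\oplus L$ with $\deg L\neq0$ carries a global dominating spray with fibre derivative $\varphi$, although $\alpha_\varphi(E)\neq0$.

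The second-order fibre jet of $s$ does impose a genuine condition. But it is a different, $T_X$-valued class in $H^1(X,S^2E^\vee\otimes T_X)$, essentially the Atiyah class of $T_X$ pulled back by $\varphi$ in both slots. It is only the first of infinitely many jet obstructions. You give no argument that it is nonzero for every $E$ and every surjective $\varphi$. You flag this as the main obstacle, and it is in fact the whole content of the statement. There is also no classification of vector bundles on tori with $a(\mathbb T)=0$ to fall back on; Atiyah's classification concerns elliptic curves.

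\emph{The hypothesis $a(\mathbb T)=0$ never enters.} The contradictions you aim for do not see this hypothesis. First, $H^0(X,T_X)=0$ holds for the blow-up of every torus at a point, including abelian varieties. Those blow-ups are projective and Oka, hence elliptic (Remark~\ref{rem:torus-blowup-families}). Second, a neighbourhood of $\Exc(\pi)$, such as the blow-up of a small ball, is weakly pseudoconvex with a positive line bundle, so it has local dominating sprays by Theorem~\ref{thm:weakly}. Information on $D\cong\mathbb P^{n-1}$ is therefore useless without a global input that genuinely uses $a(\mathbb T)=0$.

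\emph{The missing idea.} What is needed is such a global input, applied to all Taylor coefficients at once. The paper does this as follows.
\begin{itemize}
\item It takes the displacement $\delta=\pi\circ s-\pi\circ p$, whose fibrewise Taylor coefficients $\delta_k$ are global sections of $S^kE^\vee\otimes\mathbb C^n$.
\item The ratios $\delta_k/\gamma^k$, with $\gamma=\delta_1^1$, are meromorphic functions on $\mathbf P(E)$, so they factor through an algebraic reduction $\alpha$.
\item The hypothesis $a(\mathbb T)=0$ forces $(\pi\circ q,\alpha)$ to be dominant onto $\mathbb T\times Z$. This yields $\operatorname{rank}(d\delta|_{\ker d(\alpha\circ\nu)})\le1$.
\item By analytic continuation this bound reaches a hypersurface $S$ with $s(S)\subset\Exc(\pi)$. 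There $\delta=-Q_0\circ p$ has rank $n\ge2$ in the same directions, a contradiction.
\end{itemize}

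\emph{Part $(2)$.} Your reduction of $(2)$ to a multi-point version of $(1)$ also fails. The double cover $\tau\colon\operatorname{Bl}_{\mathbb T[2]}\mathbb T\to X$ is branched along the curves $\Gamma_b$, not \'etale near them. So $s$ does not lift over a neighbourhood of the zero section above $\Gamma$. Indeed, suppose $\tilde s$ satisfied $\tau\circ\tilde s=s\circ\tau_E$ and $\tilde s(0_y)=y$, where $\tau_E\colon\tau^*E\to E$ is the natural map. Then at a ramification point $y$ the image of $\varphi_{\tau(y)}$ would lie in the image of $d\tau_y$, which is one-dimensional. This contradicts surjectivity.

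The paper avoids lifting the spray as follows.
\begin{itemize}
\item It works with the $(\pm1)$-invariant squared displacement $(\tilde s_x-\tilde s_x(0_x))^{\otimes2}$ on $X^\circ$.
\item It shows by Cauchy estimates that the Taylor coefficients of this map extend meromorphically across $\Gamma_b$.
\item It extends the displacement over $s^{-1}(\Gamma_a)$ on the branched fibre product $\widehat{\mathcal A}$. On the ramification locus the extension equals $Q_a\circ p$ and has rank two, contradicting the rank bound.
\end{itemize}
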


The manifold $X$ in the first assertion is Oka by
\cite[Corollary~6.4.13]{Forstneric2017book}
(see also Buzzard's theorem on tame sets \cite{Buzzard2003}).
Thus, it gives the first
example of a compact K\"ahler Oka manifold that is not elliptic,
answering a question of Forstneri\v{c} and L\'arusson
\cite{Forstneric2026MRL}.
The same example shows that ellipticity need not be preserved
under blowing up, even among compact K\"ahler manifolds.
Moreover, varying the torus in holomorphic families shows that
ellipticity is neither an open nor a closed condition in holomorphic
families of compact Oka manifolds
(Remark~\ref{rem:torus-blowup-families}); recall that the Oka property
itself is not closed in families \cite{ForstnericLarusson2014IMRN}.

Combined with the work of Xie and Zhao \cite{XieZhao2026},
the second assertion yields Oka K3 surfaces that are not
elliptic. It also gives another instance of the failure of
openness: there is a holomorphic family of Kummer surfaces whose
central fibre is the elliptic Fermat quartic, whereas all fibres
outside a countable set of parameters are not elliptic.

\medskip

We divide the construction of dominating sprays in the proof of
Theorem~\ref{thm-main} into two problems: 

\begin{enumerate}
\item[$(1)$]
\emph{Local existence.}
When does a complex manifold $X$, without any Oka assumption,
admit a vector bundle $E\to X$ carrying a local dominating spray?

\item[$(2)$]
\emph{Globalization.}
Given a local dominating spray on a weakly pseudoconvex
Oka manifold, under what additional conditions can we
construct a global dominating spray?
\end{enumerate}

A local dominating spray $s\colon U\to X$ on a vector bundle
$E\to X$ induces a surjective bundle morphism $E\to T_X$
through its fibre derivative. This leads to the following question: 

\begin{enumerate}
\item[$\bullet$]
Given a surjective bundle morphism
$\varphi\colon E\to T_X$, when can we integrate $\varphi$
to a local spray on $E$ whose fibre derivative is $\varphi$?
\end{enumerate}

We develop a cohomological approach to the local existence problem.
For this purpose, we introduce \emph{quadratic $\varphi$-vector fields}
on the total space of $E$ (see Definition~\ref{def:q-spray} and
Remark~\ref{rem:APS}). Their flows integrate $\varphi$ to a local
spray, called an \emph{exponential $\varphi$-spray}
(see Proposition~\ref{prop-integral}).

The existence of these vector fields is characterized by the
following cohomological criterion using the Atiyah class \cite{Atiyah1957}.

\begin{theorem}[Cohomological criterion for quadratic $\varphi$-vector fields, Theorem~\ref{thm:criterion}]
The following conditions are equivalent:
\begin{enumerate}
\item[$(1)$] The bundle $E$ admits a quadratic $\varphi$-vector field.
\item[$(2)$] The symmetrized $\varphi$-Atiyah class
$\alpha_\varphi(E)$ vanishes.
\end{enumerate}
\end{theorem}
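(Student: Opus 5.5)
We need to guess the definitions. A quadratic $\varphi$-vector field on the total space of $E$ is presumably a holomorphic vector field $V$ on $E$ that is homogeneous of degree $2$ along the fibres (it commutes with the Euler field $\varepsilon$ via $[\varepsilon,V]=V$) and satisfies $dp(V_e)=\varphi(e)$ for $e\in E$. The symmetrized $\varphi$-Atiyah class should be the image of the Atiyah class $a(E)\in H^1(X,\Omega^1_X\otimes\End E)$ under the map induced by $\varphi^*\colon \Omega^1_X\to E^*$ followed by symmetrization $E^*\otimes E^*\otimes E\to S^2E^*\otimes E$. This gives a class $\alpha_\varphi(E)\in H^1(X,S^2E^*\otimes E)$.

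The plan is a Čech computation. Cover $X$ by open sets $U_i$ trivializing $E$, with fibre coordinates $v$ and transition matrices $g_{ij}$, so $v_i=g_{ij}v_j$. In the chart $U_i\times\C^r$, a fibrewise quadratic vector field with horizontal part $\varphi$ has the unique form
\[
V_i=\varphi_i(v)\cdot\partial_x + Q_i(x)(v,v)\cdot\partial_v ,
\]
where $\varphi_i$ is the local matrix of $\varphi$ and $Q_i$ is a holomorphic section of $S^2E^*\otimes E$ over $U_i$ in the $i$-th frame. The first step is to verify this form: homogeneity of degree $2$ forces the horizontal coefficients to be fibrewise linear and the vertical coefficients to be fibrewise quadratic, and the condition $dp(V)=\varphi$ fixes the linear part.

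The second step is to compute how $V_j$ transforms under $(x,v_j)\mapsto(x,g_{ij}(x)v_j)$. The pushforward has vertical component $g_{ij}Q_j(v_j,v_j)+(dg_{ij}(\varphi_j v_j))\,v_j$. Rewriting this in the $i$-frame, the gluing condition $V_i=V_j$ on overlaps becomes
\[
Q_i-Q_j=\operatorname{Sym}\bigl(\varphi^*(dg_{ij}\,g_{ij}^{-1})\bigr)
\]
as sections of $S^2E^*\otimes E$ over $U_i\cap U_j$. Only the symmetric part appears because $Q(v,v)$ sees only symmetric bilinear forms. The cocycle $dg_{ij}g_{ij}^{-1}$ represents the Atiyah class up to sign and convention, so the right-hand side is a Čech cocycle representing $\alpha_\varphi(E)$. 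Hence local solutions $\{Q_i\}$ glue to a global $V$ exactly when this cocycle is a coboundary, which gives $(1)\Leftrightarrow(2)$. For the direction $(1)\Rightarrow(2)$, restrict a global $V$ to the charts to obtain the $Q_i$ exhibiting the coboundary. For $(2)\Rightarrow(1)$, take a $0$-cochain $\{Q_i\}$ with coboundary equal to the cocycle and define $V$ chartwise.

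The main obstacle is the bookkeeping in the second step. One must check that the correction term involving $dg_{ij}$ lands in $S^2E^*\otimes E$ with the correct frame twist, and that it matches the paper's definition and sign convention for the Atiyah class. If necessary, I would work intrinsically instead: local holomorphic connections $\nabla_i$ differ by $\nabla_i-\nabla_j\in\Omega^1\otimes\End E$ on overlaps. Each $\nabla_i$ determines the horizontal lift $\varphi(e)^{h_i}$, and the field $e\mapsto\varphi(e)^{h_i}$ is quadratic. Any two quadratic $\varphi$-fields differ by a vertical quadratic field, which is a section of $S^2E^*\otimes E$. Together these observations give the same cocycle with no coordinate computations.
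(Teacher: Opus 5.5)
Your proposal is correct and follows essentially the paper's route. The paper also writes the field in each trivialization with base part $\varphi(e_iw)$ and a fibrewise quadratic vertical part $-\gamma_i(w,w)$. It shows that the local fields glue exactly when $\Gamma_i-\Gamma_j$ equals the symmetrized $\varphi$-pullback of the Atiyah cocycle, using polarization to pass from values on $(v,v)$ to the symmetric tensor. Your convention $v_i=g_{ij}v_j$ produces $dg_{ij}\,g_{ij}^{-1}$ instead of the paper's $g_{ij}^{-1}dg_{ij}$, which changes only sign and convention and is irrelevant for vanishing. Your bracket condition $[\varepsilon,V]=V$ is equivalent to the paper's homogeneity $Z(tv)=t\,(dm_t)_vZ(v)$. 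Any trivializing cover may replace the paper's Stein cover, since $\check H^1(\mathcal U,\mathcal F)\to H^1(X,\mathcal F)$ is injective for every cover.
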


Here $\alpha_\varphi(E)$ is defined in terms of the Atiyah class
$a(E)$ of $E$ (see Definition~\ref{def:atiyah}).
The theorem identifies the obstruction to constructing
quadratic $\varphi$-vector fields and gives a sufficient
condition for integrating a prescribed bundle morphism into a
local spray. When $\varphi$ is surjective, the resulting spray is dominating.
Applying this criterion, we construct a local dominating spray on every weakly
pseudoconvex manifold admitting a positive line bundle
(Theorem~\ref{thm:weakly}). 
This construction does not require the Oka property.

\smallskip 

The Oka property enters in the globalization step, expressed
by the following theorem.

\begin{theorem}\label{thm:local-to-global-spray}
Let $X$ be a weakly pseudoconvex Oka manifold, and let $B\to X$ be a
positive line bundle such that $K_X^{-1}\otimes B^{\otimes k}$ admits a
smooth Hermitian metric with semipositive curvature for some integer $k\ge0$.
If $E=(B^\vee)^{\oplus N}$ admits, for some $N\ge1$, a local dominating
spray $s$, then $E$ admits a global dominating spray that
agrees with $s$ to order $2$ along the zero section.
In particular, $X$ is elliptic.
\end{theorem}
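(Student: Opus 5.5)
The plan is to globalize $s$ in two stages. First use the Oka property to produce global holomorphic maps $E\to X$ that approximate $s$ near the zero section. Then correct the approximation with weighted $L^2$ estimates for $\bar\partial$ so that the result agrees with $s$ to order $2$ along the zero section.

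\textbf{Reduction to finitely many fibres.} Write $E=(B^\vee)^{\oplus N}$, with points $(x,v_1,\dots,v_N)$ where $v_j\in B^\vee_x$. The total space of $E$ is a weakly pseudoconvex manifold: if $\psi$ is a psh exhaustion of $X$ and $h$ is a metric on $B$ with positive curvature, then $\psi(x)+\sum_j|v_j|^2_{h^{-1}}$ is a psh exhaustion. The zero section $Z\cong X$ is a closed submanifold, and $s$ is defined on a neighbourhood $U$ of $Z$. For each sublevel set $\{\psi<c\}$, pick a small disc-bundle neighbourhood $U_c\subset U$ over it. A global spray is built by an exhaustion, so we aim for a sequence of holomorphic maps $s_c\colon E|_{\{\psi<c\}}\to X$ that converge and stay equal to $s$ to order $2$ on $Z$.

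\textbf{Step 1: Oka approximation.} The map $s|_{U_c}$ is homotopic to the map $s\circ\rho$, where $\rho$ is fibrewise radial retraction onto a small disc bundle. Hence $s$ extends continuously to all of $E$ over a compact set. The disc-bundle region is holomorphically convex in $E$ (a sublevel set of the psh exhaustion), but $E$ itself is only weakly pseudoconvex. I would therefore work on the Stein-like pieces $E|_{\{\psi<c\}}$ minus a divisor, or use the fact that the Oka principle holds for maps from weakly pseudoconvex sources into Oka targets on relatively compact holomorphically convex domains (Forstneri\v{c}'s version with approximation on $\mathcal O$-convex sets). This gives holomorphic maps $\tilde s$ defined on large fibre radii which approximate $s$ uniformly near $Z$ and interpolate $s$ on $Z$ to first order. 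Interpolation with jets along the submanifold $Z$ is allowed in the Oka principle for Stein sources. Since $E$ is not Stein, I expect to need the correction step below.

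\textbf{Step 2: $\bar\partial$-gluing to fix the $2$-jet.} This is the main obstacle. Let $\chi$ be a cutoff equal to $1$ near $Z$ and supported in $U_c$. Glue $s$ and $\tilde s$ in a local holomorphic chart near the graph: write $\tilde s=\exp_s(w)$, where $w$ is a small section of $s^*T_X$. Then $\chi\cdot(\text{difference})$ gives a smooth map whose $\bar\partial$-error is supported in the annulus where $d\chi\neq0$. This error is small because $\tilde s\approx s$. To kill it, solve $\bar\partial u=g$ for a section $u$ of the pulled-back bundle $\tilde s^*T_X$, which is approximated by $p^*T_X$ over $E$. The solution must vanish to order $3$ on $Z$. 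Use H\"ormander/Demailly $L^2$ estimates on the weakly pseudoconvex manifold $E$ with weight $e^{-(n+3)\log\sum|v_j|^2_{h^{-1}}}$. This weight forces the vanishing order, and its singular part has curvature bounded below by the pullback of the curvature of $B$ (up to a negative multiple). The bundle needed is $p^*T_X\otimes K_E^{-1}$, and $K_E=p^*(K_X\otimes B^{\otimes N})$. The hypothesis that $K_X^{-1}\otimes B^{\otimes k}$ is semipositive, combined with $B>0$, supplies enough positivity after twisting by a large power of $B$, via the fibre coordinates $v_j$ (themselves sections of $p^*B^\vee$). For this I would multiply by the polynomial terms in $v$. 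This yields $u$ with small $L^2$ norm, hence a small $C^0$ norm on compacts by the mean value inequality. Then $\exp(u)$ applied to the glued map is holomorphic, close to $s$, and agrees with $s$ to order $2$.

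\textbf{Step 3: Induction and domination.} Repeat over an exhaustion $c\to\infty$ with summable errors. The limit is a global holomorphic $\sigma\colon E\to X$ whose $2$-jet along $Z$ equals that of $s$. In particular $\sigma(0_x)=x$ and $d\sigma_{0_x}(E_x)=ds_{0_x}(E_x)=T_{X,x}$, so $\sigma$ is dominating and $X$ is elliptic.
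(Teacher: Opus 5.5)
Your proposal has a genuine gap in Step~1, and Step~2 cannot repair it.

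\textbf{The Oka step assumes the main difficulty.} The Oka principle with approximation and jet interpolation that you invoke holds for Stein sources. Neither the total space $E$ nor the pieces $E|_{\{\psi<c\}}$ are Stein in general: they contain the zero section over $\{\psi<c\}$. Hence any positive-dimensional compact subvariety of $X$ rules Steinness out, for instance $X$ itself when $X$ is projective. There is no Oka principle for weakly pseudoconvex sources that you can cite. The $1$-convex version of Prezelj and Stopar, used by Forstneri\v{c} and L\'arusson in the projective case, needs a compact exceptional set, whereas here the zero section may be noncompact. Removing a divisor to obtain Stein pieces throws away the part of $Z$ over the divisor, and the maps obtained there do not extend across it. So producing holomorphic approximants $\tilde s$ on large disc bundles, with or without interpolation on $Z$, is not a preliminary step but the entire difficulty. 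In the paper it is the jet extension Theorem~\ref{thm:oka}. The statement then follows in a few lines:
\begin{itemize}
\item check \eqref{eq:hyp} with $\omega_X=\sqrt{-1}\Theta_{h_B}$ and $c=k+N$;
\item extend $s$ continuously by radial truncation, as you do;
\item apply Lemma~\ref{lem:fibresubmersive}, that is, the map $(\xi,\Lambda)\mapsto s(\xi+\Lambda\xi)$, to get a product spray with core $s$ that is dominating off the zero section.
\end{itemize}

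\textbf{The gluing step fails as written.} The Riemannian exponential map of $X$ is not holomorphic, and the gluing problem is nonlinear. A single solution $u$ of a linear $\bar\partial$-equation removes only the first-order part of the error, so $\exp(u)$ applied to the glued map is not holomorphic. Moreover, $L^2$ estimates for sections of $\tilde s^*T_X$ would need a curvature lower bound for $\tilde s^*T_X$. You have no control over this for an arbitrary map $\tilde s$, and $p^*T_X$ is not close to $\tilde s^*T_X$ away from $Z$. Your weight $(\sum_j|v_j|^2)^{-m}$ is also degenerate in the radial fibre direction; the paper uses $q=|\xi|^2e^{|\xi|^2}$, for which $\ddc\log q\ge\omega_E$.

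\textbf{How the paper proceeds instead.}
\begin{itemize}
\item It exhausts $E$ by the disc bundles $\{\theta_w<1\}$. Their defining functions have no critical points off the zero section, so one only attaches convex bumps lying in charts away from $A$ (Lemma~\ref{lem:chain}).
\item It applies the Oka property only on these convex, hence Stein, pieces.
\item It glues the old and new maps through dominating product sprays. This reduces the problem to splitting a transition map $\gamma=\beta\circ\alpha^{-1}$ between parameter balls in $\mathbb C^k$.
\item The splitting is done by a Newton iteration (Lemma~\ref{lem:nonlin}) built from additive splittings of $\mathbb C^k$-valued functions (Lemma~\ref{lem:linsplit}). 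These are solved by $\bar\partial$ on the weakly pseudoconvex sets $D_\lambda$, with weight $q^{-m}$ and $m\ge\max\{c+1,N+r-1\}$.
\end{itemize}
The non-integrability of $q^{-m}$ along $A$ forces $\alpha$ to be the identity to order $r$ there. This preserves the jet condition at every bump, rather than through a single correction at the end.
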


Here two holomorphic maps \emph{agree to order $r$} along a
submanifold $A$ if, in local coordinates on the target, all
their partial derivatives of order $<r$ agree at the points
of $A$.
Theorem~\ref{thm:local-to-global-spray} follows from a jet
extension theorem for maps from the total space of a negative
vector bundle into an Oka manifold, with interpolation along
the zero section (see Theorem~\ref{thm:oka}).

In the projective case, Forstneri\v{c} and L\'arusson used
Stopar's Oka principle on $1$-convex domains
\cite[Theorem~3.3]{Stopar2013IJM} 
(cf.~\cite{Prezelj2010}).
In our setting, the zero section may be noncompact, so $E$
need not be $1$-convex and these results do not apply in general.
The domination off the zero section required in
Theorem~\ref{thm:oka} is provided by
Lemma~\ref{lem:fibresubmersive}, the analogue of
\cite[Lemma~2.2]{Forstneric2026MRL}.
The proof of Theorem~\ref{thm:oka} follows the local-to-global
scheme of the Oka principle for Stein sources \cite[Chap.~5]{Forstneric2017book},
with a gluing argument based on weighted $L^2$ estimates for
the $\bar\partial$-equation that preserves the prescribed jets
along the zero section.
Here we treat the jet extension theorem only in the form
needed to globalize sprays.
The method also extends to maps from general weakly pseudoconvex
manifolds into Oka manifolds; we will address this elsewhere.

\emph{Organization.}
Section~\ref{sec:local-spray} treats local sprays:
Section~\ref{subsec:criterion} establishes the cohomological
criterion, and Section~\ref{subsec:applications} proves
the nonexistence results above.
Section~\ref{sec:globalization} is devoted to globalization:
Section~\ref{subsec:heftung} proves a splitting lemma on the
weakly pseudoconvex total space of a negative bundle, and
Section~\ref{sec:total} proves Theorem~\ref{thm:oka}.
Section~\ref{sec:proofmain} proves Theorem~\ref{thm:weakly} and
Theorems~\ref{thm-main} and~\ref{thm:local-to-global-spray}.

\section{Existence of local dominating sprays}
\label{sec:local-spray}

\subsection{Quadratic \texorpdfstring{$\varphi$}{phi}-vector fields and a cohomological integration criterion}
\label{subsec:criterion}

In this subsection, for a given bundle morphism $\varphi\colon E\to T_X$, 
we introduce quadratic $\varphi$-vector fields
and show, by considering their flows, that $\varphi\colon E\to T_X$ can be integrated to local sprays,
called exponential $\varphi$-sprays
(see Proposition~\ref{prop-integral}).
We also give a cohomological criterion
for the existence of quadratic $\varphi$-vector fields (see Theorem~\ref{thm:criterion}).

Throughout this subsection, we work in the following setup.

\begin{setup}\label{setup}
Let $X$ be a complex manifold of dimension $n$,
let $p\colon E\to X$ be a vector bundle of rank $r$,
and let $\varphi\colon E\to T_X$ be a bundle morphism,
not necessarily surjective.
We write $0_x$ for the zero vector in $E_x$ and $0_X\subset E$ for the image of the zero section.
For $t\in\mathbb C$, let $m_t\colon E\to E$ denote fibrewise multiplication by $t$.
\end{setup}

We first define quadratic $\varphi$-vector fields.

\begin{definition}\label{def:q-spray}
A \emph{quadratic $\varphi$-vector field}
is a vector field $Z$
on the total space of $E$ satisfying the following conditions:
\begin{itemize}
\item[$\bullet$] $dp_v(Z(v))=\varphi_x(v)
\quad (x\in X,\ v\in E_x).$

\item[$\bullet$] $Z(tv)=t\,(dm_t)_vZ(v)
\quad (v\in E,\ t\in\mathbb C).$
\end{itemize}
Here $(dm_t)_v\colon T_vE\to T_{tv}E$ denotes the differential of $m_t$ at $v\in E$.
\end{definition}

\begin{remark}\label{rem:APS}
The definition is modelled on sprays in differential geometry
in the sense of Ambrose, Palais and Singer \cite{AmbrosePalaisSinger1960}.
Indeed, for $E=T_X$ and $\varphi=\id_{T_X}$, the first condition says
that $Z$ is a second-order vector field, and the second one is the
homogeneity condition in \cite{AmbrosePalaisSinger1960}; thus a quadratic
$\id_{T_X}$-vector field is a spray  on $X$ in their sense.
The exponential $\varphi$-spray in Proposition~\ref{prop-integral}
below corresponds to the exponential map of such a spray.
\end{remark}

We show that a quadratic $\varphi$-vector field on $E$
integrates $\varphi\colon E\to T_X$ to a local spray.

\begin{proposition}[Integration of quadratic $\varphi$-vector fields]
\label{prop-integral}
Every quadratic $\varphi$-vector field $Z$
on $E$ is integrable; that is, there exist an open neighbourhood $U\subset E$ of $0_X$ and
a holomorphic map $s\colon U\to X$ such that
\[
s(0_x)=x,
\qquad
\left.\frac{d}{d\lambda}\right|_{\lambda=0}s(\lambda v)
=\varphi_x(v)
\qquad(x\in X,\ v\in E_x).
\]
More precisely, such a map $s \colon U \to X$ is given by
$s=p\circ\operatorname{Fl}^Z_1$, where $\operatorname{Fl}^Z_t$ denotes the flow of $Z$;
we call it the \emph{exponential $\varphi$-spray} of $Z$.
In particular, if $\varphi$ is surjective, then $s$ is a local dominating spray.
\end{proposition}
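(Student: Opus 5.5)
Since $Z$ vanishes along the zero section and the flow is homogeneous, I will prove the proposition by a rescaling argument, the standard one for exponential maps of sprays. First, from the homogeneity condition with $t=0$: $m_0$ maps $E$ to $0_X$, so $Z(0_x)=0\cdot(dm_0)_vZ(v)=0$. Hence $Z$ vanishes along $0_X$, and every point of $0_X$ is a stationary point of the flow. By continuous dependence of solutions of holomorphic ODEs on initial data, the flow domain $\mathcal D\subset\mathbb C\times E$ (complex time; it suffices to use real time $t\in[0,1]$ and holomorphy in the initial point) is open and contains $[0,1]\times 0_X$. Consequently there is an open neighbourhood $U$ of $0_X$ with $[0,1]\times U\subset\mathcal D$. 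Indeed, for each $x$, a neighbourhood $V_x$ of $0_x$ with $[0,1]\times V_x\subset\mathcal D$ exists by compactness of $[0,1]$ and openness of $\mathcal D$, and $U=\bigcup V_x$. I will shrink $U$ to be fibrewise star-shaped, i.e.\ $m_\lambda(U)\subset U$ for $|\lambda|\le1$, by replacing $U$ with $\{v:\ m_\lambda v\in U\ \forall |\lambda|\le1\}$, which is still an open neighbourhood of $0_X$ since $m$ is continuous and $\{|\lambda|\le1\}$ is compact. Then $s=p\circ\mathrm{Fl}^Z_1$ is holomorphic on $U$ and $s(0_x)=p(0_x)=x$.

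Next I will prove the key rescaling identity $\mathrm{Fl}^Z_t(\lambda v)=m_\lambda\,\mathrm{Fl}^Z_{\lambda t}(v)$ wherever the right-hand side is defined. The curve $c(t)=m_\lambda\mathrm{Fl}^Z_{\lambda t}(v)$ satisfies
\[
c'(t)=\lambda\,(dm_\lambda)\,Z(\mathrm{Fl}^Z_{\lambda t}v)=Z(m_\lambda\mathrm{Fl}^Z_{\lambda t}v)=Z(c(t)),
\]
by the second defining condition with $t=\lambda$, and $c(0)=\lambda v$. Uniqueness of integral curves gives the identity, including the case $\lambda=0$ trivially. It yields $s(\lambda v)=p(\mathrm{Fl}^Z_1(\lambda v))=p(\mathrm{Fl}^Z_\lambda(v))$, since $p\circ m_\lambda=p$, valid for $v\in U$ and $|\lambda|\le 1$. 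One has to use a complex-time flow here, because $\lambda$ is complex. Since $Z$ is holomorphic, its complex-time local flow exists and is holomorphic in $(t,v)$; for small $|\lambda|$ the identity holds near $\lambda=0$ in any case.

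Finally I differentiate at $\lambda=0$:
\[
\frac{d}{d\lambda}\Big|_{0}s(\lambda v)=dp\bigl(Z(\mathrm{Fl}^Z_0 v)\bigr)=dp_v(Z(v))=\varphi_x(v)
\]
by the first defining condition. Note that $v$ itself need not lie in $U$: by linearity in $v$ of both sides and homogeneity one reduces to small $v$, or applies the identity $\mathrm{Fl}^Z_1(\lambda v)=m_\lambda\mathrm{Fl}^Z_\lambda(v)$ for small complex time $\lambda$, which is defined for any $v$. Thus $\partial_\lambda s(\lambda v)|_0=\varphi_x(v)$ for all $v\in E_x$. Since the fibre derivative $(ds)_{0_x}|_{E_x}$ equals $\varphi_x$, surjectivity of $\varphi$ makes $s$ a local dominating spray.

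The main technical point is justifying the existence of a neighbourhood $U$ on which the time-one flow is defined, together with the complex-time rescaling identity. Both reduce to standard ODE facts, namely openness of the flow domain and uniqueness of solutions, combined with $Z|_{0_X}=0$.
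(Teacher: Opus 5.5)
Your proof is correct. It differs from the paper's in how the fibre derivative is computed.

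The construction of $U$ is essentially the paper's. The paper composes $\operatorname{Fl}^Z_{1/N}$ near each fixed point $0_x$, while you use openness of the flow domain and compactness of $[0,1]$. Your derivation of $Z|_{0_X}=0$ from homogeneity with $t=0$ is cleaner than the paper's appeal to the local form.

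For the derivative, the paper works in a local frame and writes $Z=(\varphi(z)w,-\gamma(z)(w,w))$. It then differentiates the flow equations with respect to the initial datum $\lambda w$ along the constant solution, which gives $\partial_\lambda w=w$ and $\partial_\lambda z=t\varphi_x(v)$. This uses only real-time flows and the coordinate description from Step~1, which is reused in Theorem~\ref{thm:criterion}.

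You instead use homogeneity for complex scalars in the form $(m_\lambda)_*(\lambda Z)=Z$, obtaining $\operatorname{Fl}^Z_1(\lambda v)=m_\lambda\operatorname{Fl}^{\lambda Z}_1(v)$. This turns the $\lambda$-derivative into a time derivative, so $dp_v(Z(v))=\varphi_x(v)$ follows directly from the first defining condition. It is the intrinsic argument behind $\exp(\lambda v)=\gamma_v(\lambda)$ for geodesic sprays. It also gives the stronger statement that $\lambda\mapsto s(\lambda v)$ is the projection of the complex integral curve of $Z$ through $v$.

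The one point to state cleanly is what $\operatorname{Fl}^Z_{\lambda t}$ means for complex $\lambda$. Read it as the real-time flow of the holomorphic field $\lambda Z$ along the ray; this avoids any issue with multivalued complex-time flows. Since $m_\lambda$ conjugates $\lambda Z$ to $Z$ for $\lambda\ne0$, the two flows are defined simultaneously. Hence your identity holds on all of $U\times\{|\lambda|\le1\}$ for balanced $U$, and in any case only small $|\lambda|$ is needed.
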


\begin{proof}

\smallskip
\noindent\emph{Step 1. Local expression of quadratic $\varphi$-vector fields.}
We first express the defining conditions of quadratic $\varphi$-vector fields in local coordinates.
Take an open set $V\subset X$ and a frame $e=(e_{1},\ldots,e_{r})$ of $E|_{V}$, regarded as a row vector.
This frame defines the trivialization
\[
V\times\mathbb C^r\longrightarrow E|_{V},
\qquad (x,w)\longmapsto e(x)w
\]
and the natural isomorphism 
$$
T_{e(x)w}E \cong T_xX\oplus\mathbb C^r. 
$$
With respect to this decomposition, the restriction of $Z$ to $E|_{V}$ can be written as
\[
Z(x,w)=\bigl(A(x,w),B(x,w)\bigr),
\]
where $A(x,w)\in T_xX$ is its base component and
$B(x,w)\in\mathbb C^r$ is its fibre component.

Since $p(x,w)=x$ in this trivialization, we have $dp_{(x,w)}(c,d)=c$.
Thus, for $v=e(x)w$, the first condition $dp_v(Z(v))=\varphi_x(v)$
is equivalent to
\begin{align}\label{eq-1}
A(x,w)=\varphi_x(e(x)w).
\end{align}
In particular, the base component is uniquely determined by
$\varphi$ and is linear in $w$.

The fibrewise multiplication $m_t$ is given by $m_t(x,w)=(x,tw)$,
so its differential is
$(dm_t)_{(x,w)}(c,d)=(c,td)$.
Consequently, we obtain 
\[
t\,(dm_t)_{(x,w)}Z(x,w)
=\bigl(tA(x,w),t^2B(x,w)\bigr).
\]
Thus, the second condition $Z(tv)=t\,(dm_t)_vZ(v)$ is equivalent to
\[
A(x,tw)=tA(x,w) \text{ and }
B(x,tw)=t^2B(x,w).
\]
The first equality already follows from the expression \eqref{eq-1} for $A$.
The second requires the fibre component to be homogeneous of degree
two: one factor of $t$ comes from the differential $dm_t$, and the
other from the scalar multiplying this differential.
This homogeneity is equivalent to the
existence of a symmetric bilinear map
$\gamma(x)\colon\mathbb C^r\times\mathbb C^r\to\mathbb C^r$,
depending holomorphically on $x$, such that
\begin{align}\label{eq-2}
B(x,w)=-\gamma(x)(w,w).
\end{align}
The minus sign is a convention chosen for the subsequent calculations.

\smallskip
\noindent\emph{Step 2. Construction of the local spray by the flow.}
Consider the local flow $\operatorname{Fl}^Z_t$ of $Z$,
characterized by
\[
\frac{d}{dt}\operatorname{Fl}^Z_t(v)
=
Z\bigl(\operatorname{Fl}^Z_t(v)\bigr),
\qquad
\operatorname{Fl}^Z_0(v)=v.
\]
The local existence of the flow and its holomorphic dependence
on the initial point $v$ follow from the theory of holomorphic
ordinary differential equations.

We show that the flow is defined for $0\le t\le1$
on a sufficiently small neighbourhood of the zero section.
Fix $x\in X$.
There exist an open neighbourhood $W$ of $0_x$ and $\varepsilon>0$
such that the local flow is defined for initial points in $W$ and $|t|<\varepsilon$.
The local expressions \eqref{eq-1} and \eqref{eq-2} show that
$Z(0_x)=0$ for every $x\in X$.
Hence the solution starting at $0_x$ is constant.
Thus, for an integer $N\in\mathbb Z_+$ with $1/N<\varepsilon$,
the holomorphic map $\operatorname{Fl}^Z_{1/N}$ fixes $0_x$.
After shrinking the neighbourhood of $0_x$, its $N$-fold composition
is well defined.
By uniqueness of solutions, this composition is the time-one map
$\operatorname{Fl}^Z_1$.
Applying this construction at every point of the zero section and
using uniqueness on overlaps gives a holomorphic map
\[
\operatorname{Fl}^Z_1\colon U\longrightarrow E
\]
on an open neighbourhood $U$ of $0_X$.
Since the neighbourhood is allowed to depend on $x$.

We show that the holomorphic map
$$s:=p\circ\operatorname{Fl}^Z_1\colon U\to X$$
is a local spray whose fibre derivative coincides with $\varphi$.
Since the flow fixes the zero section, we have $s(0_x)=x$.
It remains to compute its fibre derivative.
Fix $x\in V$ and $v=e(x)w\in E_x$.
For $\lambda\in\mathbb C$ sufficiently close to zero, the orbit 
starting at $\lambda v$ remains in this coordinate neighbourhood for
$0\le t\le1$. We write
\[
\operatorname{Fl}^Z_t(\lambda v)
=
\bigl(z(t,\lambda),w(t,\lambda)\bigr).
\]
By \eqref{eq-1} and \eqref{eq-2}, the flow equations are
\begin{equation}\label{eq-3}
\begin{aligned}
\frac{\partial z}{\partial t}
&=\varphi(z(t,\lambda))w(t,\lambda),\\
\frac{\partial w}{\partial t}
&=-\gamma(z(t,\lambda))
  \bigl(w(t,\lambda),w(t,\lambda)\bigr),
\end{aligned}
\end{equation}
with initial conditions
\[
z(0,\lambda)=z(x),
\qquad
w(0,\lambda)=\lambda w.
\]
Here $\varphi(z)$ denotes the matrix of $\varphi$ with respect to the frame $e$
and the base coordinates $z$, and $\gamma(z)$ denotes the corresponding
local coefficient expression of the quadratic term.

Differentiate the second equation in
\eqref{eq-3} with respect to $\lambda$ and set
$\lambda=0$.
The terms obtained by differentiating the coefficients of $\gamma$ contain two factors
$w(t,0)$, while those obtained by differentiating either argument of $\gamma$ contain
one such factor.
All these terms vanish, so we obtain 
\[
\frac{d}{dt}
\left(
\left.\frac{\partial w(t,\lambda)}{\partial\lambda}\right|_{\lambda=0}
\right)=0.
\]
Using the initial condition, we obtain
\[
\left.\frac{\partial w(t,\lambda)}{\partial\lambda}\right|_{\lambda=0}
=w
\qquad(0\le t\le1).
\]

Next, differentiate the first equation in
\eqref{eq-3} with respect to $\lambda$.
The term obtained by differentiating the coefficients of $\varphi$
is multiplied by $w(t,0)=0$ and therefore vanishes.
The remaining term is $\varphi(z(x))w$, the coordinate
expression of $\varphi_x(v)$.
Thus, identifying tangent vectors with their coordinate expressions, we obtain 
\[
\frac{d}{dt}
\left(
\left.\frac{\partial z(t,\lambda)}{\partial\lambda}\right|_{\lambda=0}
\right)
=\varphi_x(v).
\]
Since $z(0,\lambda)$ is independent of $\lambda$, integration gives
\[
\left.\frac{\partial z(t,\lambda)}{\partial\lambda}\right|_{\lambda=0}
=t\,\varphi_x(v).
\]
Evaluating at $t=1$ yields
\[
\begin{aligned}
\left.\frac{d}{d\lambda}\right|_{\lambda=0}s(\lambda v)
=(dp)_{0_x}
  \bigl((d\operatorname{Fl}^Z_1)_{0_x}(0,v)\bigr)
=(dp)_{0_x}\bigl(\varphi_x(v),v\bigr)
=\varphi_x(v)
\end{aligned}
\]
as required.
\end{proof}

We now define the cohomology class appearing in the criterion.

\begin{definition}\label{def:atiyah}
The \emph{symmetrized $\varphi$-Atiyah class} of $E$ is defined by
\[
\alpha_\varphi(E)
:=\operatorname{Sym}\bigl(\varphi^*a(E)\bigr)
\in H^1(X,S^2E^\vee\otimes E),
\]
where $a(E)\in H^1(X,\Omega_X^1\otimes\operatorname{End}E)$
is the Atiyah class of $E$ and $\varphi^*$ denotes the map on cohomology induced by
$\varphi^*\colon\Omega_X^1\to E^\vee$, so that
\[
\varphi^*a(E)
\in H^1(X,E^\vee\otimes\operatorname{End}E)
\cong H^1(X,E^\vee\otimes E^\vee\otimes E).
\]
The morphism $\operatorname{Sym}$ symmetrizes the two factors $E^\vee$:
for an $E$-valued bilinear form $B$, we set
\[
\operatorname{Sym}B(u,v)
:=\frac12\bigl(B(u,v)+B(v,u)\bigr).
\]
\end{definition}

\begin{theorem}[Cohomological criterion for quadratic $\varphi$-vector fields]
\label{thm:criterion}
The following conditions are equivalent:
\begin{enumerate}
\item[$(1)$] The bundle $E$ admits a quadratic $\varphi$-vector field.
\item[$(2)$] The symmetrized $\varphi$-Atiyah class
$\alpha_\varphi(E)$ vanishes.
\end{enumerate}
\end{theorem}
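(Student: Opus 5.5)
Locally on a trivializing open set $V$ with frame $e$, Step~1 of the proof of Proposition~\ref{prop-integral} shows that a quadratic $\varphi$-vector field is the same thing as a holomorphic field of symmetric bilinear maps $\gamma_V\in\Gamma(V,S^2E^\vee\otimes E)$ (expressed in the frame $e$), with $Z=(\varphi(e w),-\gamma_V(w,w))$. Such local fields always exist, for instance $\gamma_V=0$. So the plan is to determine how the fibre component transforms under a change of frame. The expected outcome is this: the differences of the local choices on overlaps form a Čech cocycle representing $\alpha_\varphi(E)$, up to sign and the factor conventions. Existence of a global $Z$ is then exactly solvability of the coboundary equation.

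Concretely, take a cover $\{V_i\}$ with frames $e_i$ and transition matrices $g_{ij}$, so that $e_j=e_ig_{ij}$ on $V_{ij}$, and a point written $e_jw_j=e_iw_i$ has $w_i=g_{ij}w_j$. The key step is the transformation rule for the fibre component $B$ of a vector field $Z=(A,B)$ satisfying $A=\varphi(e w)$. The change of trivialization $(x,w_j)\mapsto(x,g_{ij}(x)w_j)$ has differential $(c,d)\mapsto(c,\,g_{ij}d+(dg_{ij}(c))w_j)$. Hence
\[
B_i(x,w_i)=g_{ij}B_j(x,w_j)+\bigl(dg_{ij}(\varphi(e_jw_j))\bigr)w_j .
\]
Writing $B_k=-\gamma_k(w,w)$ and converting to the frame $e_j$ via $g_{ij}$, this gives the condition
\[
\gamma_j(w,w)-g_{ij}^{-1}\gamma_i(g_{ij}w,g_{ij}w)=-\,g_{ij}^{-1}\,dg_{ij}(\varphi(e_jw))\,w .
\]
The right-hand side is the quadratic form associated with the bilinear map $(u,v)\mapsto -\bigl(g_{ij}^{-1}dg_{ij}\bigr)(\varphi(u))\,v$. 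Now $\{g_{ij}^{-1}dg_{ij}\}$ (up to sign) is the standard Čech representative of $a(E)$ in $H^1(X,\Omega^1_X\otimes\operatorname{End}E)$, namely the differences of the local flat connections $d$ in the frames $e_i$. Therefore the right-hand side is the quadratic form of a cocycle representing $\varphi^*a(E)$.

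A quadratic form determines only the symmetric part of a bilinear map. So the condition says precisely that the cocycle $\operatorname{Sym}\varphi^*(\text{Atiyah cocycle})_{ij}$ equals $\gamma_j-\gamma_i$ (both regarded as sections of $S^2E^\vee\otimes E$ on $V_{ij}$). Both implications then follow. For $(1)\Rightarrow(2)$, a global $Z$ gives local $\gamma_i$ exhibiting the cocycle as a coboundary. For $(2)\Rightarrow(1)$, vanishing in $H^1(X,S^2E^\vee\otimes E)$ on a suitable (e.g.\ Leray, or after refinement) cover gives $\gamma_i$ solving the equation. The local fields $Z_i$ then agree on overlaps and glue to a global field; homogeneity is automatic because each $\gamma_i$ is quadratic.

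The main obstacle is bookkeeping rather than conceptual. One must verify carefully the sign and convention matching with the paper's definition of $a(E)$ (cocycle $\{g_{ij}^{-1}dg_{ij}\}$ versus $\{dg_{ij}\,g_{ij}^{-1}\}$, acting in which frame). One must also check that the identification $E^\vee\otimes\operatorname{End}E\cong E^\vee\otimes E^\vee\otimes E$ places $\varphi$ in the slot matching the argument $\varphi(u)$. The factor $\tfrac12$ in $\operatorname{Sym}$ is irrelevant since only the vanishing matters. For the Čech-versus-sheaf cohomology issue, I would pass to the direct limit over covers, which is standard for $H^1$, so that vanishing of the class yields a coboundary on some refinement, where the gluing is done.
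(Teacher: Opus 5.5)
Your proposal is correct and is essentially the paper's proof. The paper obtains the same gluing condition $\Gamma_i-\Gamma_j=C_{ij}$ by applying $Z_i$ to the fibre coordinates $w_j$ via the Leibniz rule, which is equivalent to your pushforward under the change of trivialization, and then concludes by polarization and a cover by Stein open sets. One harmless slip, since only vanishing matters: with your conventions the right-hand side of your displayed condition is $+g_{ij}^{-1}\,dg_{ij}(\varphi(e_jw))\,w$, not its negative.
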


\begin{proof}
Every local trivialization of $E$ yields a local quadratic $\varphi$-vector field.
We show that local quadratic $\varphi$-vector fields can be chosen so that they glue together
precisely when a cocycle representing $\alpha_\varphi(E)$ is a coboundary.

\smallskip
\noindent\emph{Step 1. A cocycle representing $\alpha_\varphi(E)$.}
Let $\{V_i\}$ be a locally finite open cover of $X$ by Stein open sets,
and let $e_i:=(e_{i,1},\ldots,e_{i,r})$ be a frame of
$E|_{V_i}$.
Let $\{g_{ij}\}$ denote the transition matrices of $E$, defined by $e_i=e_jg_{ij}$.
The fibre coordinates $w_i\colon E|_{V_i}\to\mathbb C^r$ are the
column vectors defined by
\[
v=e_i(x) \cdot w_i(v)\qquad(v\in E_x,\ x\in V_i).
\]
Then
\begin{equation}\label{eq:quadratic-fibre-coordinates}
w_j=(g_{ij}\circ p)w_i \quad \text{ on } E|_{V_i\cap V_j}.
\end{equation}

Let $\nabla^i$ be the holomorphic connection on $E|_{V_i}$ that differentiates
the coefficients of a section in the frame $e_i$:
\[
\nabla^i(e_i u_i)=e_i\,du_i.
\]
On $V_{ij}:=V_i\cap V_j$, the Leibniz rule shows that
$A_{ij}:=\nabla^j-\nabla^i$ is $\mathcal O_X$-linear, so
$A_{ij}\in H^0(V_{ij},\Omega_X^1\otimes\operatorname{End}(E))$.
With our sign convention, the cocycle $\{A_{ij}\}$ represents the Atiyah class $a(E)$.
For a section $u=e_i u_i=e_j u_j$, we have $u_j=g_{ij}u_i$.
Consequently,
\[
\nabla^j u
=e_j\,d(g_{ij}u_i)
=e_i\bigl(du_i+g_{ij}^{-1}dg_{ij}\,u_i\bigr).
\]
Subtracting $\nabla^i u=e_i\,du_i$, we obtain
\begin{equation}\label{eq:quadratic-atiyah-cocycle}
A_{ij} = g_{ij}^{-1}dg_{ij},
\end{equation}
where we identify $A_{ij}$ with its matrix of holomorphic $1$-forms in the frame $e_i$.

Pulling back by $\varphi$ and symmetrizing, we obtain the cocycle
\[
C_{ij}(u,v)
:=\frac12\bigl(
A_{ij}(\varphi(u))v+A_{ij}(\varphi(v))u
\bigr)
\qquad (u,v\in E_x,\ x\in V_{ij}). 
\]
By definition, the symmetrized $\varphi$-Atiyah class
$\alpha_\varphi(E)\in H^1(X,S^2E^\vee\otimes E)$ is represented by this cocycle.
Here $A_{ij}(\varphi(u))$ denotes the endomorphism of $E_x$ obtained by evaluating
the $1$-form factor of $A_{ij}$ on $\varphi(u)$.
In particular,
$
C_{ij}(v,v)=A_{ij}(\varphi(v))v
$.

\smallskip
\noindent\emph{Step 2. A gluing condition for quadratic $\varphi$-vector fields.}
Given a section $\Gamma_i\in H^0(V_i,S^2E^\vee\otimes E)$,
we define a symmetric bilinear map
$\gamma_i(x)\colon\mathbb C^r\times\mathbb C^r\to\mathbb C^r$
by 
\[
\Gamma_{i,x}(e_i(x)a,e_i(x)b)
=e_i(x)\gamma_i(x)(a,b).
\]
Using $\gamma_i$,
we then define a local quadratic $\varphi$-vector field $Z_i$ on $E|_{V_i}$
by \eqref{eq-1} and \eqref{eq-2}.

We show that the vector fields $Z_i$ glue together to a global vector field on $E$
if and only if the sections $\Gamma_i$ satisfy
\begin{equation}\label{eq:quadratic-coboundary}
\Gamma_i-\Gamma_j=C_{ij}
\end{equation}
on every overlap.

To verify this, we compare the actions of $Z_i$ and $Z_j$ on holomorphic functions.
By \eqref{eq-1}, $Z_i$ and $Z_j$ agree on functions pulled back from $X$.
It remains to compare their actions on the components of $w_j$.
Fix $x\in V_i\cap V_j$ and $v\in E_x$.
Applying $Z_i$ to \eqref{eq:quadratic-fibre-coordinates} and using
the Leibniz rule, we obtain
\begin{equation}\label{eq:quadratic-transition-action}
\begin{aligned}
Z_i(w_j)(v)
&=Z_i(g_{ij}\circ p)(v)w_i(v) +g_{ij}(x)Z_i(w_i)(v)\\
&=(dg_{ij})_x(\varphi_x(v))w_i(v)
-g_{ij}(x)\gamma_i(x)(w_i(v) ,w_i(v) )
\end{aligned}
\end{equation}
by \eqref{eq-1} and \eqref{eq-2}. 
Here the vector field acts componentwise on the matrix $g_{ij}\circ p$
and on the coordinate maps $w_i,w_j$.
On the other hand, \eqref{eq-2} gives
$Z_j(w_j)(v)=-\gamma_j(x)(w_j(v),w_j(v))$.
Hence $Z_i=Z_j$ if and only if
\[
g_{ij}(x)\gamma_i(x)(w_i(v),w_i(v))-\gamma_j(x)(w_j(v),w_j(v))
=(dg_{ij})_x(\varphi_x(v))w_i(v)
\]
for every $v\in E_x$.
Multiplying this identity by the frame $e_j(x)$ turns it into
an identity in $E_x$,
whose left-hand side becomes
$\Gamma_{i,x}(v,v)-\Gamma_{j,x}(v,v)$ since $e_jg_{ij}=e_i$.
Thus we obtain
\[
\begin{aligned}
\Gamma_{i,x}(v,v)-\Gamma_{j,x}(v,v)
&=e_j(x)(dg_{ij})_x(\varphi_x(v))w_i(v) \\
&=e_i(x)g_{ij}(x)^{-1}(dg_{ij})_x(\varphi_x(v))w_i(v)\\
&=A_{ij,x}(\varphi_x(v))v\\
&=C_{ij,x}(v,v).
\end{aligned}
\]
Therefore, the vector fields $Z_{i}$ and $Z_{j}$ agree precisely when
\[
\Gamma_{i,x}(v,v)-\Gamma_{j,x}(v,v)=C_{ij,x}(v,v)
\qquad(v\in E_x).
\]
Since both sides are symmetric bilinear tensors, equality on all pairs
$(v,v)$ implies equality on all pairs by polarization.
This proves the gluing criterion.

\smallskip
\noindent\emph{Step 3. Conclusion.}
Suppose that $(2)$ holds.
Since $\{C_{ij}\}$ represents the zero class,
there exist sections $\Gamma_i$
such that $\Gamma_i-\Gamma_j=C_{ij}$.
Let $\gamma_i$ be their coefficient expressions in the frames $e_i$,
and define $Z_i$ by \eqref{eq-1} and \eqref{eq-2}.
By Step~2, these vector fields glue together to a vector field
$Z$ on $E$,
which is a quadratic $\varphi$-vector field by its local expression.
This proves $(1)$.

Conversely, suppose that $(1)$ holds, and let $Z$ be a quadratic
$\varphi$-vector field.
Write $Z$ locally as in \eqref{eq-1} and \eqref{eq-2}, and let
$\Gamma_i$ be the corresponding tensors.
These local expressions agree on overlaps since they come from
the same global vector field.
By Step~2, we have $\Gamma_i-\Gamma_j=C_{ij}$.
Hence $\{C_{ij}\}$ is a coboundary, so
$\alpha_\varphi(E)=0$, which proves $(2)$.
\end{proof}

\begin{corollary}\label{cor:criterion}
If $\varphi \colon E \to T_{X}$ is a surjective bundle morphism
and the symmetrized $\varphi$-Atiyah class
$\alpha_\varphi(E)$ vanishes, 
then there exists a local dominating spray $s \colon U \to X$.
\end{corollary}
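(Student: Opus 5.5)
The corollary follows by chaining Theorem~\ref{thm:criterion} with Proposition~\ref{prop-integral}. Since $\alpha_\varphi(E)=0$, the implication $(2)\Rightarrow(1)$ of Theorem~\ref{thm:criterion} gives a quadratic $\varphi$-vector field $Z$ on the total space of $E$. Concretely, we choose sections $\Gamma_i\in H^0(V_i,S^2E^\vee\otimes E)$ with $\Gamma_i-\Gamma_j=C_{ij}$, define the local fields $Z_i$ by \eqref{eq-1} and \eqref{eq-2}, and glue them via Step~2 of that proof.

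We then apply Proposition~\ref{prop-integral} to $Z$. It provides an open neighbourhood $U$ of $0_X$ on which the time-one flow $\operatorname{Fl}^Z_1$ is defined, and the map $s:=p\circ\operatorname{Fl}^Z_1\colon U\to X$ satisfies $s(0_x)=x$ and $\frac{d}{d\lambda}\big|_{\lambda=0}s(\lambda v)=\varphi_x(v)$ for all $v\in E_x$.

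It remains to check domination. The tangent space $T_{0_x}E$ splits as $T_{0_x}(0_X)\oplus E_x$, with $E_x$ the vertical directions. Along this decomposition, the restriction of $(ds)_{0_x}$ to $E_x$ is exactly the fibre derivative computed above, namely $\varphi_x$. Since $\varphi$ is surjective, $(ds)_{0_x}(E_x)=\varphi_x(E_x)=T_{X,x}$ for every $x\in X$. Hence $s$ is a dominating spray defined on the neighbourhood $U$ of the zero section, that is, a local dominating spray.

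There is no real obstacle. The only points needing care, namely that $U$ is a genuine open neighbourhood of the whole zero section (possibly noncompact) and that $s$ is holomorphic, are already settled in Step~2 of Proposition~\ref{prop-integral}.
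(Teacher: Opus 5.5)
Your proposal is correct and follows the paper's proof: you obtain a quadratic $\varphi$-vector field from Theorem~\ref{thm:criterion}, take its exponential $\varphi$-spray from Proposition~\ref{prop-integral}, and use the surjectivity of $\varphi$ to get domination. Your explicit check that $(ds)_{0_x}$ restricted to the vertical subspace $E_x$ is $\varphi_x$, so that $(ds)_{0_x}(E_x)=T_{X,x}$, is exactly what the paper leaves implicit in ``since $\varphi$ is surjective, this spray is dominating.''
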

\begin{proof}
By Theorem~\ref{thm:criterion}, $E$ admits a quadratic
$\varphi$-vector field.
Its exponential $\varphi$-spray is a local spray
with fibre derivative $\varphi$ by Proposition~\ref{prop-integral}.
Since $\varphi$ is surjective, this spray is dominating.
\end{proof}

\begin{remark}\label{rem:quadratic-versus-local}
The vanishing of $\alpha_\varphi(E)$ is necessary and sufficient
for the existence of a quadratic $\varphi$-vector field, but it
is only a sufficient condition for the existence of a local spray with fibre derivative $\varphi$ (not necessarily an exponential $\varphi$-spray).

Indeed, let $X=\mathbb C/\Lambda$ be an elliptic curve, let $L$
be a line bundle on $X$ with $\deg L\ne0$, and set
\[
E:=T_X\oplus L,
\qquad
\varphi\colon E\longrightarrow T_X,
\qquad \varphi(u,\ell)=u.
\]
The translation trivialization $T_X\cong X\times\mathbb C$
defines a global dominating spray
\[
s\colon E\longrightarrow X,
\qquad s(x,u,\ell)=x+u\pmod{\Lambda},
\]
whose fibre derivative is $\varphi$.
We show that $\alpha_\varphi(E)\ne0$.

Choose a global translation-invariant frame of $T_X$ and local frames $e_i^L$ of $L$, with transition functions $h_{ij}$
defined by $e_i^L=e_j^Lh_{ij}$.
The corresponding transition matrices of $E$ and the cocycle
representing its Atiyah class are
\[
g_{ij}=
\begin{pmatrix}
1&0\\
0&h_{ij}
\end{pmatrix},
\qquad
A_{ij}=g_{ij}^{-1}dg_{ij}
=\begin{pmatrix}
0&0\\
0&A^L_{ij}
\end{pmatrix},
\qquad A^L_{ij}:=h_{ij}^{-1}dh_{ij}.
\]
Here $\{A^L_{ij}\}$ represents
$a(L)\in H^1(X,\Omega_X^1)$, where we use
$\operatorname{End}L\cong\mathcal O_X$.
In particular, for $\xi\in T_xX$ and $(u,\ell)\in E_x$,
\[
A_{ij,x}(\xi)(u,\ell)
=\bigl(0,A^L_{ij,x}(\xi)\ell\bigr).
\]
As in the proof of Theorem~\ref{thm:criterion}, the class
$\alpha_\varphi(E)$ is represented by
\[
C_{ij}(a,b)
=\frac12\bigl(
A_{ij}(\varphi(a))b+A_{ij}(\varphi(b))a
\bigr).
\]
For $u\in T_xX$ and $\ell\in L_x$, we have
$\varphi(u,0)=u$ and $\varphi(0,\ell)=0$.
Consequently,
\[
\begin{aligned}
C_{ij}((u,0),(0,\ell))
&=\frac12\bigl(
A_{ij}(u)(0,\ell)+A_{ij}(0)(u,0)
\bigr)\\
&=\left(0,\frac12 A^L_{ij}(u)\ell\right).
\end{aligned}
\]

To interpret this identity in cohomology, we use the direct sum
decomposition $E=T_X\oplus L$ to write
\[
S^2E^\vee
\cong S^2T_X^\vee
\oplus(T_X^\vee\otimes L^\vee)
\oplus(L^\vee)^{\otimes2}.
\]
Tensoring with $E=T_X\oplus L$, we obtain
\[
\begin{aligned}
S^2E^\vee\otimes E
\cong{}& (S^2T_X^\vee\otimes T_X)
\oplus(S^2T_X^\vee\otimes L)\\
&\oplus(T_X^\vee\otimes L^\vee\otimes T_X)
\oplus(T_X^\vee\otimes L^\vee\otimes L)\\
&\oplus((L^\vee)^{\otimes2}\otimes T_X)
\oplus((L^\vee)^{\otimes2}\otimes L).
\end{aligned}
\]
The fourth summand consists of the mixed terms
with values in $L$ and is canonically isomorphic to
$\Omega_X^1$ since $L^\vee\otimes L\cong\mathcal O_X$.
Under this identification, a $1$-form $\omega$ corresponds to
the bilinear map $(u,\ell)\mapsto\omega(u)\ell$.
The preceding calculation shows that the component of $C_{ij}$
in this summand is $\frac12 A^L_{ij}$.
Thus, in the induced direct sum decomposition of
$H^1(X,S^2E^\vee\otimes E)$, the component of
$\alpha_\varphi(E)$ in
\[
H^1(X,T_X^\vee\otimes L^\vee\otimes L)
\cong H^1(X,\Omega_X^1)
\]
is $(1/2) a(L)$.
Since $a(L)\ne0$ by $\deg L\ne0$, it follows that $\alpha_\varphi(E)\ne0$.
By Theorem~\ref{thm:criterion}, $E$ admits
no quadratic $\varphi$-vector field, and hence no exponential
$\varphi$-spray, although it admits the
global dominating spray defined above.
\end{remark}

\subsection{Examples admitting no local dominating spray}
\label{subsec:applications}

Theorem~\ref{thm:weakly} shows that every
weakly pseudoconvex manifold admitting a positive line bundle
has a local dominating spray.
However, a compact K\"ahler manifold
without a positive line bundle need not admit a local dominating spray. 
Thus the positivity hypothesis cannot be omitted in general.

\begin{example}[Compact K\"ahler manifolds without local dominating sprays]
\label{ex:torus-blowup-no-local-spray}
Let $\mathbb T$ be a complex torus of algebraic dimension zero, and let
$\pi\colon X:=\operatorname{Bl}_{t_0}(\mathbb T)\to\mathbb T$
be the blow-up at a point $t_0\in\mathbb T$.
Then $X$ admits no local dominating spray.
\end{example}

\begin{proof}
After a translation of $\mathbb T$, we may assume that $t_0=0$.
Write $\mathbb T=\mathbb C^n/\Lambda$; note that $n\geq2$,
since complex tori of dimension one are projective. Let
$\Exc(\pi):=\pi^{-1}(0)$ be the exceptional divisor.
Suppose that a vector bundle $p\colon E\to X$
carries a local spray $s\colon U\to X$ whose fibre derivative
$\varphi\colon E\to T_X$ is surjective.
Set $r:=\operatorname{rank}E\geq n$ and
$X^\circ:=X\setminus\Exc(\pi)$.

We outline the argument.
We consider a \textit{displacement} $\delta$ defined by the dominating spray $s$. 
The coefficients of the Taylor expansion of $\delta$ along the fibres
impose a rank bound along the fibres of an algebraic reduction of $\mathbf P(E)$ (see Step~4).
On the other hand, we construct a hypersurface $S$ whose image $s(S)$ is contained in $\Exc(\pi)$ (see Step~5).
On $S$, the displacement $\delta(v)$ depends only on the base point $p(v)$,
and its differential has rank $n$ in the same directions, which gives a contradiction (see Step~8).

\smallskip
\noindent\emph{Step 1. The displacement and its Taylor coefficients.}
Via the natural quotient map $\mathbb C^n\to\mathbb T=\mathbb C^n/\Lambda$,
we identify a small neighbourhood of the origin in $\mathbb T$
with a neighbourhood of the origin in $\mathbb C^n$,
and we regard the points of this neighbourhood as vectors.
Since $s(0_x)=x$, we may shrink $U$ to a fibrewise ball bundle
such that $\pi(s(v))-\pi(p(v))$ lies in this origin chart
for every $v\in U$. 
We define the displacement
\[
\delta(v):=\pi(s(v))-\pi(p(v))\in\mathbb C^n 
\qquad (v\in U \subset E),
\]
where the difference is expressed in the fixed origin coordinates.
Since $\delta$ vanishes on the zero section, its fibrewise
Taylor expansion is
\begin{equation}\label{eq:torus-blowup-taylor}
\delta(v)=\sum_{k\geq1}\delta_k(v),\qquad
\delta_k(v):=\frac{1}{k!}
\left.\frac{d^k}{dt^k}\right|_{t=0}\delta(tv).
\end{equation}
The derivatives are taken componentwise in $\mathbb C^n$.
For every $v\in E_x$, the formula defining $\delta_k(v)$
makes sense since $tv\in U$ for sufficiently small $t$.
Each $\delta_k$ is a homogeneous polynomial of degree $k$
on every fibre and corresponds to a global section
of $S^kE^\vee\otimes(X\times\mathbb C^n)$.
The series in \eqref{eq:torus-blowup-taylor} converges locally uniformly on $U$.

\smallskip
\noindent\emph{Step 2. A coordinate map near the exceptional divisor.}
Choose $\rho>0$ so small that $B_{2\rho}\subset\mathbb C^n$
lies in the origin chart fixed above, and regard $B_\rho$
as an open subset of $\mathbb T$.
Define
$$
Q_0\colon N_0:=\pi^{-1}(B_\rho) \xrightarrow{\quad \pi \quad } B_\rho \subset \mathbb C^n. 
$$ 
Then $Q_0$ vanishes on $\Exc(\pi)$ and has differential
of rank $n$ on $N_0\setminus\Exc(\pi)$.
For $x,y\in N_0$, the torus difference $\pi(y)-\pi(x)$
is represented in the origin chart by $Q_0(y)-Q_0(x)$.

\smallskip
\noindent\emph{Step 3. Fixing the algebraic parameter.}
Let $q\colon\mathbf P(E)\to X$ be the bundle of lines in $E$,
and let $\nu\colon E\setminus0_X\to\mathbf P(E)$ be the
natural projection, so that $q\circ\nu=p$.
Choose an algebraic reduction
$\alpha\colon\mathbf P(E)\dashrightarrow Z$, 
where $Z$ is a smooth projective variety.
Every meromorphic function on $\mathbf P(E)$ is the pullback
under $\alpha$ of a rational function on $Z$ (see \cite{Ueno1975LNM}).
The relevant maps are shown in the following diagram:
\[
\xymatrix@C=3em@R=2.5em{
  \mathbf P(E) \ar[d]_{q} \ar@{-->}[rr]_{\alpha} & & Z \\
  X \ar[d]_{\pi}
  & E \ar[l]^{p}
  & E\setminus0_X \ar@{_{(}->}[l] \ar[ull]^{\nu} \\
  \mathbb T
  & U \ar@{^{(}->}[u] \ar[ul]^{s}
  &
}
\]

We show that
$(\pi\circ q,\alpha)\colon\mathbf P(E)\dashrightarrow
\mathbb T\times Z$ is a dominant meromorphic map.
Let $G\subset\mathbb T\times Z$ be the closure of its image,
obtained from the closure of its graph.
It is an irreducible compact analytic subset surjecting onto
both factors.
Over a dense Zariski open subset $V\subset\mathbb T$,
the projection $G\to\mathbb T$ is flat, and its fibres
$G_t\subset Z$ have a common Hilbert polynomial $Q$.
They therefore define a meromorphic map
$\mathbb T\dashrightarrow \operatorname{Hilb}^{Q}(Z)$.
Since the target is projective and $a(\mathbb T)=0$,
this map is constant.
Thus $G_t=Z'$ for every $t\in V$, for some fixed closed
algebraic subset $Z'\subset Z$.
Since $G$ is irreducible, $G\cap(V\times Z)$ is dense in $G$.
Taking closures, we obtain
\[
G=\overline{G\cap(V\times Z)}
=\overline{V\times Z'}
=\mathbb T\times Z.
\]

Hence the differential of $(\pi\circ q,\alpha)$ is surjective at a general point of $\mathbf{P}(E)$.
Since $d\pi$ is an isomorphism on $X^\circ$, we obtain
\begin{equation}\label{eq:torus-blowup-dominance}
dq_{[\ell]}\bigl(\ker d\alpha_{[\ell]}\bigr)
=T_{q([\ell])}X
\end{equation}
at a general point $[\ell] \in \mathbf{P}(E)$.
Thus the base point in $X$ can move in all $n$ tangent directions
while the value of $\alpha$ remains fixed.

\smallskip
\noindent\emph{Step 4. The rank bound from the Taylor coefficients.}
For $v\in E_x$, differentiating the displacement at $0_x$
in the fibre direction gives
\[
\delta_1(v)
=\left.\frac{d}{dt}\right|_{t=0}
  \bigl(\pi(s(tv))-\pi(x)\bigr)
=d\pi_x\bigl(\varphi_x(v)\bigr).
\]
Here the last term is identified with a vector in $\mathbb C^n$
via translation from $\pi(x)$ to the origin and the fixed
origin coordinates.
Since $\varphi_x$ is surjective and $d\pi_x$ is an isomorphism
for $x\in X^\circ$, the linear map
$\delta_1|_{E_x}\colon E_x\to\mathbb C^n$ is surjective for $x \in X^\circ$.
Hence its first component $\gamma:=\delta_1^1$
is a nonzero section of $E^\vee$. 

Work on the dense open subset of $\mathbf P(E)$ where
$\alpha$ is a holomorphic submersion, the base point lies
in $X^\circ$, and $\gamma$ is nonzero.
For $k\geq1$, each component of $\delta_k/\gamma^k$
is homogeneous of degree zero and defines a meromorphic
function on $\mathbf P(E)$.
Each component can be written as the pullback under $\alpha$ of a rational function on $Z$.
Thus its differential annihilates $\ker d\alpha$, first where
the rational function is holomorphic and then on the whole
open subset by holomorphic continuation.

Pulling back by $\nu$ and differentiating the ratio, we obtain 
\[
\begin{aligned}
(d\delta_k)_\ell(\eta)
&=d\left[
\gamma^k\left(\frac{\delta_k}{\gamma^k}\circ\nu\right)
\right]_\ell(\eta)\\
&=k\frac{\delta_k(\ell)}{\gamma(\ell)}
\,d\gamma_\ell(\eta) 
\quad 
(\eta\in\ker d(\alpha\circ\nu)_\ell).
\end{aligned}
\]
Here we regard $\delta_{k}$ as a holomorphic map $E \to \mathbb{C}^{n}$. 
Thus all $d\delta_k$ vanish on
$\ker d\gamma\cap\ker d(\alpha\circ\nu)$.
Differentiating \eqref{eq:torus-blowup-taylor} term by term, 
we deduce that 
\[
\operatorname{rank}
\left( (d\delta)_\ell\big|_{\ker d(\alpha\circ\nu)_\ell}\right)
\leq1
\]
since $d\gamma$ is scalar-valued. 

\smallskip
\noindent\emph{Step 5. A hypersurface mapped into the exceptional divisor.}
Fix $x_0\in\Exc(\pi)$, choose local coordinates
$z=(z_1,\ldots,z_n)$ centred at $x_0$ with
$\Exc(\pi)=\{z_1=0\}$, and trivialize $E$ with fibre
coordinates $w=(w_1,\ldots,w_r)$.
By surjectivity of $\varphi_{x_0}$, we may assume that
$\partial s^1/\partial w_1(0,0)\neq0$.
Shrink the product neighbourhood in $E$ so that its image under $s$
lies in this coordinate chart in $N_0$ and this derivative is nonzero there.

For $b=(b_2,\ldots,b_r)$ near zero, the implicit function theorem shows that
the equation
\[
s^1\bigl(z,t(1,b_2,\ldots,b_r)\bigr)=0
\]
has a unique small solution $t=t(z,b)$, which depends holomorphically on $(z,b)$.
Since $s^1(z,0)=z_1$, the solution $t(z,b)$ vanishes exactly when
$z_1=0$.
For $z_1\neq0$, we consider the holomorphic map
\[
(z,b)\longmapsto
\bigl(z,t(z,b)(1,b_2,\ldots,b_r)\bigr) \in E
\]
and define $S$ to be its image.
Thus $S$ is a local hypersurface in $E$ satisfying
$s(S)\subset\Exc(\pi)$ and $p(S)\subset X^\circ$.
In the projective chart $w_1\neq0$, the restriction
$\nu|_S$ is given by $(z,b)\mapsto(z,b)$.
Hence $\nu(S)$ is open in $\mathbf P(E)$, and
$\nu|_S\colon S\to\nu(S)$ is biholomorphic.

The solutions $t(z,0)$ tend to zero as $z\to x_0$.
Thus we may choose a small simply connected neighbourhood
$V\subset N_0\setminus\Exc(\pi)$ and a fixed fibre polydisc
$D_\varepsilon^r$ such that
\[
\mathcal A:=V\times D_\varepsilon^r\subset U,\qquad
s(\mathcal A)\subset N_0,\qquad S\cap\mathcal A\neq\varnothing.
\]
Replace $S$ by a nonempty open set contained in $\mathcal A$.
The key point is that the same connected product neighbourhood
$\mathcal A$ contains both the zero section over $V$ and
this nonempty patch of $S$.
Although this choice of $\mathcal A$ is not essential in the
torus blow-up case, we make it here in preparation for the next example.

\smallskip
\noindent\emph{Step 6. The displacement on the hypersurface.}
Since both $p(v)$ and $s(v)$ lie in $N_0$ for $v\in\mathcal A$,
the coordinate identity from Step~2 gives
$
\delta|_{\mathcal A}=Q_0\circ s-Q_0\circ p
$.
On $S$, the first term vanishes, so
\[
\delta|_S=-Q_0\circ p|_S.
\]
Thus the displacement on $S$ depends only on the base point $p(v)$.

\smallskip
\noindent\emph{Step 7. Extending the rank bound by analytic continuation.}
Put
$\mathcal A^\times:=\mathcal A\setminus0_X$.
In the chosen trivialization,
$\mathcal A^\times=V\times(D_\varepsilon^r\setminus\{0\})$,
so $\mathcal A^\times$ is connected.
By Step~4, we have
$
\operatorname{rank}\left(
d\delta\big|_{\ker d(\alpha\circ\nu)}
\right)\leq1
$
on a nonempty open subset of $\mathcal A^\times$.
By analytic continuation, this inequality holds wherever
$\alpha\circ\nu$ is a holomorphic submersion.

\smallskip
\noindent\emph{Step 8. The differential has rank $n$ on $S$.}
Since $\nu(S)$ is open in $\mathbf P(E)$, we can choose $\ell\in S$
such that $\alpha$ is a holomorphic submersion at
$[\ell]:=\nu(\ell)$ and \eqref{eq:torus-blowup-dominance}
holds there.
The map $\nu|_S$ is locally biholomorphic and identifies
$T_\ell S\cap\ker d(\alpha\circ\nu)_\ell$
with $\ker d\alpha_{[\ell]}$.
Since $p=q\circ\nu$, we obtain
\[
dp_\ell\left(
T_\ell S\cap\ker d(\alpha\circ\nu)_\ell
\right)
=T_{p(\ell)}X.
\]
By Step~6, we have $\delta=-Q_0\circ p$ on $S$.
Since $p(\ell)\in V\subset N_0\setminus\Exc(\pi)$ and
$(dQ_0)_{p(\ell)}$ has rank $n$, the preceding surjectivity
yields
\[
\operatorname{rank}\left(
(d\delta)_\ell
\big|_{T_\ell S\cap\ker d(\alpha\circ\nu)_\ell}
\right)=n.
\]
This contradicts the rank bound in Step~7, since $n\geq2$.
Thus $X$ admits no local dominating spray.
\end{proof}

\begin{example}[Kummer surfaces without local dominating sprays]
\label{ex:kummer-no-local-spray}
Let $\mathbb T=\mathbb C^2/\Lambda$ be a two-dimensional complex
torus of algebraic dimension zero, and let $X$ be its
Kummer surface. Then $X$ admits no local dominating spray.
\end{example}

\begin{proof}
We note that $\mathbb T$ contains no curves, and hence no
nontrivial subvarieties.
Indeed, let $C\subset\mathbb T$ be an irreducible curve.
Since $K_{\mathbb T}$ is trivial, the adjunction formula gives
$C^2=2p_a(C)-2$.
As $\mathbb T$ contains no rational curves, we have $p_a(C)\ge1$, and hence $C^2\ge0$.
If $C^2>0$, then the line bundle $\mathcal O_{\mathbb T}(C)$ is big,
and hence $\mathbb T$ is Moishezon.
If $C^2=0$, then $p_a(C)=1$ and $C$ is a smooth elliptic curve,
which is a translate of a one-dimensional subtorus $\mathbb T'$;
then the quotient map $\mathbb T\to\mathbb T/\mathbb T'$ onto an elliptic curve
yields nonconstant meromorphic functions on $\mathbb T$.
In either case, $a(\mathbb T)\ge1$, a contradiction.

Let $\mathbb T[2]:=(1/2)\Lambda/\Lambda$ be the set of the sixteen
two-torsion points, and let
$\pi\colon\widetilde{\mathbb T}
:=\operatorname{Bl}_{\mathbb T[2]}\mathbb T\to\mathbb T$
be the blow-up of $\mathbb T$ at these points.
The involution $y\mapsto-y$ lifts to an involution
$\widetilde\iota$ of $\widetilde{\mathbb T}$.
The quotient $X=\widetilde{\mathbb T}/\widetilde\iota$ is a
K3 surface, and the quotient map
$\tau\colon\widetilde{\mathbb T}\to X$ is a double cover
branched along the curves
$\Gamma_b:=\tau(\pi^{-1}(b))$.
Set $\Gamma:=\bigcup_{b\in \mathbb T[2]}\Gamma_b$ and $X^\circ:=X\setminus\Gamma$.

Fix $a\in\mathbb T[2]$.
Suppose that a vector bundle $p\colon E\to X$
of rank $r$ carries a local spray $s\colon U\to X$ whose
fibre derivative $\varphi\colon E\to T_X$ is surjective
at every point of $\Gamma_a$.

As in Example~\ref{ex:torus-blowup-no-local-spray}, we use
a displacement whose Taylor coefficients impose a rank bound
along the fibres of an algebraic reduction of $\mathbf P(E)$.
We then construct a hypersurface $S$ that $s$ maps into
$\Gamma_a$. Passing to a branched double cover makes the
displacement holomorphic over $S$, where its differential
has rank two in the same directions; this gives a contradiction.

\smallskip
\noindent\emph{Step 1. The squared displacement.}
Via the natural quotient map $\mathbb C^2\to\mathbb T$,
we identify a small neighbourhood of the origin in $\mathbb T$
with a neighbourhood of the origin in $\mathbb C^2$,
and we use this identification to regard small torus differences
as vectors.

The maps $\pi$ and $\tau$ induce a natural unramified double
cover $\tau^{\circ} \colon \mathbb T^\circ:=\mathbb T\setminus\mathbb T[2]\to X^\circ$, 
whose two sheets are interchanged by $y\mapsto-y$.

Take a fibrewise ball bundle $U^\circ \subset U \cap p^{-1}(X^\circ)$ 
such that $s(U^\circ)\subset X^\circ$.
For $x\in X^\circ$, we lift $s|_{U^\circ_x}$ to a holomorphic
map $\widetilde s_x\colon U^\circ_x\to\mathbb T^\circ$,
choosing one point of the preimage of $x$ as its initial value.
After a further fibrewise shrinking, we define a holomorphic map
\[
\delta\colon U^\circ\to S^2\mathbb C^2
\quad \text{ by } \delta(v):=
\bigl(\widetilde s_x(v)-\widetilde s_x(0_x)\bigr)^{\otimes2}
\in S^2\mathbb C^2,\quad (v\in U^\circ_x).
\]
Here the difference is expressed in the fixed coordinates
near the origin, and
$u^{\otimes2}=(u_1^2,u_1u_2,u_2^2)$ for $u=(u_1,u_2)$.
This $\delta$ is well defined and holomorphic.
Indeed, changing the initial value changes
the signs of both terms but leaves $\delta$ unchanged.
Moreover, the lifts vary holomorphically over small simply connected neighbourhoods in $X^\circ$, so these expressions
define a holomorphic map $\delta\colon U^\circ\to S^2\mathbb C^2$.

Since the difference vanishes on the zero section, its square
has the fibrewise Taylor expansion
\begin{equation}\label{eq:expansion}
\delta(v)=\sum_{k\geq2}\delta_k(v),\qquad
\delta_k(v):=\frac{1}{k!}
\left.\frac{d^k}{dt^k}\right|_{t=0}\delta(tv).
\end{equation}
Each $\delta_k$ is a homogeneous polynomial of degree $k$
on every fibre and corresponds to a section of
$S^kE^\vee\otimes(X^\circ\times S^2\mathbb C^2)$ over $X^\circ$.
The series in \eqref{eq:expansion}, together with its first
derivatives, converges locally uniformly on $U^\circ$.

\smallskip
\noindent\emph{Step 2. Meromorphic Taylor coefficients.}
In this step, we prove that $\delta_{k}z_{1}^{k}$ extends holomorphically across $\Gamma_b$,
where $z_{1}$ is a local defining function of $\Gamma_b$,
and hence that $\delta_{k}$ is a meromorphic section of $S^kE^\vee\otimes(X\times S^2\mathbb C^2)$.
To this end, we first describe the domain of $\delta$ precisely.

Choose $\rho>0$ so small that $B_{2\rho}\subset\mathbb C^2$
lies in the origin chart fixed above and the neighbourhoods
$b+B_\rho\subset\mathbb T$, $b\in\mathbb T[2]$, are pairwise disjoint.
For each $b$, we consider the
coordinate map 
$$u_b\colon b+B_\rho\to B_\rho \quad 
 \text{ defined by }b+u\mapsto u
$$ 
and put 
\[
\widetilde N_b:=\pi^{-1}(b+B_\rho),\qquad
N_b:=\tau(\widetilde N_b).
\]
The restriction of the natural double cover gives
$b+B_\rho\setminus\{b\}\to N_b\setminus\Gamma_b$.
For two points of $b+B_\rho$, their torus difference is
represented by the difference of their $u_b$-coordinates.

Near a point of $\Gamma_b$, choose coordinates
$z=(z_1,z_2)$ with $\Gamma_b=\{z_1=0\}$ and trivialize $E$
with fibre coordinates $w\in\mathbb C^r$.
Since $s(z,0)=z$, we have
$|s^1(z,w)-z_1|\leq C|w|$ for some uniform constant $C$.
Fix $c>0$ so small that, for $z_1\neq0$,
the full fibre ball
$D_z:=\{w:|w|<c|z_1|\}$ satisfies
\[
|s^1(z,w)|\geq\tfrac12|z_1|>0
\qquad(w\in D_z).
\]
Thus $s(z,D_z)$ avoids $\Gamma_b$, and the map $w\mapsto s(z,w)$ on $D_z$ lifts to $b+B_\rho\setminus\{b\}$
via the unramified cover $b+B_\rho\setminus\{b\}\to N_b\setminus\Gamma_b$;
we denote this lift by $\widetilde s_z$.
Near $w=0$, the displacement from Step~1 is given by
\[
\delta(z,w)=
\bigl(u_b(\widetilde s_z(w))
      -u_b(\widetilde s_z(0))\bigr)^{\otimes2}.
\]
The right-hand side is defined on all of $D_z$, independently
of the choice of lift, and each component is bounded by
$4\rho^2$, uniformly in $z$.

Consider a component 
$\delta_k^{ij}(z,w)=\sum_{|\mu|=k}a_\mu^{ij}(z)w^\mu$
of $\delta_{k}$, where $1\leq i\leq j\leq2$ and $\mu$ is a multi-index.
Since the closed fibre polydisc of radius
$c|z_1|/(2\sqrt r)$ lies in $D_z$,
Cauchy's estimates for the coefficients give
\[
|a_\mu^{ij}(z)|
\leq4\rho^2\left(\frac{2\sqrt r}{c|z_1|}\right)^k
\qquad(|\mu|=k).
\]
Consequently, by the Riemann extension theorem, $z_1^ka_\mu^{ij}$ extends holomorphically across $\Gamma_b$.
Thus each $\delta_k$ extends meromorphically across $\Gamma_b$
with poles of order at most $k$.
Note that only the individual Taylor coefficients are extended. 

\smallskip

For later use, we introduce the squared coordinate map
$$
Q_b\colon N_b\to S^2\mathbb C^2
\text{ defined by }Q_b\bigl(\tau(\widetilde y)\bigr)
:=
\bigl(u_b(\pi(\widetilde y))\bigr)^{\otimes2}. 
$$ 
It is well defined and holomorphic since $b=-b$ and
$u_b(-y)=-u_b(y)$.
In a blow-up chart, write $u_b\circ\pi=(t,t\zeta)$.
The quotient coordinates are then $z_1=t^2$ and $z_2=\zeta$, which gives
$Q_b(z_1,z_2)=(z_1,z_1z_2,z_1z_2^2)$.
This expression shows that $Q_b$ vanishes on $\Gamma_b$ and
has differential of rank two on $N_b\setminus\Gamma_b$.

\smallskip
\noindent\emph{Step 3. Fixing the algebraic parameter.}
Let $q\colon\mathbf P(E)\to X$ be the bundle of lines in $E$,
and let $\nu\colon E\setminus0_X\to\mathbf P(E)$ be the
natural projection, so that $q\circ\nu=p$.
Choose an algebraic reduction
$\alpha\colon\mathbf P(E)\dashrightarrow Z$ such that 
$Z$ is a smooth projective variety.

We show that
$(q,\alpha) \colon \mathbf{P}(E) \dashrightarrow X\times Z$
is dominant, as in Step~3 of
Example~\ref{ex:torus-blowup-no-local-spray}.
Let $G\subset X\times Z$ be the closure of the image of $(q,\alpha)$.
It is an irreducible compact analytic subset surjecting onto both factors.
Suppose that $G\neq X\times Z$.
Then the general fibres of $G\to Z$ are proper analytic subsets of $X$.
Since $\mathbb T$ contains no curves, every curve in $X$ is contained in $\Gamma$;
indeed, the image under $\pi$ of an irreducible curve in $\widetilde{\mathbb T}$ not contained in the exceptional set would be a curve in $\mathbb T$.
If the general fibres are finite, then $G\to Z$ is generically finite,
which makes $G$ and its image $X$ Moishezon; this contradicts
$a(X)=a(\widetilde{\mathbb T})=a(\mathbb T)=0$.
Otherwise, the general fibres are contained in $\Gamma$, and hence $G\subset\Gamma\times Z$,
which contradicts the surjectivity of $G\to X$.
Thus $(q,\alpha)$ is dominant.
Hence its differential is surjective at a general point, and therefore
\begin{equation}\label{eq:kummer-dominance}
dq_{[\ell]}\bigl(\ker d\alpha_{[\ell]}\bigr)
=T_{q([\ell])}X
\end{equation}
at a general point $[\ell]\in\mathbf P(E)$.

\smallskip
\noindent\emph{Step 4. The rank bound from the Taylor coefficients.}
Since $\delta$ is the square of the displacement,
its degree-two term is
\[
\delta_2(v)=
\left(
\left.\frac{d}{dt}\right|_{t=0}
\bigl(\widetilde s_x(tv)-\widetilde s_x(0_x)\bigr)
\right)^{\otimes2} 
\quad (v\in E_x),
\]
where the derivative is computed in the fixed origin chart.
The assumption implies that
$(d/dt)|_{t=0}\widetilde s_x(tv)$ is surjective
on a neighbourhood of $\Gamma_a$.
Hence $\gamma:=\delta_2^{11}$ is a nonzero
meromorphic section of $S^2E^\vee$.

Work on the dense open subset of $\mathbf P(E)$ where
$\alpha$ is a holomorphic submersion, the base point lies
in $X^\circ$, and $\gamma$ is nonzero.
For $k\geq2$ and $1\leq i\leq j\leq2$, the ratio
$(\delta_k^{ij})^2/\gamma^k$ is homogeneous of degree zero
and defines a meromorphic function on $\mathbf P(E)$;
hence it can be written as the pullback under $\alpha$ of a rational function on $Z$.
This shows that 
\[
(d\delta_k^{ij})_\ell(\eta)
=\frac{k\delta_k^{ij}(\ell)}{2\gamma(\ell)}
d\gamma_\ell(\eta) \quad (\eta\in\ker d(\alpha\circ\nu)_\ell).
\]
Thus all $d\delta_k$ vanish on
$\ker d\gamma\cap\ker d(\alpha\circ\nu)$.
Differentiating \eqref{eq:expansion} term by term gives the
same conclusion for $d\delta$ on $U^\circ$.
Therefore, since $d\gamma$ is scalar-valued, we obtain
\begin{equation}\label{eq:kummer-rank-bound}
\operatorname{rank}
\left((d\delta)_\ell\big|_{\ker d(\alpha\circ\nu)_\ell}\right)
\leq1.
\end{equation}

\smallskip
\noindent\emph{Step 5. A hypersurface mapped into $\Gamma_a$.}
Fix $x_0\in\Gamma_a$, choose coordinates $z=(z_1,z_2)$
centred at $x_0$ with $\Gamma_a=\{z_1=0\}$, and trivialize
$E$ with fibre coordinates $w=(w_1,\ldots,w_r)$.
By the surjectivity of $\varphi_{x_0}$, we may choose the frame so that
$\partial s^1/\partial w_1(0,0)\neq0$.
Shrink the product neighbourhood so that its image under $s$
lies in this coordinate chart in $N_a$ and this derivative
is nonzero throughout.

Applying the same implicit-function construction as in Step~5
of the preceding example, 
we obtain a nonempty patch $S\subset s^{-1}(\Gamma_a)$ with
$p(S)\subset X^\circ$ such that $\nu|_S$ is locally biholomorphic.
As in that construction, choose a small simply connected
neighbourhood $V\subset N_a\setminus\Gamma$ and a fixed
fibre polydisc $D_\varepsilon^r$ such that
\[
\mathcal A:=V\times D_\varepsilon^r\subset U,\qquad
s(\mathcal A)\subset N_a,\qquad S\cap\mathcal A\neq\varnothing.
\]
Replace $S$ by a nonempty patch contained in $\mathcal A$.

The point is that
the displacement $\delta$, which is defined near the zero section over $V$,
cannot be defined along $S$. 
Indeed, the lift used to define it does not extend across $S$:
in local coordinates, such an extension would require
a holomorphic square root of $s^1$, which vanishes to first order
along $S$.
To overcome this difficulty, in the next step we extend the displacement on a branched cover.

\smallskip
\noindent\emph{Step 6. Extending the displacement on a branched cover.}
Consider the fibre product
\[
\widehat{\mathcal A}
:=
\{(v,\widetilde y)\in\mathcal A\times\widetilde N_a:
s(v)=\tau(\widetilde y)\},
\qquad
\operatorname{pr}(v,\widetilde y):=v,
\]
which fits into the Cartesian diagram
\[
\xymatrix@C=5em@R=3em{
\widehat{\mathcal A}
\ar[r]^{(v,\widetilde y)\mapsto\widetilde y}
\ar[d]_{\operatorname{pr}}
&
\widetilde N_a
\ar[d]^{\tau}_{\scriptstyle(t,\zeta)\mapsto(t^2,\zeta)}
\\
\mathcal A \ar[r]_{s}
&
N_a
}
\]
In these coordinates, the subvariety $\widehat{\mathcal A}$ is defined by $t^2-s^1(z,w)=0$, 
which shows, since $\partial s^1/\partial w_1\neq0$, that $S$ is a smooth hypersurface.

The projection $\operatorname{pr}$ is a double cover,
ramified precisely along $t=0$, so its ramification locus is
\[
R:=\{(v,\widetilde y)\in\widehat{\mathcal A}:
\widetilde y\in\pi^{-1}(a)\}.
\]
On $R$, we have $s^1(z,w)=0$ and $\zeta=s^2(z,w)$.
Hence each $v\in s^{-1}(\Gamma_a)\cap\mathcal A$ determines
a unique point of $R$, and the resulting inverse is holomorphic.
Consequently,
$\operatorname{pr}|_R\colon
R \to s^{-1}(\Gamma_a)\cap\mathcal A$ is an isomorphism. 

Since $V \subset X^{\circ}$ is simply connected, 
we obtain a lift of the points of $V$ to $a+B_\rho\setminus\{a\}$, and we denote its
coordinate representation by $u\colon V\to B_\rho\setminus\{0\}$.
By the construction of $Q_{a}$ (see Step~2), we have $u(x)^{\otimes2}=Q_a(x)$.
Define
\[
\widehat\delta(v,\widetilde y)
:=
\bigl(u_a(\pi(\widetilde y))-u(p(v))\bigr)^{\otimes2} 
\quad ( (v,\widetilde y)\in\widehat{\mathcal A}).
\]
This map is holomorphic on all of $\widehat{\mathcal A}$, including $R$,
although $\delta$ is not defined along $S$. 

Near the lift of the zero section on which
$u_a(\pi(\widetilde y))=u(x)$ at $v=0_x$, the target lift
coincides with the fibrewise lift from Step~1 with initial coordinate $u(x)$.
Hence, by the definition of $\delta$, we have
$\widehat\delta=\delta\circ\operatorname{pr}$ on a neighbourhood
of this lifted zero section.
We use this equality only on this initial sheet.

On $R$, the lifted target satisfies $\pi(\widetilde y)=a$,
so its coordinate vanishes. Hence
\begin{equation}\label{eq:kummer-displacement-on-ramification}
\widehat\delta(v,\widetilde y)
=\bigl(-u(p(v))\bigr)^{\otimes2}
=Q_a(p(v)),
\qquad (v,\widetilde y)\in R.
\end{equation}
Thus the displacement on $R$ depends only on the base point $p(v)$.

\smallskip
\noindent\emph{Step 7. Extending the rank bound by analytic continuation.}

Removing the inverse image of the zero section, we put
$\widehat{\mathcal A}^{\times}
:=\widehat{\mathcal A}\setminus\operatorname{pr}^{-1}(0_X)$.
Note that $\widehat{\mathcal A}^{\times}$ is still connected.
On a nonempty open subset of the initial sheet from Step~6,
the map $\operatorname{pr}$ is locally biholomorphic and
$\widehat\delta=\delta\circ\operatorname{pr}$.
The rank bound \eqref{eq:kummer-rank-bound} gives
\begin{equation}\label{eq:upper}
\operatorname{rank}\left(
d\widehat\delta\big|_{\ker d(\alpha\circ\nu\circ\operatorname{pr})}
\right)\leq1 
\end{equation}
on the locus where $\alpha\circ\nu\circ\operatorname{pr}$ is a holomorphic submersion.

\smallskip
\noindent\emph{Step 8. The differential has rank two on $R$.}
Since $\nu(S)$ is open in $\mathbf P(E)$, we can choose $\ell\in S$
such that $\alpha$ is a holomorphic submersion at
$[\ell]:=\nu(\ell)$ and \eqref{eq:kummer-dominance} holds there.
Let $\widehat\ell\in R$ be the unique point over $\ell$.
The restriction of $\nu\circ\operatorname{pr}$ to $R$ is
locally biholomorphic at $\widehat\ell$
and identifies
$T_{\widehat\ell}R\cap
\ker d(\alpha\circ\nu\circ\operatorname{pr})_{\widehat\ell}$
with $\ker d\alpha_{[\ell]}$.
Since $p=q\circ\nu$, we obtain
\[
d(p\circ\operatorname{pr})_{\widehat\ell}
\left(
T_{\widehat\ell}R
\cap\ker d(\alpha\circ\nu\circ\operatorname{pr})_{\widehat\ell}
\right)
=T_{p(\ell)}X.
\]
On $R$, \eqref{eq:kummer-displacement-on-ramification} gives
$\widehat\delta=Q_a\circ p\circ\operatorname{pr}$.
Since $p(\ell)\in V\subset N_a\setminus\Gamma_a$ and
$(dQ_a)_{p(\ell)}$ has rank two, the preceding surjectivity yields
\[
\operatorname{rank}\left(
d\widehat\delta_{\widehat\ell}
\big|_{T_{\widehat\ell}R
\cap\ker d(\alpha\circ\nu\circ\operatorname{pr})_{\widehat\ell}}
\right)=2.
\]
This contradicts \eqref{eq:upper}.
Thus $\varphi$ fails to be surjective at some point of $\Gamma_a$.
Since $a\in\mathbb T[2]$ was arbitrary, this holds for every
exceptional curve. In particular, $X$ admits no local dominating spray.
\end{proof}

\begin{remark}[Oka K3 surfaces that are not elliptic, and non-openness of ellipticity]
\label{rem:kummer-consequences}
Example~\ref{ex:kummer-no-local-spray}, combined with the recent work of
Xie and Zhao \cite{XieZhao2026}, has the following consequences.
Note that a two-dimensional complex torus $\mathbb T$ has algebraic
dimension zero if its N\'eron--Severi group vanishes.
Indeed, if $a(\mathbb T)\ge1$, then a nonconstant meromorphic function
on $\mathbb T$ has a nonzero polar divisor, whose class in
$\mathrm{NS}(\mathbb T)$ is nonzero since $\mathbb T$ is K\"ahler.

\smallskip
\noindent(1) \emph{Oka K3 surfaces that are not elliptic.}
By \cite[Theorem~E\,(2)]{XieZhao2026}, there exist Oka Kummer surfaces
whose N\'eron--Severi lattice coincides with the Kummer lattice.
Since the Picard number of the Kummer surface of $\mathbb T$ equals
$\rho(\mathbb T)+16$, the underlying torus $\mathbb T$ of such a surface
has Picard number zero. Since $H^2(\mathbb T,\mathbb Z)$ is torsion-free, $\mathrm{NS}(\mathbb T)=0$, and hence $\mathbb T$ has algebraic dimension zero.
By Example~\ref{ex:kummer-no-local-spray}, these Oka K3 surfaces are
not elliptic.

\smallskip
\noindent(2) \emph{Ellipticity is not open in families.}
The Fermat quartic $X_0\subset\PP^3$ is a Kummer surface
by a theorem of Mizukami (see \cite[Appendix, Theorem~A.1]{ISZ2011})
and it is Oka by \cite[Corollary~4.3]{XieZhao2026}.
Since $X_0$ is projective, it is elliptic by \cite{Forstneric2026MRL}
(or by Theorem~\ref{thm-main}).
Let $\mathbb T_0$ be a two-dimensional complex torus whose Kummer
surface is $X_0$.
Complex tori $\mathbb C^2/\Lambda$ are parametrized by an open subset of the space of lattices, and
those of positive algebraic dimension, that is, those containing a curve, form a countable union of proper analytic subsets
of the parameter space.
Hence there is a holomorphic family $\{\mathbb T_t\}_{t\in\mathbb D}$
of two-dimensional complex tori over the unit disc $\mathbb D$,
with the given central fibre $\mathbb T_0$, such that $\mathbb T_t$
has algebraic dimension zero for every $t$ outside a countable subset of $\mathbb D$.
Performing the Kummer construction in this family
(blowing up the sixteen sections of two-torsion points and taking the quotient
by the fibrewise involution), we obtain a holomorphic family
$\{X_t\}_{t\in\mathbb D}$ of Kummer surfaces, that is, a proper holomorphic
submersion with fibres $X_t$.
Its central fibre $X_0$ is elliptic, whereas $X_t$ is not elliptic for every
$t$ outside a countable subset of $\mathbb D$ by
Example~\ref{ex:kummer-no-local-spray}.
Therefore ellipticity is not an open condition in holomorphic families
of compact complex manifolds.
\end{remark}

\begin{remark}[Ellipticity is neither open nor closed in families of Oka manifolds]
\label{rem:torus-blowup-families}
Example~\ref{ex:torus-blowup-no-local-spray} shows that ellipticity
is neither an open nor a closed condition in holomorphic families of
compact Oka manifolds.
For comparison, Forstneri\v{c} and L\'arusson showed that the Oka property
itself is not closed in families \cite{ForstnericLarusson2014IMRN};
in our families, every fibre is Oka.

There is a holomorphic family $\{\mathbb T_b\}_{b\in B}$ of
two-dimensional complex tori over a connected complex manifold $B$
such that both the fibres that are projective and the fibres with
vanishing N\'eron--Severi group occur over dense subsets of $B$.
Indeed, take the family of all tori $\mathbb C^2/(I_2,b)\,\mathbb Z^4$
with $\det(\operatorname{Im}b)>0$; its period map identifies $B$ with an
open subset of the period domain
$Q=\{[\sigma]:\sigma^2=0,\ \sigma\cdot\bar\sigma>0\}$
in $\mathbf P(H^2(\mathbb T,\mathbb C))$, where $\mathbb T$ is a fixed fibre
(see the proof of \cite[Theorem~II]{Shioda1978}), and
$\mathrm{NS}(\mathbb T_b)$ consists of the integral classes orthogonal to
$\sigma_b$.
For each nonzero class $c$, the locus $\{c\cdot\sigma=0\}$ is a proper
hypersurface of $Q$, so $\mathrm{NS}(\mathbb T_b)=0$ off countably many
hypersurfaces.
On the other hand, every $\sigma\in Q$ is orthogonal to some real class
$c$ with $c^2>0$, since the orthogonal complement of the positive plane
$\langle\operatorname{Re}\sigma,\operatorname{Im}\sigma\rangle$ has
signature $(1,3)$; approximating $c$ by rational classes shows that the
loci $\{c\cdot\sigma=0\}$ with $c$ rational and $c^2>0$ are dense in $Q$,
and over them $\mathbb T_b$ is projective \cite[Theorem~IV.6.2]{BHPV2004}.

Blowing up the zero section, we obtain a holomorphic family
$\{X_b\}_{b\in B}$, where $X_b$ is the blow-up of $\mathbb T_b$ at a point.
Every $X_b$ is Oka by \cite[Corollary~6.4.13]{Forstneric2017book}.
If $\mathbb T_b$ is projective, then so is $X_b$, and hence $X_b$ is
elliptic by \cite{Forstneric2026MRL} (or by Theorem~\ref{thm-main}).
If $\mathrm{NS}(\mathbb T_b)=0$, then $\mathbb T_b$ has algebraic dimension
zero (Remark~\ref{rem:kummer-consequences}), and hence $X_b$ is not elliptic
by Example~\ref{ex:torus-blowup-no-local-spray}.
Thus both the elliptic and the non-elliptic fibres occur over dense
subsets of $B$, so ellipticity is neither open nor closed in this family.
\end{remark}

\section{Globalization of local sprays}\label{sec:globalization}

Let $f\colon U\to Y$ be a holomorphic map between complex manifolds. A \emph{product spray} with \emph{core} $f$ is a holomorphic map $H\colon U\times P\to Y$ such that $H(\cdot,0)=f$, where $P$ is a ball centred at $0$ in some $\CC^M$; thus a product spray is a local spray over $f$ on the trivial bundle $U\times\CC^M$ that is defined on $U\times P$. It is said to be \emph{dominating} over $S\subset U$ if $\partial_\zeta H(x,\zeta)$ maps onto $T_{Y,H(x,\zeta)}$ for all $x\in S$ and $\zeta\in P$. Such maps are called sprays of maps in \cite[Chap.~5]{Forstneric2017book}.

\begin{theorem}\label{thm:oka}
Let $X$ be a weakly pseudoconvex manifold with a K\"ahler form $\omega_X$, and let $E\to X$ be a vector bundle of rank $N\ge1$ with a Hermitian metric.
Assume that, for some smooth Hermitian metric on $K_X^{-1}$ and some constant $c\ge0$,
\begin{equation}\label{eq:hyp}
\begin{gathered}
\langle \sqrt{-1}\Theta_E(v,\bar v)\xi,\xi\rangle\le-\omega_X(v,\bar v)\,|\xi|^2\qquad(v\in T_{X,x},\ \xi\in E_x,\ x\in X),\\
\sqrt{-1}\Theta(K_X^{-1}\otimes\det E)\ge-c\,\omega_X.
\end{gathered}
\end{equation}
We identify $E$ with its total space and let $A$ be its zero section.
Let $Y$ be an Oka manifold, and let $f\colon E\to Y$ be a continuous map that is holomorphic on a neighbourhood $U$ of $A$ and admits a product spray with core $f|_U$ that is dominating over $U\setminus A$.
Then, for every $r\ge1$, there exists a holomorphic map $\tilde f\colon E\to Y$ that agrees with $f$ to order $r$ along $A$.
\end{theorem}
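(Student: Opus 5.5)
Since $X$ is weakly pseudoconvex, it carries a smooth plurisubharmonic exhaustion $\psi$. With the Hermitian metric $h$ on $E$, the function $\Psi(v):=\psi(p(v))+|v|_h^2$ on the total space should be a plurisubharmonic exhaustion. The curvature bound in \eqref{eq:hyp} is exactly what makes $\ddc|v|_h^2$ semipositive, and in fact strictly positive in the horizontal directions off $A$. So $E$ is weakly pseudoconvex, and it is exhausted by the sublevel sets $\Omega_j:=\{\Psi<c_j\}$ with $c_j\nearrow\infty$.

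The plan is to build inductively holomorphic maps $f_j\colon\Omega_j\to Y$ (more precisely on a neighbourhood of $\overline\Omega_j$). Each $f_j$ should agree with $f$ to order $r$ along $A\cap\Omega_j$, approximate $f_{j-1}$ on $\overline\Omega_{j-1}$, and remain homotopic to $f$ through continuous maps extending to $E$. The limit then gives $\tilde f$.

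The inductive step follows the local-to-global scheme of \cite[Chap.~5]{Forstneric2017book}. We pass from $\overline\Omega_{j-1}$ to $\overline\Omega_j$ by finitely many convex bumps $B$ attached to the current compact set $K$. Each bump lies either in a small coordinate polydisc off $A$, or in a product chart $V\times D$ over a Stein $V\subset X$ meeting $A$. At each bump we do the following:
\begin{enumerate}
\item[(a)] Dominability. Near $K\cap B$ the current map carries a dominating product spray. Off $A$ we propagate the given spray by the approximation steps. Along $A$, where domination fails, we use the fibre-submersive spray of Lemma~\ref{lem:fibresubmersive}, multiplying the spray parameters by functions vanishing to order $r$ on $A$ so that jets are untouched.
\item[(b)] Oka property of $Y$. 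Since $Y$ is Oka, the map on $K\cap B$ extends holomorphically to $B$, approximating on $K\cap B$. In the charts meeting $A$, we first prescribe the $r$-jet along $A\cap B$ by the given $f$ and use Oka with jet interpolation \cite[Chap.~5]{Forstneric2017book}.
\item[(c)] Gluing. The two product sprays, over $K$ and over $B$, are close on $C:=K\cap B$. We glue them by solving a transition-map splitting $\gamma=\beta\circ\alpha^{-1}$. The new core then agrees with the old maps to order $r$ on $A$.
\end{enumerate}

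The main obstacle is step (c), the splitting lemma (the Heftung lemma of Section~\ref{subsec:heftung}). The difficulty is that $K\cup B$ is a compact set in a weakly (not strongly) pseudoconvex manifold, and $A$ can be noncompact, so $E$ need not be 1-convex and Cartan-type splitting with Stein neighbourhoods is unavailable.

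To handle this, I would linearize. Cutting off with a function $\chi$ supported near $B$, the splitting reduces to solving $\dbar u=g$, with $g=\dbar\chi\cdot(\text{difference})$ supported near $C$. The solution $u$ must have $L^2$ bounds on $K\cup B$ and must vanish to order $r$ along $A$. I would use H\"ormander's weighted estimate on the weakly pseudoconvex manifold $E$ (or on a sublevel set). The weight is $e^{-\phi}$ with $\phi=m\Psi+(r+N)\log|v|_h^2$, whose non-integrability forces vanishing to order $r$ on $A$. The required positivity of $\ddc\phi+\mathrm{Ric}$ comes from the two hypotheses. The first makes $\ddc\log|v|_h^2\ge$ (horizontal positivity). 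The second bounds the canonical bundle of $E$, namely $K_E=p^*(K_X\otimes\det E^\vee)$, so it is absorbed by $m\Psi$ with $m>c$. Standard interior estimates turn the $L^2$ bounds into sup bounds, and a Newton/implicit-function iteration completes the nonlinear splitting.

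Finally, convergence is obtained with summable approximation errors. The limit on $E=\bigcup\Omega_j$ is holomorphic and retains the order-$r$ jet of $f$ along $A$.
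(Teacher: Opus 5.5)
There is a genuine gap at the zero section when $X$ is noncompact. The sets $\Omega_j=\{\Psi<c_j\}$ are relatively compact and $A$ is not, so $\partial\Omega_j$ crosses $A$ along $\{\rho=c_j\}$, and you must attach bumps $B$ meeting $A$. Two things break there.

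First, $\Psi=\rho\circ p+|v|^2$ is strictly psh and noncritical only off $A$. Along $A$ it is only as convex as $\rho$, which is merely psh and may have large critical sets (think of $X=M\times\C$ with $M$ compact). So no decomposition into convex bumps, and no handle theory, is available there.

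Second, and decisively, step (c) fails on $C=K\cap B$ near $A$. To build the transition map $\gamma$ with $T\circ\gamma=H$ (Lemma~\ref{lem:transition}) you need a holomorphic right inverse of $\partial_\zeta H$. Conversely, a near-identity $\gamma$ is a biholomorphism in the parameter, so $T\circ\gamma=H$ with $T$ dominating would force $H$ to be dominating. But $H$ is a spray over all of $K$, and $K$ contains $A\cap K\cong\{\rho\le c_{j-1}\}$. This set need not have a Stein neighbourhood, so you cannot replace $H$ by a spray that is dominating along $A$. The given spray is only assumed dominating off $A$; the one from Lemma~\ref{lem:fibresubmersive} is even constant in the parameter along $A$. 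Your device in (a) only makes $H$ more degenerate there.

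Two further problems arise on such bumps. The $\dbar$-datum $c\,\dbar\chi$ would be supported near $A$, where your singular weight gives it infinite norm. And Oka with jet interpolation on $B$ needs an initial map holomorphic near $(K\cap B)\cup(A\cap B)$. You do not have one, since $f_{j-1}$ agrees with $f$ only to order $r$ along $A\cap K$.

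The missing idea is to keep all of $A$ inside the domain at every stage, so that the boundary never meets $A$. The paper exhausts $E$ by the noncompact disc bundles $\{\theta_w<1\}$, with $\theta_w=|\xi|^2e^{|\xi|^2}e^{w\circ\pi}$.
\begin{itemize}
\item By Lemma~\ref{lem:total}, $\theta_w$ is strictly psh without critical points on $E\setminus A$. So the radius is enlarged by convex bumps lying in $E\setminus A$, with no handles, and domination off $A$ suffices.
\item The radius is enlarged only over $\{\rho\le T\}$, with $w=\chi\circ\rho$ fixed near the ends, so finitely many bumps suffice at each stage.
\item The price is a noncompact $U$-side. Sup bounds there come from the fast growth of $\chi$ and the pointwise estimate (Lemmas~\ref{lem:pointwise} and~\ref{lem:chi}). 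The $L^2$ estimate is proved on the complete K\"ahler manifold $D\setminus A$.
\end{itemize}

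A smaller point concerns your weight. Near $A$, the term $-c\,\pi^*\omega_X$ from $K_E^{-1}$ cannot be absorbed by $m\ddc\Psi$. The horizontal part of $\ddc|v|^2$ is $\omega_X|v|^2\to0$, and $\ddc\rho$ is only semipositive. The term must be absorbed by the $\log|v|^2$ part of the weight, whose coefficient must therefore exceed $c$. This is why the paper uses $q^{-m}$ with $m\ge c+1$ and enlarges $r$.
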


Theorem~\ref{thm:local-to-global-spray} is deduced from Theorem~\ref{thm:oka} in Section~\ref{sec:proofmain}.

\emph{Strategy of the proof of Theorem~\ref{thm:oka}.} We follow the proof of the Oka principle for Stein sources \cite[Chap.~5]{Forstneric2017book}. We exhaust the total space $E$ by the disc bundles $\{\theta_w<1\}$ defined below, and we pass from one disc bundle to a larger one in finitely many steps, each of which attaches a convex bump $V$ to a set $U\supset A$ (Lemma~\ref{lem:chain}). At each step, the current map $F$ is holomorphic on $U$, and on the convex set $V$ the Oka property of $Y$ provides a holomorphic map $G$ that approximates $F$ on $U\cap V$. To glue $F$ and $G$, we use dominating product sprays $H$ with core $F$ and $T$ with core $G$. Near $U\cap V$, the old spray can be written as $H=T\circ\gamma$, where $\gamma(x,\cdot)$ is close to the identity map of the parameter space (Lemma~\ref{lem:transition}). If $\gamma=\beta\circ\alpha^{-1}$ with $\alpha$ defined over $U$ and $\beta$ over $V$, then $H\circ\alpha$ and $T\circ\beta$ agree on $U\cap V$ and together form a product spray over $U\cup V$ (Lemma~\ref{lem:gluesprays}). This compositional splitting is a nonlinear version of the additive Cousin problem $c=b-a$, from which it is obtained by iteration (Lemma~\ref{lem:nonlin}); the additive problem is solved via the $\dbar$-equation (Lemma~\ref{lem:linsplit}). Here $U$ contains $A$ and need not be compact or Stein, so we solve the $\dbar$-equation with $L^2$ estimates for a weight $q^{-m}$ that is singular along $A$; this weight also forces $\alpha$ to agree with the identity to order $r$ along $A$, which yields the interpolation along $A$. Since the disc bundles are not compact, we also need bounds near the ends of $X$ (Lemmas~\ref{lem:pointwise} and~\ref{lem:chi}). Finally, the functions $\theta_w$ have no critical points off $A$ (Lemma~\ref{lem:total}(d)); hence, unlike in the Stein case, no handles need to be attached.

\subsection{A splitting lemma on weakly pseudoconvex manifolds}\label{subsec:heftung}

In this subsection and in Subsection~\ref{sec:total}, let $X$, $\omega_X$, $E$, $c$ and the metric on $K_X^{-1}$ be as in Theorem~\ref{thm:oka}, and let $\rho\ge0$ be a smooth psh exhaustion function of $X$. We identify $E$ with its total space, denote by $\pi\colon E\to X$ the projection and by $A$ the zero section, and set $n_E=\dim E$. Put
\[
u(\xi)=|\xi|^2,\qquad q=u\,e^{u},\qquad\Psi=\rho\circ\pi+u,\qquad\theta_w=q\,e^{w\circ\pi}\ \ (w\in C^\infty(X)),
\]
and let $\cE$ be the Euler vector field of $E$. We say that a holomorphic function \emph{vanishes to order $r$ along $A$} if all its derivatives of order $<r$ vanish on $A$.

\subsubsection*{The total space and $L^2$ estimates}

\begin{lemma}\label{lem:total}
\begin{enumerate}[label=(\alph*)]
\item $\Psi$ is a psh exhaustion function of $E$, and $\omega_E=\pi^*\omega_X+\ddc u$ is a K\"ahler form.
\item $\ddc\log q\ge\omega_E$ on $E\setminus A$. Consequently, for a psh function $w$ on $X$, $\log\theta_w$ is psh on $E$, $\theta_w$ is strictly psh on $E\setminus A$, and $-\log(b-\theta_w)+\Psi$ is a psh exhaustion function of $\{\theta_w<b\}$.
\item With the induced metric $h_0$ on $K_E^{-1}\cong\pi^*(K_X^{-1}\otimes\det E)$ and $m\ge c+1$,
\begin{equation}\label{eq:positive}
\sqrt{-1}\Theta(K_E^{-1},h_0)+m\,\ddc\log q\ge\omega_E\qquad\text{on }E\setminus A .
\end{equation}
\item $d\theta_w(\cE+\bar\cE)=2(1+u)\,\theta_w$; in particular, $\theta_w$ has no critical points on $E\setminus A$.
\item In local frames, $q^{1/2}$ is locally Lipschitz and $u$ is comparable to the squared Euclidean norm $|\xi|^2$.
\end{enumerate}
\end{lemma}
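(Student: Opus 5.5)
The plan is to work in local holomorphic coordinates $x$ on $X$ and a local holomorphic frame of $E$, with fibre coordinate $\xi$, and to use the Chern connection of the Hermitian metric on $E$. The one basic computation we rely on is the curvature formula for the length function: on $E\setminus A$,
\[
\ddc u=\langle D\xi,D\xi\rangle-\langle\sqrt{-1}\Theta_E\xi,\xi\rangle ,
\]
where $D\xi$ is the covariant differential of the tautological section. We will also use the negative-curvature bound $\ddc\log u\ge\pi^*\omega_X$ on $E\setminus A$, which is the standard consequence of \eqref{eq:hyp}. To see it, note that $\ddc\log u=\ddc u/u-\sqrt{-1}\partial u\wedge\dbar u/u^2$. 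Cauchy--Schwarz gives $|\langle D\xi,\xi\rangle|^2\le u\,|D\xi|^2$, so the $D\xi$ contributions are nonnegative. What remains is $-\langle\sqrt{-1}\Theta_E\xi,\xi\rangle/u\ge\pi^*\omega_X$ by \eqref{eq:hyp}.

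The parts then follow in order. For (a): $\ddc u\ge-\langle\sqrt{-1}\Theta_E\xi,\xi\rangle\ge0$ in horizontal directions, and $\ddc u$ is the fibre metric in vertical directions. Hence $\pi^*\omega_X+\ddc u$ is positive definite; it is closed, so $\omega_E$ is Kähler. $\Psi$ is psh as a sum of psh functions. It is an exhaustion because $\rho$ is exhaustive on $X$ and $u$ is proper on the fibres over compact sets.

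For (b): write $\log q=\log u+u$. Then $\ddc\log q\ge\pi^*\omega_X+\ddc u=\omega_E$ off $A$. Consequently $\log\theta_w=\log q+w\circ\pi$ is psh on $E\setminus A$. Since it equals $-\infty$ on $A$ and is bounded above near $A$, it extends as a psh function across $A$. Strict psh of $\theta_w=e^{\log\theta_w}$ follows from $\ddc e^{\phi}\ge e^{\phi}\ddc\phi$. For the exhaustion of $\{\theta_w<b\}$: $-\log(b-\theta_w)$ is psh, since $-\log(b-e^{\phi})$ is increasing and convex in $\phi$. Adding $\Psi$ makes it proper, because it tends to $+\infty$ both at the boundary $\{\theta_w=b\}$ and at infinity.

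For (c), split $K_E^{-1}\cong\pi^*(K_X^{-1}\otimes\det E)$ using $T_E$ $\cong$ horizontal $\oplus$ $\pi^*E$ (an exact sequence $0\to\pi^*E\to T_E\to\pi^*T_X\to0$, whose determinant gives the isomorphism). With the pulled-back metric, $\sqrt{-1}\Theta(K_E^{-1},h_0)=\pi^*\sqrt{-1}\Theta(K_X^{-1}\otimes\det E)\ge-c\,\pi^*\omega_X$. By (b), $m\,\ddc\log q\ge m\,\omega_E\ge(c+1)\omega_E$. Using $\pi^*\omega_X\le\omega_E$, the sum is $\ge\omega_E$.

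For (d), the Euler field satisfies $u(e^{t}\xi)=e^{2\operatorname{Re}t}u(\xi)$ for real $t$, so $(\cE+\bar\cE)u=2u$. Moreover $w\circ\pi$ is annihilated by $\cE+\bar\cE$, which is vertical. Hence $d\theta_w(\cE+\bar\cE)=\theta_w\bigl(2u/u+2u\bigr)=2(1+u)\theta_w$. This is nonzero off $A$, so $\theta_w$ has no critical points there.

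For (e), in a local frame $u=\xi^*H(x)\xi$ with $H$ smooth and positive definite. This makes $u$ comparable to $|\xi|^2$ on compact sets, and $u^{1/2}$ is a locally Lipschitz norm-like function. Then $q^{1/2}=u^{1/2}e^{u/2}$ is a product of locally Lipschitz functions.

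The only real obstacle is the sign and Cauchy--Schwarz bookkeeping in the curvature formula for $\ddc\log u$, together with checking the psh extension across $A$. Both are standard.
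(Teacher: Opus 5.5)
Your proposal is correct and takes essentially the same route as the paper. The paper likewise bounds $\ddc u$ and $\ddc\log u$ in a normal frame (your Chern-connection identity with Cauchy--Schwarz), uses the exact sequence $0\to\pi^*E\to T_E\to\pi^*T_X\to0$ with $\pi^*\omega_X\le\omega_E$ for (c), Euler-field homogeneity for (d), the local form $u=|h^{1/2}\xi|^2$ for (e), and the compactness of $\{\Psi\le M+\log b,\ \theta_w\le b-e^{-M}\}$, which is what your exhaustion heuristic in (b) amounts to. One point to tighten: the identity for $\ddc u$ holds on all of $E$, not only off $A$, and it has no horizontal--vertical cross terms in the Chern splitting; this is what justifies positive definiteness of $\omega_E$, including along $A$, rather than mere positivity on the two subspaces separately.
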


\begin{proof}
At $\xi_0\in E_{y_0}$, in a frame normal at $y_0$ and for a tangent vector $(v,\tau)$, \eqref{eq:hyp} gives
\[
\begin{aligned}
\ddc u(v,\tau)&\ge|\tau|^2+\omega_X(v,\bar v)|\xi_0|^2,\\
\ddc\log u(v,\tau)&\ge\frac{|\tau|^2|\xi_0|^2-|\langle\tau,\xi_0\rangle|^2}{|\xi_0|^4}+\omega_X(v,\bar v)\ge\omega_X(v,\bar v)
\end{aligned}
\]
for $\xi_0\ne0$. This gives (a) and $\ddc\log q=\ddc\log u+\ddc u\ge\omega_E$; the rest of (b) follows from $\ddc e^\phi\ge e^\phi\ddc\phi$ and the compactness of $\{\Psi\le M+\log b,\ \theta_w\le b-e^{-M}\}$. For (c), we use the exact sequence $0\to\pi^*E\to T_E\to\pi^*T_X\to0$, \eqref{eq:hyp} and $\pi^*\omega_X\le\omega_E$. For (d), the identities $\partial u(\cE)=u$ and $\partial(w\circ\pi)(\cE)=0$ give $\partial\log\theta_w(\cE)=1+u$. Finally, (e) is clear from $u=|h(y)^{1/2}\xi|^2$.
\end{proof}

For $m\ge c+1$, let $d\mu$ be the volume form of $h_0$ and put $d\mu_m=q^{-m}d\mu$. For a $\C^k$-valued $(0,j)$-form $g$ on an open set $S\subset E$, identified with a $K_E^{-1}$-valued $(n_E,j)$-form, put $\|g\|_S^2=\int_S|g|^2d\mu_m$, where $|g|$ is measured with respect to $\omega_E$.

\begin{lemma}\label{lem:L2}
Let $D\subset E$ be a weakly pseudoconvex open set, let $j\ge1$, and let $g$ be a $\dbar$-closed $(0,j)$-form on $D$ with $L^2_{\mathrm{loc}}$ coefficients and $\|g\|_D<\infty$. Then the solution $v$ of $\dbar v=g$ with minimal norm exists, depends linearly on $g$, and satisfies $\|v\|_D\le\|g\|_D$.
\end{lemma}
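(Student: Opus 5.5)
This is Hörmander's $L^2$ existence theorem for the weight $q^{-m}$, written for $K_E^{-1}$-valued forms. We apply it on the possibly non-complete but weakly pseudoconvex Kähler manifold $D\setminus A$, or on $D$ with a singular weight. We regard $g$ as a $K_E^{-1}$-valued $(n_E,j)$-form, and we equip $K_E^{-1}$ with the singular metric $h_m:=h_0\,q^{-m}$. Its curvature current is
\[
\sqrt{-1}\Theta(K_E^{-1},h_m)=\sqrt{-1}\Theta(K_E^{-1},h_0)+m\,\ddc\log q .
\]
By Lemma~\ref{lem:total}(b), $\log q$ is psh on all of $E$, because $\log\theta_0=\log q$. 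By \eqref{eq:positive}, this curvature current is $\ge\omega_E$ on $E\setminus A$. Moreover, $|g|^2_{h_m}d V_{\omega_E}$ is exactly $|g|^2d\mu_m$ under the identification fixed before the lemma. So the norm $\|\cdot\|_D$ is the $L^2$ norm for $(K_E^{-1},h_m)$ with respect to $\omega_E$.

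The plan is first to work on $D^\times:=D\setminus A$. Since $A$ is a closed analytic subset and $D$ is weakly pseudoconvex, $D^\times$ carries a complete Kähler metric; this is Demailly's result on complements of analytic sets in weakly pseudoconvex Kähler manifolds. We can also see it directly: $-\log(-\log\text{-type})$ terms added to the exhaustion of $D$ give a complete metric. On $D^\times$ the weight $h_m$ is smooth, its curvature is $\ge\omega_E$, and $\omega_E$ is Kähler. Demailly's version of Hörmander's theorem for complete Kähler manifolds with a possibly non-complete Kähler metric $\omega_E$ then applies. For every $\dbar$-closed $(n_E,j)$-form $g$ with $\int\langle B^{-1}g,g\rangle<\infty$, where $B=[\sqrt{-1}\Theta,\Lambda_{\omega_E}]$, it gives a solution of $\dbar v=g$ with $\|v\|^2\le\int\langle B^{-1}g,g\rangle$. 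For $(n_E,j)$-forms with $j\ge1$, curvature $\ge\omega_E$ gives $B\ge j\ge1$, so the bound is $\|v\|_{D^\times}\le\|g\|_{D^\times}$. Since $A$ has measure zero, these norms equal $\|v\|_D$ and $\|g\|_D$.

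Next we extend across $A$. We have $v\in L^2_{\mathrm{loc}}$ near $A$, because $q^{-m}\ge C>0$ locally near $A$ (as $q\to0$). The equation $\dbar v=g$ holds on $D\setminus A$ with $v,g\in L^2_{\mathrm{loc}}(D)$, so it holds on $D$: a standard cutoff argument removes closed sets of Hausdorff codimension $\ge2$, here codimension $2N$ real, for $L^2_{\mathrm{loc}}$ forms (Demailly's extension lemma). This step is where the main care lies. We must check that the weighted finiteness gives unweighted local integrability, which holds since $q^{-m}$ is large rather than small near $A$. Alternatively, we can approximate $\log q$ by smooth psh weights $\log(q+\varepsilon)$ and pass to a weak limit; but the curvature bound degenerates near $A$ then, so the complement approach is cleaner.

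Finally, we take the minimal-norm solution. The solution set is a nonempty closed affine subspace $v_0+\ker\dbar$ of the Hilbert space $L^2(D,d\mu_m)$, where $\ker\dbar$ is weakly closed. The minimal-norm solution is the orthogonal projection of $v_0$ onto $(\ker\dbar)^\perp$. It is therefore unique and linear in $g$, and its norm is at most that of the solution constructed above, namely $\le\|g\|_D$.
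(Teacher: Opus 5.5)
Your proposal is correct and follows essentially the same route as the paper: a complete K\"ahler metric on $D\setminus A$, Demailly's $L^2$ existence theorem with estimates for the non-complete metric $\omega_E$ and the weight $h_0q^{-m}$ of curvature $\ge\omega_E$ (so that the curvature operator is $\ge j\ge1$), extension of $\dbar v=g$ across $A$ using that $v$ is $L^2_{\mathrm{loc}}$ because $q$ is bounded on compact sets, and the minimal solution as the orthogonal projection onto $(\ker\dbar)^\perp$. One small slip in your opening: $D\setminus A$ is in general not weakly pseudoconvex (for instance, when $N\ge2$ psh functions extend across $A$), but your argument uses only that $D\setminus A$ is complete K\"ahler, and that is correct.
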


\begin{proof}
\emph{$D\setminus A$ is complete K\"ahler.} Let $\Phi\ge0$ be a smooth psh exhaustion function of $D$. Then $\widehat\omega_D=\omega_E+\ddc(\Phi^2)$ is a complete K\"ahler form on $D$: indeed, $\ddc(\Phi^2)\ge2\sqrt{-1}\partial\Phi\wedge\dbar\Phi$, so the exhaustion function $\Phi$ has bounded gradient. Let $\eta\colon\R\to\R$ be smooth, convex and increasing, with $\eta(t)=-\log(-t)$ for $t\le-1$. Since $\log q$ is psh (Lemma~\ref{lem:total}(b)), the form $\widehat\omega=\widehat\omega_D+\ddc\,\eta(\log q)$ is K\"ahler on $D\setminus A$. On the set where $\log q\le-1$, we have $\ddc\,\eta(\log q)\ge\sqrt{-1}\partial\tau\wedge\dbar\tau$ with $\tau=\log(-\log q)$, and $\tau\to\infty$ at $A$; hence $\widehat\omega$ is complete on $D\setminus A$.

\emph{Solution.} On $D\setminus A$, the metric $h_0q^{-m}$ of $K_E^{-1}$ has curvature $\ge\omega_E$ by \eqref{eq:positive}, so the curvature operator on $K_E^{-1}$-valued $(n_E,j)$-forms is $\ge j\ge1$ with respect to $\omega_E$. Demailly's $L^2$ existence theorem \cite[Th\'eor\`eme~4.1]{Demailly1982}, which applies on the complete K\"ahler manifold $D\setminus A$ with estimates for the (not necessarily complete) K\"ahler form $\omega_E$, gives $v$ on $D\setminus A$ such that $\dbar v=g$ and $\|v\|_{D\setminus A}\le\|g\|_D$. Since $q$ is bounded on compact sets, $v$ has $L^2_{\mathrm{loc}}$ coefficients on $D$, and the equation $\dbar v=g$ extends across the analytic set $A$ \cite[Lemme~6.9]{Demailly1982}. The minimal solution is the orthogonal projection of any solution onto the orthogonal complement of $\ker\dbar$.
\end{proof}

\begin{lemma}\label{lem:mult}
If $m\ge N+r-1$ and $a$ is holomorphic on an open set $S\subset E$ and satisfies $\int_{S'}|a|^2d\mu_m<\infty$ for all $S'\Subset S$, then $a$ vanishes to order $r$ along $A\cap S$.
\end{lemma}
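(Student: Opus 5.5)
The statement is local along $A$, so I would reduce it to a computation in a single local frame. Fix a point $y_0\in X$ with $A\cap S\ni 0_{y_0}$. Choose a coordinate ball $B\ni y_0$ in $X$ and a holomorphic frame of $E|_B$. This gives coordinates $(y,\xi)\in B\times\C^N$ near $0_{y_0}$. Shrinking, take a relatively compact product neighbourhood $S'=B'\times\{|\xi|<\varepsilon\}\Subset S$.

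On $S'$ the three ingredients of the weight are controlled as follows.
\begin{itemize}
\item By Lemma~\ref{lem:total}(e), $u$ is comparable to $|\xi|^2$.
\item Since $e^{u}$ is bounded above and below, $q^{-m}\ge C^{-1}|\xi|^{-2m}$.
\item The volume form $d\mu$ of $h_0$ is comparable to Lebesgue measure.
\end{itemize}
The hypothesis therefore gives
\[
\int_{B'\times\{|\xi|<\varepsilon\}}|a(y,\xi)|^2|\xi|^{-2m}\,d\lambda(y,\xi)<\infty .
\]

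Next expand $a$ in the fibre variable: $a(y,\xi)=\sum_{k\ge0}a_k(y,\xi)$, where $a_k$ is a homogeneous polynomial of degree $k$ in $\xi$ with coefficients holomorphic in $y$. The aim is to show $a_k\equiv0$ for $k\le r-1$; this is exactly vanishing to order $r$ along $A$, since the $\xi$-derivatives of order $<r$ at $\xi=0$ determine all mixed derivatives of order $<r$ along $A$ (the $y$-derivatives of zero functions vanish).

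To do this, use Fubini and polar coordinates in $\C^N$. Write $\xi=t\zeta$ with $|\zeta|=1$ and $0<t<\varepsilon$. The homogeneous pieces are orthogonal on each sphere $\{|\xi|=t\}$, since the circle action $\xi\mapsto e^{i\theta}\xi$ separates degrees. Hence
\[
\int_{|\xi|<\varepsilon}|a(y,\xi)|^2|\xi|^{-2m}\,d\lambda(\xi)=\sum_k c_k(y)\int_0^\varepsilon t^{2k-2m+2N-1}\,dt,\qquad c_k(y)=\int_{|\zeta|=1}|a_k(y,\zeta)|^2\,d\sigma .
\]
For each $k$ with $c_k(y)\neq0$, the $t$-integral must be finite, which forces $2k-2m+2N>0$, i.e.\ $k>m-N$. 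With $m\ge N+r-1$ this gives $k\ge r$, so $c_k(y)=0$ whenever $k\le r-1$.

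Finiteness of the full integral gives finiteness of this fibre integral for almost every $y\in B'$. So $a_k(y,\cdot)\equiv0$ for $k\le r-1$ and almost every $y$, and by holomorphy in $y$ (continuity suffices) for every $y$. Since $y_0$ was arbitrary, this proves the lemma.

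The only delicate point is the uniform comparison of $q^{-m}d\mu$ with $|\xi|^{-2m}d\lambda$ on a compact set. This is immediate from Lemma~\ref{lem:total}(e) and the smoothness of $h_0$ and of the Hermitian metric. The orthogonality of the homogeneous components on spheres is standard.
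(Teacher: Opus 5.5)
Your proposal is correct and follows essentially the same route as the paper. Both arguments compare $d\mu_m$ locally with $|\xi|^{-2m}$ times Lebesgue measure, expand $a$ in the fibre variable, use orthogonality of the pieces, and note that the radial integral diverges in degrees $\le r-1$ when $m\ge N+r-1$. The only differences are that you group terms into homogeneous components on spheres rather than monomials, and that you write out the Fubini and continuity step in the base variable, which the paper leaves implicit.
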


\begin{proof}
Locally, $d\mu_m$ is comparable to $|t|^{-2m}dV$, where $t$ is the fibre coordinate. In the expansion $a=\sum_\beta a_\beta(x)t^\beta$, the monomials are orthogonal, and $\int|t^\beta|^2|t|^{-2m}dV(t)=\infty$ if $|\beta|\le r-1$.
\end{proof}

Fix a continuous function $C_E\colon E\to[1,\infty)$ such that the $\omega_E$-ball $B(x,4/C_E(x))$ lies in a chart in which $\omega_E$, the Euclidean metric and $d\mu/dV$ are comparable with constant $C_E(x)$, and such that $q^{1/2}$ is $C_E(x)$-Lipschitz on $B(x,1/C_E(x))$.

\begin{lemma}\label{lem:pointwise}
There exists a continuous function $\kappa\colon E\to[1,\infty)$ with the following property.
Let $R\colon E\to(0,1]$ be a $C^1$ function, put $W_R=\{q<R^2\}$, and let $0<\delta\le\frac12$, $\xi\in W_{(1-\delta)R}$ and $\Lambda\ge2C_E(\xi)$. If $|d\log R|\le\Lambda$ on $B(\xi,2/\Lambda)$ and $f$ is holomorphic on $W_R\cap B(\xi,2/\Lambda)$ with norm $\le b$ there, then
\[
|f(\xi)|\le\kappa(\xi)\bigl(C_E(\xi)+\Lambda\bigr)^{n_E}\delta^{-n_E}R(\xi)^{m-n_E}\,b .
\]
\end{lemma}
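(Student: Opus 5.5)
The estimate is a sub-mean-value bound for $|f|^2$ on a small ball around $\xi$; the interesting part is where the factor $R(\xi)^{m-n_E}$ comes from. Here $b$ should be read as the weighted $L^2$ norm $\|f\|_{W_R\cap B(\xi,2/\Lambda)}$ with respect to $d\mu_m=q^{-m}d\mu$. The factor arises because on a ball of radius comparable to $\delta R(\xi)$ around $\xi$ we still have $q\lesssim R(\xi)^2$. The plan is to find a Euclidean ball $B'$ of radius $r_0\approx \delta R(\xi)/(C_E(\xi)+\Lambda)$ centred at $\xi$ that is contained in $W_R\cap B(\xi,2/\Lambda)$ and on which $q\le R(\xi)^2$ up to a constant. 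Then the plain mean-value inequality for the holomorphic function $f$ on $B'$ gives
\[
|f(\xi)|^2\le \frac{C}{r_0^{2n_E}}\int_{B'}|f|^2\,dV\le \frac{C'}{r_0^{2n_E}}\,\sup_{B'}q^{m}\int_{B'}|f|^2q^{-m}d\mu ,
\]
using that $dV$ and $d\mu$ are comparable with constant $C_E(\xi)$ on $B(\xi,4/C_E(\xi))$.

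First I would control the geometry. Since $|d\log R|\le\Lambda$ on $B(\xi,2/\Lambda)$, for $\zeta$ with $\omega_E$-distance $\le t\le 1/\Lambda$ from $\xi$ we get $R(\zeta)\ge R(\xi)e^{-\Lambda t}$. Since $q^{1/2}$ is $C_E(\xi)$-Lipschitz on $B(\xi,1/C_E(\xi))$, which contains $B(\xi,2/\Lambda)$ because $\Lambda\ge 2C_E(\xi)$, we get $q(\zeta)^{1/2}\le (1-\delta)R(\xi)+C_E(\xi)t$. Take $t=c\,\delta R(\xi)/(C_E(\xi)+\Lambda)$ with a small absolute constant $c$; note $t\le 1/\Lambda$ since $R\le1$ and $\delta\le\frac12$. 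Then $e^{-\Lambda t}\ge 1-c\delta$, so
\[
q(\zeta)^{1/2}\le (1-\delta+c\delta)R(\xi)<(1-c\delta)R(\xi)\le R(\zeta),
\]
which puts $\zeta$ in $W_R$, and also gives $q(\zeta)\le R(\xi)^2$. Converting the $\omega_E$-ball to a Euclidean one costs another factor $C_E(\xi)$, which we absorb into $\kappa(\xi)$.

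Next, in the mean-value inequality replace $\sup_{B'}q^m$ by $R(\xi)^{2m}$ and bound the integral by $b^2$. Taking square roots gives
\[
|f(\xi)|\lesssim r_0^{-n_E}R(\xi)^m b\lesssim (C_E(\xi)+\Lambda)^{n_E}\delta^{-n_E}R(\xi)^{m-n_E}b .
\]
The constants depending on the chart comparability, namely powers of $C_E(\xi)$, dimension constants, and the comparison between $|f|$ measured with respect to $h_0$ or $\omega_E$ and the Euclidean norm, are collected into a continuous $\kappa(\xi)$. If $f$ is vector-valued, the same argument applies componentwise.

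The main obstacle is the bookkeeping in the geometric step: making the radius scale like $\delta R(\xi)$ rather than $\delta$, so that the smallness of $q$ on the ball yields exactly the power $R^{m}$ while the volume costs only $R^{-n_E}$. The rest is the standard subharmonicity estimate.
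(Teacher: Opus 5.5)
Your proposal is correct and is essentially the paper's proof. You take a ball around $\xi$ of radius comparable to $\delta R(\xi)/(C_E(\xi)+\Lambda)$, use the Lipschitz bound on $q^{1/2}$ and the bound on $d\log R$ to place it inside $W_R\cap B(\xi,2/\Lambda)$ with $q\le R(\xi)^2$, and then apply the mean value inequality on a Euclidean ball shrunk by $C_E(\xi)$ with $q^{-m}\ge R(\xi)^{-2m}$; the paper's radius $\frac{\delta}{4}\min\{R(\xi)/C_E(\xi),1/\Lambda\}$ differs from your $c\,\delta R(\xi)/(C_E(\xi)+\Lambda)$ only by a harmless constant factor.
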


\begin{proof}
Put $t=\frac\delta4\min\{R(\xi)/C_E(\xi),1/\Lambda\}$. For $y\in B(\xi,t)$, we have $q(y)^{1/2}<(1-\frac34\delta)R(\xi)$ and $R(y)\ge(1-\frac\delta4)R(\xi)$, so $B(\xi,t)\subset W_R\cap B(\xi,2/\Lambda)$ and $q\le R(\xi)^2$ there.
The mean value inequality on a Euclidean ball of radius $t/C_E(\xi)$ gives $|f(\xi)|\le C(\xi)t^{-n_E}R(\xi)^mb$, and $t^{-1}\le4\delta^{-1}(C_E(\xi)+\Lambda)R(\xi)^{-1}$.
\end{proof}

\subsubsection*{Splitting}

\begin{lemma}[Additive splitting]\label{lem:linsplit}
Let $m\ge\max\{c+1,N+r-1\}$, and let $D=D_A\cup D_B\subset E$ be weakly pseudoconvex, where $D_A,D_B$ are open, $\overline{D_B}$ is compact and disjoint from $A$, and $D_C=D_A\cap D_B$. Let $\chi\in C^\infty(E,[0,1])$ satisfy $\chi=0$ near $D_A\setminus D_B$ and $\chi=1$ near $D_B\setminus D_A$, and assume that $\supp d\chi$ is compact, disjoint from $A$, and satisfies $\supp d\chi\cap D\subset D_C$. For a bounded holomorphic map $c\colon D_C\to\C^k$, let $v_c$ be the minimal solution of $\dbar v=c\,\dbar\chi$ on $D$, and put $a_c=v_c-\chi c$ on $D_A$ and $b_c=v_c+(1-\chi)c$ on $D_B$. Then $a_c$ and $b_c$ are holomorphic, $b_c-a_c=c$, $a_c$ vanishes to order $r$ along $A$, and \[
\|a_c\|_{D_A}+\|b_c\|_{D_B}\le C_0\sup|c|,\qquad C_0=2\,\mu_m(D_B)^{1/2}+2\,\|\dbar\chi\|_{D},
\]
where $\mu_m(D_B)=\int_{D_B}d\mu_m<\infty$. Moreover, holomorphic dependence on parameters is preserved.
\end{lemma}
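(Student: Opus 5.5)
The plan is to follow the classical Cousin-I splitting through the $\dbar$-equation, using Lemma~\ref{lem:L2} for the weighted $L^2$ solution and Lemma~\ref{lem:mult} for the interpolation along $A$.

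\emph{Data of the $\dbar$-problem.} Regard $c\,\dbar\chi$ as a form on all of $D$ by extending it by zero. On $D_C$ it is smooth. Since $\supp d\chi\cap D\subset D_C$, the form vanishes on a neighbourhood of $D\setminus D_C$ inside $D$. It is therefore a smooth $\dbar$-closed $(0,1)$-form on $D$, because $\dbar(c\,\dbar\chi)=\dbar c\wedge\dbar\chi=0$ when $c$ is holomorphic. Its weighted norm satisfies
\[
\|c\,\dbar\chi\|_D\le\sup|c|\,\|\dbar\chi\|_D .
\]
The right-hand side is finite because $\supp d\chi$ is compact and disjoint from $A$, so $q^{-m}$ is bounded there. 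Lemma~\ref{lem:L2} then gives the minimal solution $v_c$ with
\[
\|v_c\|_D\le\sup|c|\,\|\dbar\chi\|_D .
\]
The map $c\mapsto v_c$ is linear, since it is the composition of the linear map $c\mapsto c\,\dbar\chi$ with the minimal-solution operator.

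\emph{Holomorphy and the jump relation.} One must first check that $\chi c$ makes sense on $D_A$ (extended by $0$) and $(1-\chi)c$ on $D_B$ (extended by $0$).
\begin{itemize}
\item For $\chi c$ on $D_A$: on $D_A\setminus D_B$ we have $\chi=0$ near this set, so the zero extension of $\chi c$ is smooth. Then $\dbar a_c=c\,\dbar\chi-c\,\dbar\chi=0$ on $D_A$.
\item For $(1-\chi)c$ on $D_B$: $\chi=1$ near $D_B\setminus D_A$, so this extension is smooth as well, and $\dbar b_c=c\,\dbar\chi-c\,\dbar\chi=0$.
\end{itemize}
On $D_C$ we get $b_c-a_c=(1-\chi)c+\chi c=c$.

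\emph{Vanishing along $A$.} Near $A\cap D_A$ we have $A\cap\overline{D_B}=\varnothing$, so $\chi=0$ near $A$ and $a_c=v_c$ there. The function $a_c$ is holomorphic on $D_A$, and locally near $A$ it has finite $d\mu_m$-integral because $\|v_c\|_D<\infty$. Since $m\ge N+r-1$, Lemma~\ref{lem:mult} shows that $a_c$ vanishes to order $r$ along $A$.

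\emph{Norm estimate.} The triangle inequality gives the two bounds
\[
\|a_c\|_{D_A}\le\|v_c\|_D+\|\chi c\|_{D_C}\le\sup|c|\,\|\dbar\chi\|_D+\sup|c|\,\mu_m(D_B)^{1/2},
\]
using $\chi c=0$ off $D_C\subset D_B$, and similarly
\[
\|b_c\|_{D_B}\le\|v_c\|_D+\sup|c|\,\mu_m(D_B)^{1/2}.
\]
Here $\mu_m(D_B)<\infty$ because $\overline{D_B}$ is compact and disjoint from $A$, so $q^{-m}$ is bounded on it. Adding the two bounds gives $C_0$.

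\emph{Holomorphic parameters.} This is the main subtle point, since minimal $L^2$ solutions are defined by projection. If $c=c_\zeta$ depends holomorphically on $\zeta$, then $v_{c_\zeta}$ is the image of the holomorphic Banach-valued map $\zeta\mapsto c_\zeta\,\dbar\chi\in L^2(D,d\mu_m)$ under a bounded linear operator. Its derivative $\partial v/\partial\bar\zeta$ is therefore $0$ in $L^2$. Weak holomorphy together with the local $\dbar$-regularity in $x$ then gives joint holomorphy of $a_c$ and $b_c$, by a standard argument via Cauchy integrals in $\zeta$.
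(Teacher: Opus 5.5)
Your proposal is correct and follows the paper's proof step by step. You use the same ingredients: the zero extension of $c\,\dbar\chi$ as smooth $\dbar$-closed data of norm at most $\sup|c|\,\|\dbar\chi\|_D$, Lemma~\ref{lem:L2} for $v_c$, the bound $\sup|c|\,\mu_m(D_B)^{1/2}$ on the cut-off terms, the identity $a_c=v_c$ near $A$ combined with Lemma~\ref{lem:mult}, and the linear, bounded dependence of the minimal solution on the data for parameters. Your treatment of parameter dependence, via Banach-valued holomorphy followed by pointwise evaluation, just spells out the paper's one-line remark in more detail.
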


\begin{proof}
The form $c\,\dbar\chi$ on $D$, extended by $0$, is smooth and $\dbar$-closed, since $\supp d\chi\cap D\subset D_C$. Its norm is at most $\sup|c|\,\|\dbar\chi\|_D$, which is finite because $\supp d\chi$ is compact and disjoint from $A$, so that $q$ is bounded below on it.
Similarly, $\chi c$ and $(1-\chi)c$ extend from $D_C$ to $D_A$ and to $D_B$, respectively, and are supported in $D_B$, so their norms are at most $\sup|c|\,\mu_m(D_B)^{1/2}$; here $\mu_m(D_B)<\infty$ because $\overline{D_B}$ is compact and disjoint from $A$. Lemma~\ref{lem:L2} gives $\|v_c\|_D\le\|c\,\dbar\chi\|_D$, and the estimate follows.
Near $A$, we have $a_c=v_c$, so $a_c$ vanishes to order $r$ along $A$ by Lemma~\ref{lem:mult}. The minimal solution depends linearly and boundedly on the data, which gives holomorphic dependence on parameters.
\end{proof}

The next lemma is the analogue of the splitting lemma \cite[Proposition~5.8.1]{Forstneric2017book} (a generalization of Cartan's lemma on invertible matrices) and of \cite[Theorem~5.2]{Stopar2013IJM}. It is purely formal: the geometry enters only through the hypothesis on additive splittings, which is verified for the pairs $(U_\lambda,V_\lambda)$ in Lemma~\ref{lem:radialsplit}. Since the weighted $L^2$ estimates give supremum bounds only on slightly smaller sets, we replace the implicit function theorem on a fixed Banach space used in \cite{Forstneric2017book,Stopar2013IJM} by a Newton iteration on shrinking domains.

\begin{lemma}[Splitting lemma]\label{lem:nonlin}
Let $U_j,V_j$ ($j\ge0$) be decreasing sequences of open sets with $V_0\cap A=\varnothing$, put $C_j=U_j\cap V_j$, and let $U_\infty\subset\bigcap U_j$ and $V_\infty\subset\bigcap V_j$ be open.
Let $\rho_j=\rho_\infty+(\rho_0-\rho_\infty)2^{-j}$, and let $B_j\subset\C^k$ be the ball of radius $\rho_j$ centred at $0$. Assume that every bounded holomorphic map $c$ on $C_j\times B_j$ splits as $c=b-a$ on $C_{j+1}\times B_j$, where $a$ is defined on $U_{j+1}\times B_j$ and $b$ on $V_{j+1}\times B_j$, $\sup|a|+\sup|b|\le C2^{pj}\sup|c|$ ($C\ge1$, $p\ge0$), and $a(\cdot,\zeta)$ vanishes to order $r$ along $A$. If $\gamma$ is holomorphic on $C_0\times B_0$ and $\epsilon_0=\sup|\gamma-\zeta|\le\epsilon_*:=(\rho_0-\rho_\infty)/(12C2^{p+1})$, then there exist holomorphic maps $\alpha\colon U_\infty\times B_\infty\to B_0$ and $\beta\colon V_\infty\times B_\infty\to\C^k$ such that $\gamma(x,\alpha(x,\zeta))=\beta(x,\zeta)$ on $(U_\infty\cap V_\infty)\times B_\infty$, $\sup|\alpha-\zeta|+\sup|\beta-\zeta|\le C'\epsilon_0$ with $C'=C'(C,p,\rho_0,\rho_\infty)$, and $\alpha(\cdot,\zeta)-\zeta$ vanishes to order $r$ along $A$.
\end{lemma}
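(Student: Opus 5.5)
We run a Newton-type iteration on the shrinking domains. Set $\gamma_0=\gamma$, $\alpha_0=\beta_0=\mathrm{id}$, and at step $j$ write $c_j=\gamma_j-\zeta$, a holomorphic map on $C_j\times B_j$ with $\epsilon_j=\sup|c_j|$. By hypothesis we may split $c_j=b_j-a_j$ on $C_{j+1}\times B_j$, with $a_j$ defined on $U_{j+1}\times B_j$, $b_j$ on $V_{j+1}\times B_j$, $\sup|a_j|+\sup|b_j|\le C2^{pj}\epsilon_j$, and $a_j(\cdot,\zeta)$ vanishing to order $r$ along $A$. We then set
\[
\tilde\alpha_j(x,\zeta)=\zeta+a_j(x,\zeta),\qquad \tilde\beta_j(x,\zeta)=\zeta+b_j(x,\zeta),
\]
and define $\gamma_{j+1}=\tilde\beta_j^{-1}\circ\gamma_j\circ\tilde\alpha_j$ fibrewise on $C_{j+1}\times B_{j+1}$. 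The inverse $\tilde\beta_j^{-1}(x,\cdot)$ exists on $B_{j+1}$ by the holomorphic inverse function theorem (a Rouché-type argument for perturbations of the identity), provided $C2^{pj}\epsilon_j$ is small compared with the gap $\rho_j-\rho_{j+1}=(\rho_0-\rho_\infty)2^{-j-1}$. Cauchy estimates on that gap also control the derivatives of $a_j$ and $b_j$. The composed maps are $\alpha^{(j)}=\tilde\alpha_0\circ\cdots\circ\tilde\alpha_j$ and $\beta^{(j)}=\tilde\beta_0\circ\cdots\circ\tilde\beta_j$, and by construction they satisfy $\gamma\circ\alpha^{(j)}=\beta^{(j)}\circ\gamma_{j+1}$.

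The quadratic estimate comes from expanding the new error:
\[
\gamma_{j+1}-\zeta=\tilde\beta_j^{-1}\bigl(\zeta+a_j+c_j(\zeta+a_j)\bigr)-\zeta .
\]
Since $c_j-b_j+a_j=0$, only second-order terms remain. These are $c_j(\zeta+a_j)-c_j(\zeta)$, bounded by $\sup|dc_j|\cdot|a_j|$, and the error in $\tilde\beta_j^{-1}=\mathrm{id}-b_j+O(|b_j||db_j|)$. Cauchy estimates over the gap $2^{-j-1}(\rho_0-\rho_\infty)$ then give
\[
\epsilon_{j+1}\le K\,4^{pj}2^{j}\epsilon_j^2,\qquad K\approx \frac{C^2}{\rho_0-\rho_\infty},
\]
where the constants will be arranged so that the smallness threshold $\epsilon_*$ stated in the lemma makes the recursion converge. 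Under the ansatz $\epsilon_j\le\epsilon_0\,2^{-(p+1)j}\theta^{2^j-1}$, or more simply a geometric decay $\epsilon_{j}\le\epsilon_0 2^{-(2p+2)j}$ proved by induction using the smallness of $\epsilon_0$, the estimate propagates. In particular $\sum_j C2^{pj}\epsilon_j\le C'\epsilon_0$ is small, and this simultaneously keeps all compositions inside the balls, with $\alpha^{(j)}$ mapping into $B_0$, and keeps the inverse-function hypotheses valid at every step. The main obstacle is this bookkeeping: choosing the decay rate so that the loss $2^{pj}$ from the splitting and the $2^j$ loss from the Cauchy estimates are both absorbed, and checking that $\epsilon_*$ as given suffices. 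I would first try to prove $\epsilon_{j+1}\le\epsilon_j/2^{p+2}$ inductively, using $\epsilon_j\le\epsilon_0$ together with the threshold.

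Passage to the limit is then routine. On $U_\infty\times B_\infty$ the compositions $\alpha^{(j)}$ converge uniformly, because each factor differs from the identity by at most $C2^{pj}\epsilon_j$, a summable sequence, and Cauchy estimates give uniformly bounded Lipschitz constants for the tails. The same holds for $\beta^{(j)}$ on $V_\infty\times B_\infty$. Meanwhile $\gamma_{j+1}\to\mathrm{id}$, so in the limit $\gamma\circ\alpha=\beta$ on $(U_\infty\cap V_\infty)\times B_\infty$, together with the bound $\sup|\alpha-\zeta|+\sup|\beta-\zeta|\le C'\epsilon_0$. Finally, each $\tilde\alpha_j-\zeta=a_j$ vanishes to order $r$ along $A$. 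Composition preserves this property, since $\tilde\alpha_j(x,\cdot)$ is the identity on $A$ up to order $r$ in $x$, and so do uniform limits of holomorphic maps via Cauchy estimates. Hence $\alpha(\cdot,\zeta)-\zeta$ vanishes to order $r$ along $A$.
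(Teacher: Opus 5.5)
Your scheme is the same as the paper's: a Newton iteration on shrinking domains, with $\gamma_j-\zeta=b_j-a_j$, $\gamma_{j+1}=\tilde\beta_j^{-1}\circ\gamma_j\circ\tilde\alpha_j$, and limits of the partial compositions. Your arguments for the limit identity and for the vanishing order along $A$ are fine.

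\textbf{The gap.} It lies in the one step you defer, namely checking that the stated $\epsilon_*=(\rho_0-\rho_\infty)/(12C2^{p+1})$ suffices. With your error bound it does not. Put $\eta_j=C2^{pj}\epsilon_j$ and $s_j=\rho_j-\rho_{j+1}$.
\begin{itemize}
\item Expanding $\tilde\beta_j^{-1}=\mathrm{id}-b_j+O(|b_j|\,|db_j|)$ creates a term of size $\eta_j^2/s_j$. The term $b_j(y)-b_j(\zeta)$, which you did not list, is of the same order. This is where your $C^2$ and $4^{pj}$ come from.
\item The resulting recursion $\epsilon_{j+1}\lesssim C^22^{(2p+1)j}\epsilon_j^2/(\rho_0-\rho_\infty)$ only closes for $\epsilon_0$ of order $(\rho_0-\rho_\infty)/(C^22^{2p})$.
\item At $\epsilon_0=\epsilon_*$ it gives only $\epsilon_1\lesssim C2^{-p}\epsilon_0$, which is no decay when $C$ is large.
\item For the same reason, your first-try ansatz $\epsilon_{j+1}\le\epsilon_j/2^{p+2}$ cannot be propagated against the loss $2^{(2p+1)j}$ once $p>1$.
\end{itemize}

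\textbf{The fix.} You need not expand $\tilde\beta_j^{-1}$ at all. Since $a_j+c_j=b_j$, you have $\gamma_j\circ\tilde\alpha_j=\tilde\beta_j+Q$ with $Q=c_j(\zeta+a_j)-c_j(\zeta)$. Hence
\[
\gamma_{j+1}(\zeta)-\zeta=\tilde\beta_j^{-1}\bigl(\tilde\beta_j(\zeta)+Q\bigr)-\tilde\beta_j^{-1}\bigl(\tilde\beta_j(\zeta)\bigr).
\]
If $\eta_j\le s_j/8$, Cauchy estimates on the ball of radius $\rho_{j+1}+s_j/2$ give $|\partial_\zeta b_j|\le\frac14$ and $|\partial_\zeta c_j|\le2\epsilon_j/s_j$. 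So $\tilde\beta_j^{-1}$ is $\frac43$-Lipschitz, and
\[
\epsilon_{j+1}\le\tfrac43\sup|Q|\le\tfrac83\,\epsilon_j\eta_j/s_j\le K2^{(p+1)j}\epsilon_j^2,\qquad K=\frac{6C}{\rho_0-\rho_\infty},
\]
which is linear in $C$.

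Since $K\epsilon_*=2^{-(p+2)}$, the stated threshold now suffices in either of two ways. One is the paper's $d_j=K2^{(p+1)(j+1)}\epsilon_j$, which satisfies $d_{j+1}\le d_j^2$ and $d_0\le\frac12$. The other is your own ansatz $\epsilon_j\le\epsilon_02^{-(p+2)j}$, which now propagates exactly at $\epsilon_*$. Either way you keep $\eta_j\le s_j/12$ and $\sum_j\eta_j/s_j<\infty$. These are what the inverse-function step and the uniform Lipschitz bounds on the compositions need.

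\textbf{A smaller point.} $\tilde\beta_j^{-1}$ must exist on a ball larger than $B_{j+1}$, say of radius $\rho_{j+1}+s_j/4$. This is because $\gamma_j\circ\tilde\alpha_j(B_{j+1})$ leaves $B_{j+1}$ by up to $\eta_j+\epsilon_j$.

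As written, your argument proves the lemma only with a smaller threshold. That would still suffice for Lemmas~\ref{lem:radialsplit} and~\ref{lem:gluesprays}, which use only some $\epsilon_*>0$, but it is not the statement as given.
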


\begin{proof}
Put $s_j=\rho_j-\rho_{j+1}$ and $\gamma_0=\gamma$. Given $\gamma_j$ on $C_j\times B_j$ with $\epsilon_j=\sup|\gamma_j-\zeta|$, we split $\gamma_j-\zeta=b_j-a_j$, put $\eta_j=C2^{pj}\epsilon_j$, $\alpha_j=\zeta+a_j$ and $\beta_j=\zeta+b_j$, and assume that $\eta_j\le s_j/8$. By the Cauchy estimates, $|\partial_\zeta a_j|,|\partial_\zeta b_j|\le\frac14$ on the ball of radius $\rho_{j+1}+s_j/2$; hence $\beta_j^{-1}$ exists on the ball of radius $\rho_{j+1}+s_j/4$ by the contraction principle, and $\gamma_{j+1}=\beta_j^{-1}\circ\gamma_j\circ\alpha_j$ is defined on $C_{j+1}\times B_{j+1}$. Writing $\gamma_j\circ\alpha_j=\zeta+b_j+Q$ with $|Q|\le2\epsilon_j\eta_j/s_j$, we obtain $\epsilon_{j+1}\le\frac83\epsilon_j\eta_j/s_j\le K2^{(p+1)j}\epsilon_j^2$, where $K=6C/(\rho_0-\rho_\infty)$. Hence $d_j=K2^{(p+1)(j+1)}\epsilon_j$ satisfies $d_{j+1}\le d_j^2$ and $d_0\le\frac12$, so $d_j\le2^{-2^j}$ and $\eta_j\le s_j/12$, which keeps the induction going.
Since $\sum\eta_i/s_i<\infty$, the compositions $\alpha_0\circ\dots\circ\alpha_j$ and $\beta_0\circ\dots\circ\beta_j$ have uniformly bounded Lipschitz constants, and they converge to maps $\alpha,\beta$ with the stated properties, since $\gamma\circ\alpha_0\circ\dots\circ\alpha_j=\beta_0\circ\dots\circ\beta_j\circ\gamma_{j+1}$ and $\gamma_j\to\zeta$. By the chain rule, vanishing to order $r$ along $A$ is preserved under composition with the $\alpha_i$ and under locally uniform limits.
\end{proof}

\subsection{Jet extension theorem from negative bundles to Oka manifolds}\label{sec:total}

We construct the product sprays needed in Theorem~\ref{thm:oka}.

\begin{lemma}\label{lem:vanishing}
Let $V\to U$ be a vector bundle with a section $\sigma$, put $A=\sigma^{-1}(0)$, and let $s_0$ be a dominating local spray over $f\colon U\to Y$ on a bundle $P_0\to U$. Let $\tau_1,\dots,\tau_M$ be sections of $\Hom(V,P_0)$ spanning it at every point, and let $P$ be the unit ball of $\C^M$. Then, for some neighbourhood $U'$ of $A$,
\[
H(x,\zeta)=s_0\Bigl(\sum_k\zeta_k\,\tau_k(x)\bigl(\sigma(x)\bigr)\Bigr),\qquad x\in U',\ \zeta\in P,
\]
is a product spray with core $f|_{U'}$ that is dominating over $U'\setminus A$.
\end{lemma}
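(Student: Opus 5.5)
The proof is a direct verification in three steps: well-definedness on a neighbourhood of $A$, the core condition, and domination off $A$.

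\emph{Well-definedness.} The local spray $s_0$ is defined on an open neighbourhood $W\subset P_0$ of the zero section of $P_0$. Consider the holomorphic map
\[
\Phi\colon U\times\C^M\to P_0,\qquad \Phi(x,\zeta)=\sum_k\zeta_k\,\tau_k(x)\bigl(\sigma(x)\bigr).
\]
It is fibre-preserving and satisfies $\Phi(x,\zeta)=0_x$ for $x\in A$. Hence $\Phi^{-1}(W)$ is an open set containing $A\times\overline P$. Since $\overline P$ is compact, the tube lemma gives an open neighbourhood $U'$ of $A$ with $U'\times P\subset\Phi^{-1}(W)$. On this set $H=s_0\circ\Phi$ is defined and holomorphic.

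\emph{Core.} We have $\Phi(x,0)=0_x$, so $H(x,0)=s_0(0_x)=f(x)$ by the definition of a local spray over $f$. Thus $H$ is a product spray with core $f|_{U'}$.

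\emph{Domination off $A$.} Fix $x\in U'\setminus A$ and $\zeta\in P$. Since $\Phi$ preserves fibres, the chain rule gives
\[
\partial_\zeta H(x,\zeta)=(d^{\mathrm{v}}s_0)_{\Phi(x,\zeta)}\circ\partial_\zeta\Phi(x,\zeta),
\]
where $d^{\mathrm{v}}s_0$ is the fibre differential of $s_0$. The map $\partial_\zeta\Phi(x,\zeta)\colon\C^M\to P_{0,x}$ sends $e_k$ to $\tau_k(x)(\sigma(x))$. Because $\sigma(x)\neq0$ and the $\tau_k(x)$ span $\Hom(V_x,P_{0,x})$, these vectors span $P_{0,x}$: for any $w\in P_{0,x}$, choose $T\in\Hom(V_x,P_{0,x})$ with $T\sigma(x)=w$ and write $T=\sum c_k\tau_k(x)$. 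So $\partial_\zeta H$ is surjective whenever the fibre differential of $s_0$ is surjective at the point $\Phi(x,\zeta)$.

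\emph{Main obstacle.} A dominating spray is a priori only submersive along the zero section, not at every point of $W$. To handle this, replace $W$ by the open set $W'\subset W$ where the fibre differential of $s_0$ is surjective. Surjectivity is an open condition, and $W'$ contains the zero section because $s_0$ is dominating. Running the tube-lemma argument with $W'$ in place of $W$ shrinks $U'$ so that $\Phi(U'\times P)\subset W'$, which gives domination over $U'\setminus A$. No other difficulty is expected.
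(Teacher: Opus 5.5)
Your proof is correct and follows the paper's argument: pass to the open set where the fibre derivative of $s_0$ is surjective, shrink $U$ so that $\Phi(U'\times P)$ lies in it, and combine the chain rule with the fact that $\tau\mapsto\tau(\sigma(x))$ maps $\Hom(V_x,P_{0,x})$ onto $P_{0,x}$ when $\sigma(x)\neq0$. The only difference is cosmetic: you get $U'$ from the tube lemma (compactness of $\overline P$, applied pointwise along $A$), while the paper fixes metrics and a radius function $\delta$ and defines $U'$ explicitly through the Cauchy--Schwarz bound $|\Phi(x,\zeta)|\le(\sum_k|\tau_k|^2)^{1/2}|\sigma|$.
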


\begin{proof}
The points $\eta$ at which the fibre derivative of $s_0$ is surjective form a neighbourhood $\Omega'$ of the zero section.
Choose metrics on $V$ and $P_0$ and a continuous function $\delta>0$ such that $\{|\eta|<\delta(p(\eta))\}\subset\Omega'$, and let $U'$ be the set where $(\sum_k|\tau_k|^2)^{1/2}|\sigma|<\delta$. For $x\in U'$ and $\zeta\in P$, the point $\psi_x(\zeta)=\sum_k\zeta_k\tau_k(x)(\sigma(x))$ lies in $\Omega'$ by the Cauchy--Schwarz inequality, and $\partial_\zeta H=d(s_0|_{P_{0,x}})_{\psi_x(\zeta)}\circ\psi_x$. If $\sigma(x)\ne0$, then $\tau\mapsto\tau(\sigma(x))$ maps $\Hom(V_x,P_{0,x})$ onto $P_{0,x}$, so $\psi_x$ is surjective.
\end{proof}

\begin{lemma}\label{lem:fibresubmersive}
Let $L\to X$ be a Hermitian line bundle, let $E=L\otimes\C^N$ with projection $\pi$ and zero section $A$, and let $g$ be a holomorphic map from a neighbourhood $W$ of $A$ to $Y$ such that $d(g|_{E_y})_{0_y}$ is surjective for all $y$. Let $P$ be the unit Hilbert--Schmidt ball of $\C^{N\times N}$. Then, for some neighbourhood $U\subset W$ of $A$, the map $H(\xi,\Lambda)=g(\xi+\Lambda\xi)$ on $U\times P$ is a product spray with core $g|_U$ that is dominating over $U\setminus A$.
\end{lemma}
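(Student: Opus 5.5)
The plan is to check three things for the map $H(\xi,\Lambda)=g(\xi+\Lambda\xi)$: it is defined on some $U\times P$, its core is $g|_U$, and it is dominating over $U\setminus A$. Here $E=L\otimes\C^N$ and $\Lambda\in\C^{N\times N}$ acts on the $\C^N$ factor, so $\xi\mapsto\xi+\Lambda\xi$ preserves fibres. The core condition is immediate, since $H(\xi,0)=g(\xi)$.

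\emph{Domain.} Put the Hilbert--Schmidt norm on $\C^{N\times N}$ and the product metric on $L\otimes\C^N$. For $\Lambda\in P$ we then have $|\xi+\Lambda\xi|\le 2|\xi|$. Choose a continuous $\delta>0$ on $X$ such that the fibrewise ball $\{|\eta|<\delta(\pi(\eta))\}$ lies in $W$. Since $d(g|_{E_y})_{0_y}$ is surjective and surjectivity is an open condition, we may shrink $\delta$ so that on this ball $d(g|_{E_y})_\eta$ is surjective as well. Then set $U=\{|\xi|<\delta(\pi(\xi))/2\}$. For $\xi\in U$ and $\Lambda\in P$, the point $\xi+\Lambda\xi$ lies in the good ball, so $H$ is holomorphic on $U\times P$.

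\emph{Domination.} Fix $\xi\in U\setminus A$ with $y=\pi(\xi)$. Write $\xi=\ell\otimes c$ with $\ell\in L_y\setminus\{0\}$ and $c\in\C^N\setminus\{0\}$; this is possible because $L$ is a line bundle. The $\Lambda$-derivative of $H$ at $(\xi,\Lambda)$ is
\[
\partial_\Lambda H(\xi,\Lambda)(M)=d(g|_{E_y})_{\xi+\Lambda\xi}(M\xi),\qquad M\in\C^{N\times N},
\]
where $M\xi=\ell\otimes Mc$. Since $c\ne0$, the map $M\mapsto Mc$ is onto $\C^N$, so $M\mapsto M\xi$ is onto the whole fibre $E_y=\ell\otimes\C^N$. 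Composing with the surjective map $d(g|_{E_y})_{\xi+\Lambda\xi}$ shows that $\partial_\Lambda H(\xi,\Lambda)$ maps onto $T_{Y,H(\xi,\Lambda)}$.

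The only step needing any care is this rank-one observation: every nonzero vector of $L\otimes\C^N$ is decomposable, so the matrices $\Lambda$ sweep out the entire fibre. The rest is bookkeeping with the neighbourhood $U$, exactly as in Lemma~\ref{lem:vanishing}.
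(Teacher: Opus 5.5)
Your proof is correct and is essentially the paper's argument. The paper obtains the lemma by applying Lemma~\ref{lem:vanishing} with $V=P_0=\pi^*E$, the tautological section $\sigma(\xi)=\xi$, the local spray $s_0(\xi,\eta)=g(\xi+\eta)$, and the matrix units spanning $\pi^*\End E\cong W\times\C^{N\times N}$; you carry out the same shrinking-plus-surjectivity argument directly, and your decomposability remark is exactly where the line-bundle hypothesis enters, namely that $M\mapsto M\xi$ maps $\End E_y$ onto $E_y$ for $\xi\ne0$.
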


\begin{proof}
We apply Lemma~\ref{lem:vanishing} on $W$ with $V=P_0=\pi^*E$, the tautological section $\sigma(\xi)=\xi$, the local spray $s_0(\xi,\eta)=g(\xi+\eta)$ over $g$, whose fibre derivative $d(g|_{E_{\pi(\xi)}})_\xi$ is surjective after shrinking $W$, and the matrix units $\tau_{lj}=e_le_j^T$, which span $\pi^*\End E\cong W\times\C^{N\times N}$ and satisfy $\sum\Lambda_{lj}\tau_{lj}(\xi)=\Lambda\xi$.
\end{proof}

\subsubsection*{Disc bundles and convex bumps}
Fix $r\ge2$ such that $m=n_E+r-1\ge c+1$ (so that $m\ge N+r-1$), fix an open neighbourhood $U$ of $A$, and put $L_j=\{\Psi\le j\}$.

\begin{lemma}\label{lem:chi}
There exists a smooth convex increasing function $\chi\colon[0,\infty)\to[5,\infty)$ with $\chi'\to\infty$ such that
\begin{enumerate}[label=(\alph*)]
\item $\theta_{\chi\circ\rho}\ge2$ on $E\setminus U$;
\item if $\alpha\in[\frac12,2]$, $R=\alpha e^{-\chi(\rho\circ\pi)/2}$, $\rho(\pi(x))\ge1$ and $q(x)\le1$, then $R<1$, and there exists $\Lambda\ge2C_E(x)$ such that $|d\log R|\le\Lambda$ and $\rho\circ\pi\ge\rho(\pi(x))-1$ on $B(x,2/\Lambda)$ and $\kappa(x)(C_E(x)+\Lambda)^{n_E}R(x)\le1$.
\end{enumerate}
\end{lemma}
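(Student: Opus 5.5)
Both conditions only constrain $\chi$ from below, pointwise or on compact pieces $\{j\le\rho\le j+1\}$ of $X$. So I would build $\chi$ as a convex increasing majorant of a sequence of constants $M_j$, one for each slab $X_j:=\{j\le\rho\le j+1\}$, $j\ge0$. The main tool is a standard fact: given any sequence $M_j\ge5$, there is a smooth convex increasing $\chi$ on $[0,\infty)$ with $\chi\ge M_j$ on $[j,j+1]$, $\chi\ge5$, and $\chi'\to\infty$. One takes a piecewise linear convex function through suitable points with slopes increasing to $\infty$ and smooths it.

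\textbf{Condition (a).} Write $\theta_{\chi\circ\rho}=q\,e^{\chi(\rho\circ\pi)}$. Over the compact set $X_j$, the set $\{x\in E\setminus U:\pi(x)\in X_j\}$ lies at positive distance from $A$. Hence $q\ge\epsilon_j>0$ there: $q$ is continuous, positive off $A$, and $q\to\infty$ along the fibres. It therefore suffices to take $\chi\ge\log(2/\epsilon_j)$ on $[j,j+1]$, which gives (a).

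\textbf{Condition (b).} Take $\alpha\in[\frac12,2]$ and $R=\alpha e^{-\chi(\rho\circ\pi)/2}$. Since $\chi\ge5$, we get $R\le2e^{-5/2}<1$. Next, $d\log R=-\tfrac12\chi'(\rho\circ\pi)\,d(\rho\circ\pi)$ does not depend on $\alpha$.

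Fix $x$ with $\rho(\pi(x))\in[j,j+1]$, $j\ge1$, and $q(x)\le1$. Such $x$ range over the compact set $K_j=\{\pi(x)\in X_j,\ q\le1\}$. On $K_j$ the functions $C_E$ and $\kappa$ are bounded by some $c_j$. Choose $\Lambda_j\ge2c_j$ so large that the following hold:
\begin{enumerate}[label=(\roman*)]
\item $\rho\circ\pi\ge j-1$ on $B(x,2/\Lambda_j)$ for every $x\in K_j$;
\item the ball $B(x,2/\Lambda_j)$ lies in a fixed compact set $K_j'$, uniform over $x\in K_j$.
\end{enumerate}
Both are possible by uniform continuity of $\rho\circ\pi$ with respect to $\omega_E$ near the compact set $K_j$. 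Then $\rho\circ\pi\ge\rho(\pi(x))-1$ holds on the ball, since $j-1\ge\rho(\pi(x))-2$ need not suffice. To get the required bound, use balls small enough that $\rho\circ\pi$ varies by less than $1$ on them, again by uniform continuity.

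The real constraint is $|d\log R|\le\Lambda$ on the ball. On $K_j'$ we have $|d(\rho\circ\pi)|\le g_j$ and $\rho\circ\pi\le j+2$. So it suffices that $\tfrac12\chi'(j+2)\,g_j\le\Lambda$.

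\textbf{Main obstacle: circularity.} We must then also have $\kappa(x)(C_E(x)+\Lambda)^{n_E}R(x)\le1$. Since $R\le2e^{-\chi(j)/2}$, this needs $\chi(j)$ large compared to $\chi'(j+2)$. For that reason I would not fix $\Lambda$ first. Instead, set $\Lambda=\Lambda_j+\tfrac12\chi'(j+2)g_j$, and require
\[
c_j\bigl(c_j+\Lambda_j+\tfrac12 g_j\chi'(j+2)\bigr)^{n_E}\,2e^{-\chi(j)/2}\le1 .
\]
This needs $\chi(j)$ to dominate $\log\chi'(j+2)$ plus constants. A convex function cannot have $\chi'$ growing arbitrarily faster than $\chi$, but only logarithmic control is needed here.

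So I would choose $\chi$ piecewise linear with slopes $s_j$ on $[j,j+1]$, increasing to $\infty$ slowly, and with large values. Concretely, $\chi(t)=B+\sum$ of slopes, where $s_{j+2}$ is chosen after $\chi(j)$ is known and satisfies the displayed inequality. This is a recursive choice: at stage $j$, first pick $s_j\ge\max(s_{j-1},j)$, and also large enough that $\chi(j+1)$ exceeds the constants from (a) and the absolute part of (b). Then bound $s_{j+2}$ (and hence $\chi'$ near $j+2$ after smoothing) above by roughly $e^{\chi(j)/(2n_E)}$ divided by constants. These two requirements are compatible because $\chi(j)\to\infty$. Finally, smoothing perturbs slopes by a factor of at most $2$ and values by $O(1)$, which is absorbed into the constants.
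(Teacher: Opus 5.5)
There is a genuine gap in your last paragraph: the two requirements on $\chi$ are \emph{not} compatible in general, and $\chi(j)\to\infty$ does not help. You bound $\chi'(\rho\circ\pi)$ on the ball by $\chi'(j+2)$, but you bound $R(x)$ only by $2e^{-\chi(j)/2}$. With $c_j\ge1$, your displayed inequality forces the first estimate below, and convexity then gives the second:
\[
\chi'(j+2)\le\frac{2}{g_j}\,e^{\chi(j)/(2n_E)},\qquad\text{hence}\qquad\chi(j+2)\le\chi(j)+2\chi'(j+2)\le\chi(j)+\frac{4}{g_j}\,e^{\chi(j)/(2n_E)}.
\]
Thus $y_k=\chi(2k+1)$ satisfies $y_{k+1}\le G_k(y_k)$ with $G_k(y)=y+(4/g_{2k+1})e^{y/(2n_E)}$. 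Here $g_j>0$ comes only from $\rho$ and the metrics, not from $U$. So your constraints let $\chi$ grow at most like an iterated exponential.

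But (a) itself forces $\chi(\rho(y))\ge\log2-\log q(\xi)$ for every $\xi\in E_y\setminus U$. For noncompact $X$, shrink $U$ near infinity, say $U=\{|\xi|<\delta(\pi(\xi))\}$ with $\delta$ a rapidly decaying function of $\rho$. Then this lower bound at $t=2k+1$ can be made to exceed $(G_{k-1}\circ\dots\circ G_0)(k)$ for every $k$, and any $k>\chi(1)$ gives a contradiction. So for such $U$ no convex $\chi$ satisfies your constraints, however the slopes are chosen. The lower bounds coming from $c_j,\Lambda_j$ are equally arbitrary.

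Your recursion shows the same defect. The upper bound on $s_j$ is fixed at stage $j-2$ by $\chi(j-2)$. Nothing guarantees it exceeds the lower bounds on $s_j$ that come later from $\epsilon_{j+1}$, $c_{j+1}$ and $s_j\ge s_{j-1}$. Any argument that compares $\chi'$ a fixed distance ahead with the current value of $\chi$ breaks down in this way.

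What is missing is a way to make the constraint local in $t$. The paper takes $\Lambda=4C(t_x)^2\chi'(t_x)$ with $t_x=\rho(\pi(x))$ itself, so that $\rho\circ\pi$ moves by less than $1/(2\chi'(t_x))$ on $B(x,2/\Lambda)$. It then builds $\chi$ with $0<\chi''\le(\chi')^2/(2n_E)$, i.e.\ $(1/\chi')'\ge-1/(2n_E)$, which gives $\chi'\le2\chi'(t_x)$ on $[t_x-1,t_x+1/\chi'(t_x)]$.

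The last inequality in (b) then reduces to the same-point bound $\chi'(t)\le e^{\chi(t)/(2n_E)}/\Gamma(t)$. This bound places no restriction on growth. Writing $\tau=e^{-\chi/(2n_E)}$, it reads $-\tau'\le1/(2n_E\Gamma)$, while $\chi\ge b$ reads $\tau\le e^{-b/(2n_E)}$, so both are smallness conditions on $\tau$. The paper realizes them with $\chi=-2n_E\log\int_t^\infty e^{-p}$ for $p$ strictly convex and large.

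A smoothed piecewise linear function gives no control of $\chi''$ of this kind. Also, smoothing does not change slopes by a factor of at most $2$ when consecutive slopes differ by more than that. Your treatment of (a), of $R<1$, and of the condition $\rho\circ\pi\ge\rho(\pi(x))-1$ is fine.
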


\begin{proof}
Let $Z_t=\{\rho\circ\pi\le t+2,\ q\le4\}$, which is compact.
Choose a continuous increasing function $C(t)\ge1$ and a continuous decreasing function $\mu(t)\in(0,1]$ such that $|d(\rho\circ\pi)|,C_E,\kappa\le C(t)$ on $Z_t$ and $q\ge\mu(t)$ on $\{\rho\circ\pi\le t\}\setminus U$. Put $b=\max\{5,\log2-\log\mu\}$, $\Gamma=16C^3$, $\nu=e^{-b/(2n_E)}$, $s(t)=\min\{(2n_E\Gamma(t+1))^{-1},2^{-t-1}\nu(t+1)\}$, choose a smooth strictly convex function $p$ with $p\ge-\log s$, $p'\ge1$ and $p(t)\ge t^2$, and put $\tau=\int_t^\infty e^{-p}$ and $\chi=-2n_E\log\tau$. By strict convexity, $\tau<e^{-p}/p'$; hence $\chi'=2n_Ee^{-p}e^{\chi/(2n_E)}\in[2n_Ep',\,e^{\chi/(2n_E)}/\Gamma]$, so $\chi'\to\infty$. Moreover, $0<\chi''\le(\chi')^2/(2n_E)$, and $\chi\ge b$ since $\tau(t)\le\sum_{j\ge\lfloor t\rfloor}s(j)\le\nu(t)$.

(a) Off $U$, we have $q\ge\mu\ge2e^{-b}\ge2e^{-\chi}$.
(b) Let $t_x=\rho(\pi(x))$, $C=C(t_x)$ and $\Lambda=4C^2\chi'(t_x)\ge2C_E(x)$. Along curves from $x$ of length $<2/\Lambda\le1/(2C\chi'(t_x))$ that stay in $Z_{t_x}$, the function $\rho\circ\pi$ changes by less than $1/(2\chi'(t_x))\le\frac12$ and $q^{1/2}\le\frac32$, so such curves never leave $Z_{t_x}$. Since $(1/\chi')'\ge-\frac12$, we have $\chi'\le2\chi'(t_x)$ on $[t_x-1,t_x+1/\chi'(t_x)]$, so $|d\log R|\le C\chi'(t_x)\le\Lambda$ on $B(x,2/\Lambda)$. Finally, $\kappa(C_E+\Lambda)^{n_E}R\le C(8C^2\chi')^{n_E}2e^{-\chi/2}\le2C(8C^2/\Gamma)^{n_E}\le1$.
\end{proof}

For $T\ge1$, we consider functions $r$ on $E$ satisfying
\begin{equation}\label{eq:ends}
r=\theta_{\chi\circ\rho}-1\qquad\text{on }\pi^{-1}(\{\rho\ge T\}).
\end{equation}

\begin{definition}\label{def:bump}
A \emph{convex bump} consists of smooth psh functions $r_D\le r_U$ on $E$ satisfying \eqref{eq:ends}, with $r_D\ge-2$, $r_U=-1$ on $A$, and $\{r_D\le1\}\cap\pi^{-1}(K)$ compact for every compact set $K\subset X$, together with a chart $z\colon\Omega\to\{|z|<3\}$, where $\Omega\subset\pi^{-1}(\{\rho<T+1\})\setminus A$, in which $r_U,r_D$ are strictly convex, such that $r_U=r_D$ outside $\{|z|<1\}$. For $0<\lambda\le\frac14$, put $U_\lambda=\{r_U<\lambda\}$, $D_\lambda=\{r_D<\lambda\}$, $V_\lambda=D_\lambda\cap\{|z|^2<4+\lambda\}$ and $C_\lambda=U_\lambda\cap V_\lambda$, and define $\chi_\Omega=h(|z|^2)$ on $\Omega$ and $\chi_\Omega=0$ on $E\setminus\Omega$, where $h\in C^\infty(\R,[0,1])$ satisfies $h=1$ on $(-\infty,1]$ and $h=0$ on $[2,\infty)$.
\end{definition}

The pair $(U_\lambda,V_\lambda)$ plays the role of a Cartan pair \cite[Definition~5.7.1]{Forstneric2017book} or of a $1$-convex Cartan pair \cite[Definition~5.1]{Stopar2013IJM}. Of the defining properties of a Cartan pair, we use only the separation property (Lemma~\ref{lem:pairs}(c)), which gives the cut-off function in the additive splitting, and the convexity of $V_\lambda$ and $C_\lambda$ (Lemma~\ref{lem:pairs}(a)), which is needed for approximation and for transition maps.
The Stein property of $D_\lambda$, which in \cite[Lemma~5.8.2]{Forstneric2017book} yields solutions of the $\dbar$-equation with bounds, is replaced by the weak pseudoconvexity of $D_\lambda$ (Lemma~\ref{lem:pairs}(b)) and the weighted $L^2$ estimates of Lemma~\ref{lem:L2}; as in \cite[Proposition~4.5]{Stopar2013IJM}, the $\dbar$-data are supported away from $A$.

\begin{lemma}\label{lem:pairs}
\begin{enumerate}[label=(\alph*)]
\item $D_\lambda=U_\lambda\cup V_\lambda$; $V_\lambda$ and $C_\lambda=U_\lambda\cap\{|z|^2<4+\lambda\}$ are convex in the chart; $\overline{V_\lambda}\subset\Omega$ is compact.
\item $U_\lambda$ and $D_\lambda$ are weakly pseudoconvex.
\item On $D_\lambda$: $\{\chi_\Omega<1\}\subset U_\lambda$, $\{\chi_\Omega>0\}\subset V_\lambda$, $\supp d\chi_\Omega\subset C_\lambda$.
\item There exists $c_1>0$ such that, for $\lambda'<\lambda$, the sets $V_{\lambda'}$ and $U_{\lambda'}\cap\pi^{-1}(\{\rho\le T+1\})$ have distance $\ge c_1(\lambda-\lambda')$ from $E\setminus V_\lambda$ and $E\setminus U_\lambda$.
\end{enumerate}
\end{lemma}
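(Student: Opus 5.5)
We verify the four items directly from Definition~\ref{def:bump}, using that $r_U=r_D$ outside $\{|z|<1\}$ (in particular outside $\Omega$), that $r_D\le r_U$, and that both functions are strictly convex in the chart $z$.

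For (a), note first that $U_\lambda\subset D_\lambda$ because $r_D\le r_U$, and $V_\lambda\subset D_\lambda$ by definition. Conversely, take $x\in D_\lambda$. If $x\in\Omega$ and $|z(x)|^2<4+\lambda$, then $x\in V_\lambda$. Otherwise $x$ lies outside $\{|z|<1\}$, so $r_U(x)=r_D(x)<\lambda$ and $x\in U_\lambda$. Convexity of $V_\lambda$ is clear: inside $\{|z|<3\}$ it is the intersection of the sublevel set $\{r_D<\lambda\}$ of a convex function with a Euclidean ball. For $C_\lambda$, observe that $U_\lambda\cap\{|z|^2<4+\lambda\}\subset D_\lambda$, which gives the stated formula; this set is again a convex sublevel set intersected with a ball. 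Compactness of $\overline{V_\lambda}$ in $\Omega$ holds because $\{|z|^2\le4+\lambda\}$, with $\lambda\le\frac14$, is a compact subset of $\{|z|<3\}$.

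For (b), I would take $-\log(\lambda-r)+\Psi$ with $r=r_U$ or $r=r_D$ as a psh exhaustion, as in Lemma~\ref{lem:total}(b). Each $r$ is psh, so this function is psh. For exhaustion I use two facts. Over $\{\rho\ge T\}$, condition \eqref{eq:ends} gives $\{r<\lambda\}=\{\theta_{\chi\circ\rho}<1+\lambda\}$, where $\Psi$ is already proper on closed sublevels. Over $\{\rho\le T+1\}$, the set $\{r_D\le1\}$ is compact, and $\{r_U\le1\}\subset\{r_D\le1\}$. Hence each set $\{-\log(\lambda-r)+\Psi\le M\}$ is closed and contained in a compact subset of $\{r<\lambda\}$.

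For (c), first suppose $\chi_\Omega(x)<1$ with $x\in D_\lambda$. Then either $x\notin\Omega$ or $|z(x)|>1$, so $r_U(x)=r_D(x)<\lambda$ and $x\in U_\lambda$. Next, $\chi_\Omega>0$ forces $|z|^2<2<4+\lambda$, so on $D_\lambda$ this set lies in $V_\lambda$. Finally, $\supp d\chi_\Omega\subset\{1\le|z|^2\le2\}$. There $r_U=r_D$, so on $D_\lambda$ these points lie in both $U_\lambda$ and $V_\lambda$, that is, in $C_\lambda$.

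For (d), the idea is Lipschitz control. The function $r_U$ (and likewise $r_D$ with the cutoff $|z|^2-4$) has gradient bounded by a constant $K$ on a compact neighbourhood of $U_{1/4}\cap\pi^{-1}(\{\rho\le T+1\})$ and of $\overline{V_{1/4}}$. Given this, a point $y$ with $r(y)\ge\lambda$ is at $\omega_E$-distance at least $(\lambda-\lambda')/K$ from the set $\{r<\lambda'\}$ on that compact region, so one can take $c_1=1/K$.

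The main obstacle is making the region in (d) compact. I would handle this as follows.
\begin{itemize}
\item Over $\rho\le T+1$ it is compact by the hypothesis that $\{r_D\le1\}\cap\pi^{-1}(K)$ is compact for compact $K$.
\item Short paths must not escape this compact region. I would restrict to distances below a fixed small $\epsilon$, so that any path of that length stays inside a slightly larger compact set where the gradient bound holds.
\item For $V_\lambda$ there is also the ball constraint $|z|^2<4+\lambda$. Here I would use the Euclidean comparison in the chart: the bound $|z|^2\le4+\lambda'$ versus $4+\lambda$ gives distance at least $c(\lambda-\lambda')$, with a constant depending on the comparison between $\omega_E$ and the Euclidean metric.
\end{itemize}
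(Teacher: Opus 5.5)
Your proposal is correct and follows essentially the same route as the paper. Items (a) and (c) come from the fact that $r_U$ and $r_D$ differ only in $\{|z|<1\}$, together with $\supp d\chi_\Omega\subset\{1\le|z|^2\le2\}$; (b) uses the exhaustion $-\log(\lambda-r)+\Psi$; and (d) uses gradient bounds for $r_U$, $r_D$ and $|z|^2$ on a neighbourhood of the compact set $(\{r_D\le1\}\cap\pi^{-1}(\{\rho\le T+1\}))\cup\{|z|^2\le\frac{17}{4}\}$. Your exhaustion argument in (b) is more roundabout than needed: since $\Psi\ge0$ is already an exhaustion of $E$ and $r\ge-2$, the set $\{-\log(\lambda-r)+\Psi\le M\}$ lies directly in the compact set $\{\Psi\le M+\log(\lambda+2),\ r\le\lambda-e^{-M}\}\subset\{r<\lambda\}$.
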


\begin{proof}
Assertions (a) and (c) follow since $r_D<r_U$ only in $\{|z|<1\}$ and $\supp d\chi_\Omega\subset\{1\le|z|^2\le2\}$. For (b), use $-\log(\lambda-r)+\Psi$ as in Lemma~\ref{lem:total}(b). For (d), bound $|dr_U|,|dr_D|,|d|z|^2|$ on a neighbourhood of the compact set $(\{r_D\le1\}\cap\pi^{-1}(\{\rho\le T+1\}))\cup\{|z|^2\le\frac{17}4\}$.
\end{proof}

\begin{lemma}[Decomposition into convex bumps]\label{lem:chain}
Let $T\ge1$, let $w\ge w'$ be smooth psh functions on $X$ with $w=w'=\chi\circ\rho$ on $\{\rho\ge T\}$, put $r_s=\theta_{(1-s)w+sw'}-1$, and let $0<\epsilon\le\frac14$. Then there exist convex bumps $(r^{(i)}_U,r^{(i)}_D)$, $i=1,\dots,I$, with charts in $\{\theta_w>1-2\epsilon\}$, such that $r_1\le r^{(i)}_D\le r^{(i)}_U\le r_0$ and, for $|\lambda|\le\epsilon/2$,
\[
\{r^{(1)}_U<\lambda\}=\{r_0<\lambda\},\quad\{r^{(i)}_D<\lambda\}=\{r^{(i+1)}_U<\lambda\},\quad\{r^{(I)}_D<\lambda\}=\{r_1<\lambda\}.
\]
\end{lemma}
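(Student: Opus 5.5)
Following the Stein case \cite[Chap.~5]{Forstneric2017book}, I would interpolate between $r_0$ and $r_1$ through the path $r_s=\theta_{(1-s)w+sw'}-1$ and cut this path into finitely many convex bumps. Since $w\ge w'$ and both are psh, every $r_s$ is psh (Lemma~\ref{lem:total}(b)), and $r_s$ is nonincreasing in $s$. All $r_s$ coincide on $\pi^{-1}(\{\rho\ge T\})$, where they equal $\theta_{\chi\circ\rho}-1$, so \eqref{eq:ends} holds. On $A$ we have $\theta=0$, hence $r_s=-1$ there. The set of points where the $r_s$ differ and where $|r_s|$ is small lies in the compact set $K=\pi^{-1}(\{\rho\le T\})\cap\{\theta_w\in[1-2\epsilon,1+2\epsilon]\}$, which is disjoint from $A$. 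By Lemma~\ref{lem:total}(d), $\theta_{w}$ has no critical points off $A$, and the same holds for every $\theta_{(1-s)w+sw'}$. Since $\theta_{(\cdot)}$ is strictly psh on $E\setminus A$, near each point of $K$ it is strictly convex in suitable holomorphic coordinates (Narasimhan's lemma).

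\emph{Step 1 (localization).} Cover $K$ by finitely many charts $z_k\colon\Omega_k\to\{|z|<3\}$ with $\Omega_k\subset\pi^{-1}(\{\rho<T+1\})\cap\{\theta_w>1-2\epsilon\}$ such that every $r_s$ is strictly convex on each $\Omega_k$. Strict convexity is a $C^2$-open condition, so a single chart works for all $s\in[0,1]$ by compactness of $[0,1]\times K$. Choose a partition of unity $\{\vartheta_k\}$ subordinate to $\{|z_k|<1\}$ with $\sum\vartheta_k=1$ near $K$.

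\emph{Step 2 (small steps).} Subdivide $[0,1]$ into $0=s_0<\dots<s_L=1$. For the passage from $s_{l-1}$ to $s_l$, define intermediate functions that move one chart at a time:
\[
\rho_{l,j}=r_{s_{l-1}}+\Bigl(\sum_{k\le j}\vartheta_k\Bigr)\bigl(r_{s_l}-r_{s_{l-1}}\bigr).
\]
Successive functions $\rho_{l,j-1}\ge\rho_{l,j}$ then differ only in $\{|z_j|<1\}$. Take $r_U^{(i)}=\rho_{l,j-1}$ and $r_D^{(i)}=\rho_{l,j}$.

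If the mesh is fine, then $r_{s_l}-r_{s_{l-1}}$ is $C^2$-small, so these functions remain strictly convex on $\Omega_j$ and remain psh. They are psh globally because they agree with $r_{s}$ outside $\bigcup_k\{|z_k|<1\}$ and are strictly convex inside. They satisfy $r_1\le r_D\le r_U\le r_0$, and the remaining properties in Definition~\ref{def:bump} ($r_D\ge-2$, $r_U=-1$ on $A$, compactness over compacts) are inherited from $\theta$.

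\emph{Step 3 (main obstacle: matching the sublevel sets).} The required equalities of sublevel sets for $|\lambda|\le\epsilon/2$ are the delicate point, because consecutive bumps must share their sets exactly, not merely their functions. The identity $r_D^{(i)}=r_U^{(i+1)}$ already holds by construction. The endpoint identities hold because $\rho_{1,0}=r_0$ and $\rho_{L,J}=r_1$. The remaining concern is the region where the partition of unity does not sum to $1$, namely away from $K$. There I would use that all $r_s$ coincide on $\pi^{-1}(\{\rho\ge T\})$. On $\pi^{-1}(\{\rho\le T\})\setminus K$, every $|r_s|>2\epsilon$ if $K$ is slightly enlarged, or else this holds by monotonicity in $s$. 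Consequently the modification does not affect $\{\cdot<\lambda\}$ for $|\lambda|\le\epsilon/2$.

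I would therefore choose $\sum\vartheta_k=1$ on a neighbourhood of $K'=\pi^{-1}(\{\rho\le T\})\cap\{|r_0|\le\epsilon\}\cup\{|r_1|\le\epsilon\}$. Outside this set, the interpolated functions lie between $r_1$ and $r_0$, and both of those are either $>\epsilon$ or $<-\epsilon$ there. The case where $r_1<-\epsilon<\epsilon<r_0$ at the same point needs care. I would rule it out by using the continuity of $s\mapsto r_s$ together with a sufficiently fine mesh, and by applying the cut-off condition separately on each subinterval.
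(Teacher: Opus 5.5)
\textbf{The gap is in Step~1.} You claim that a single chart works for all $s\in[0,1]$, and this does not follow. Narasimhan's lemma makes \emph{one} strictly psh function strictly convex near a point. It does so by a quadratic holomorphic change of coordinates adapted to that function's holomorphic Hessian at the point. Openness of strict convexity then gives a chart centred at $x_*$ in which $r_s$ is strictly convex only for $s$ near $s_*$.

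Compactness of $[0,1]\times K$ therefore yields finitely many products $(s_*-\delta,s_*+\delta)\times\{|z|<1\}$ covering it. It does not yield, at each point, a chart valid for all $s$. Such a chart need not exist: the holomorphic Hessians of the $r_s$ at $x_*$ move with $s$, and they need not be dominated by the Levi forms in one common coordinate system. Since your Step~2 uses each chart $\Omega_j$ for every subinterval $[s_{l-1},s_l]$, strict convexity of the bumps on $\Omega_j$ is not justified.

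Your set $K$ is also the wrong one. Where $|r_s|\le\epsilon$, one has $\theta_w=e^{s(w-w')\circ\pi}\,\theta_{(1-s)w+sw'}$, which can lie far above $1+2\epsilon$; in the application, $w-w'$ is large. The set $K'$ in Step~3, built from $s=0,1$ alone, also misses points where only an intermediate $r_s$ is small. The relevant compact set is $Z=\{(s,x):\rho(\pi(x))\le T,\ |r_s(x)|\le\epsilon\}\subset[0,1]\times E$.

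\textbf{The repair, which is what the paper does, is to localize in $s$.} Cover $Z$ by finitely many such products, and cover $[0,1]$ by closed intervals $I_a$ shorter than the smallest $\delta$. For $I_a$, use only the charts whose $s$-interval meets $I_a$; on these, $r_s$ is strictly convex for all $s\in I_a$. Take a partition of unity $\chi_{a,k}$ summing to $1$ near $M_a=\{x:(s,x)\in Z\ \text{for some}\ s\in I_a\}$. Then subdivide $[0,1]$ so that each $[s_l,s_{l+1}]$ lies in some $I_{a(l)}$.

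With this repair, the last function of one subinterval is no longer $r_{s_{l+1}}$. In fact this already fails in your own setup: $r^{(i)}_D=r^{(i+1)}_U$ at the junctions and $\rho_{L,J}=r_1$ hold only where $\sum_k\vartheta_k=1$. So the junction matching must be proved for sublevel sets. Off $M_{a(l)}$ and over $\{\rho\le T\}$, one has $|r_s(x)|>\epsilon$ for every $s\in[s_l,s_{l+1}]$. By continuity in $s$, the values $r_{s_l}(x)$ and $r_{s_{l+1}}(x)$ then have the same sign and modulus $>\epsilon$, and the interpolant lies between them. Over $\{\rho>T\}$ all the functions coincide.

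This is exactly what disposes of your case $r_1<-\epsilon<\epsilon<r_0$. No finer mesh is needed; the cut-off set only has to contain every point at which some intermediate $r_s$ is small.
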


\begin{proof}
This is the usual construction for noncritical strongly psh exhaustion functions \cite[Lemma~5.10.3]{Forstneric2017book}, carried out over $\{\rho\le T\}$. By Lemma~\ref{lem:total}, the functions $r_s$ are psh on $E$, strictly psh and without critical points on $E\setminus A$, equal to $-1$ on $A$, and independent of $s$ over $\{\rho\ge T\}$. Since $r_s+1=q\,e^{((1-s)w+sw')\circ\pi}$ and $w'\le w$, they are nonincreasing in $s$, and $r_{s'}\to r_s$ in $C^2$ on compact sets as $s'\to s$. (They are not affine in $s$; only $\log(r_s+1)$ is.)

\emph{Charts.} The set $Z$ of pairs $(s,x)\in[0,1]\times E$ with $\rho(\pi(x))\le T$ and $|r_s(x)|\le\epsilon$ is compact and does not meet $[0,1]\times A$. By Narasimhan's lemma, for every $(s_*,x_*)\in Z$ there exist a chart $z\colon\Omega\to\{|z|<3\}$ centred at $x_*$, with $\Omega\subset\{\theta_w>1-2\epsilon\}\cap\pi^{-1}(\{\rho<T+1\})\setminus A$, and a number $\delta>0$ such that $r_s$ is strictly convex in $z$ near $\overline\Omega$ for $|s-s_*|<2\delta$. Finitely many of the sets $(s_*-\delta,s_*+\delta)\times\{|z|<1\}$ cover $Z$; let $\delta_0$ be the smallest of the corresponding $\delta$. Cover $[0,1]$ by closed intervals $I_1,\dots,I_p$ of length $<\delta_0$. For each $a$, let $\Omega_{a,1},\dots,\Omega_{a,K_a}$ be the charts of this cover whose interval meets $I_a$. Then $r_s$ is strictly convex near each $\overline{\Omega_{a,k}}$ for $s\in I_a$, and the balls $\{|z|<1\}$ of these charts cover $M_a=\{x:(s,x)\in Z\text{ for some }s\in I_a\}$. Choose smooth functions $\chi_{a,k}\ge0$ supported in these balls such that $\sum_k\chi_{a,k}\le1$, with equality near $M_a$.

\emph{Bumps.} Since there are only finitely many functions $\chi_{a,k}$ and $r_{s'}\to r_s$ in $C^2$ on compact sets as $s'\to s$, there exists a subdivision $0=s_0<s_1<\dots<s_L=1$ such that each $[s_l,s_{l+1}]$ lies in some $I_{a(l)}$ and the functions
\[
r_{l,k}=r_{s_l}+\sum_{k'\le k}\chi_{a(l),k'}\bigl(r_{s_{l+1}}-r_{s_l}\bigr),\qquad0\le k\le K_{a(l)},
\]
are so close to $r_{s_l}$ in $C^2$ on $\bigcup_k\overline{\Omega_{a(l),k}}$ that they are strictly convex on each $\Omega_{a(l),k}$. Outside these charts, $r_{l,k}=r_{s_l}$; hence $r_{l,k}$ is psh on $E$. At each point, $r_{l,k}$ is a convex combination of $r_{s_l}$ and $r_{s_{l+1}}$, so $r_1\le r_{l,k}\le r_0$, $r_{l,k}=-1$ on $A$, and \eqref{eq:ends} holds.
The difference $r_{l,k-1}-r_{l,k}\ge0$ is supported in the ball $\{|z|<1\}$ of $\Omega_{a(l),k}$. Therefore the pairs $(r_U,r_D)=(r_{l,k-1},r_{l,k})$, $1\le k\le K_{a(l)}$, with the charts $\Omega_{a(l),k}$, ordered lexicographically in $(l,k)$, are convex bumps.

\emph{Matching.} Consecutive bumps with the same $l$ share a function, and $r_{0,0}=r_0$. It remains to compare $r_{l,K}$, where $K=K_{a(l)}$, with $r_{l+1,0}=r_{s_{l+1}}$. They agree near $M_{a(l)}$ and over $\{\rho>T\}$. At any other point $x$, we have $|r_s(x)|>\epsilon$ for all $s\in[s_l,s_{l+1}]$, so $r_{s_l}(x)$ and $r_{s_{l+1}}(x)$ have the same sign and absolute values $>\epsilon$, and $r_{l,K}(x)$ lies between them. Hence $\{r_{l,K}<\lambda\}=\{r_{s_{l+1}}<\lambda\}$ for $|\lambda|\le\epsilon/2$.
\end{proof}

\begin{lemma}[Splitting over a convex bump]\label{lem:radialsplit}
Let $(r_U,r_D)$ be a convex bump, let $\frac14\ge\lambda_+>\lambda_->0$, and let $B_0,B_\infty\subset\C^k$ be balls centred at $0$ with radii $\rho_0>\rho_\infty$. Then there exist $\epsilon_*,C'>0$ such that, for every holomorphic map $\gamma$ on $C_{\lambda_+}\times B_0$ with $\sup|\gamma-\zeta|\le\epsilon_*$, there exist holomorphic maps $\alpha\colon U_{\lambda_-}\times B_\infty\to B_0$ and $\beta\colon V_{\lambda_-}\times B_\infty\to\C^k$ such that $\gamma(x,\alpha(x,\zeta))=\beta(x,\zeta)$ on $C_{\lambda_-}\times B_\infty$, $\sup|\alpha-\zeta|+\sup|\beta-\zeta|\le C'\sup|\gamma-\zeta|$, and $\alpha(\cdot,\zeta)-\zeta$ vanishes to order $r$ along $A$.
\end{lemma}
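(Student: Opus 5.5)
The plan is to deduce the statement from the formal splitting lemma (Lemma~\ref{lem:nonlin}) by verifying its hypothesis on additive splittings for a decreasing family of pairs $(U_{\lambda_j},V_{\lambda_j})$ interpolating between $\lambda_+$ and $\lambda_-$. Concretely, set $\lambda_j=\lambda_-+(\lambda_+-\lambda_-)2^{-j}$, $U_j=U_{\lambda_j}$, $V_j=V_{\lambda_j}$, $C_j=C_{\lambda_j}$, $U_\infty=U_{\lambda_-}$, $V_\infty=V_{\lambda_-}$. Since $\overline{V_\lambda}\subset\Omega\subset E\setminus A$ (Lemma~\ref{lem:pairs}(a)), we have $V_0\cap A=\varnothing$. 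Once the additive splitting hypothesis holds with constants $C2^{pj}$, Lemma~\ref{lem:nonlin} directly gives $\alpha,\beta$ with all stated properties, $\epsilon_*$ and $C'$ depending only on $C,p,\rho_0,\rho_\infty$.

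To verify the additive hypothesis at level $j$, we apply Lemma~\ref{lem:linsplit} with $D=D_{\lambda_j'}$ for an intermediate $\lambda_j'\in(\lambda_{j+1},\lambda_j)$ (say the midpoint), $D_A=U_{\lambda_j'}$, $D_B=V_{\lambda_j'}$, and the cut-off $\chi=\chi_\Omega$. By Lemma~\ref{lem:pairs}, $D$ is weakly pseudoconvex, $D=D_A\cup D_B$, $\overline{D_B}$ is compact and disjoint from $A$, $\supp d\chi_\Omega$ is a compact subset of $\Omega$ (hence disjoint from $A$) with $\supp d\chi_\Omega\cap D\subset C_{\lambda_j'}$, and $\chi_\Omega=0$ near $D_A\setminus D_B$, $=1$ near $D_B\setminus D_A$. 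With $m=n_E+r-1\ge\max\{c+1,N+r-1\}$ this yields, for a bounded holomorphic $c$ on $C_j\times B_j$ (restricted to $C_{\lambda_j'}$), holomorphic $a,b$ with $b-a=c$, $a$ vanishing to order $r$ along $A$, holomorphic in the parameter $\zeta$, and $\|a\|_{D_A}+\|b\|_{D_B}\le C_0\sup|c|$. Note that $C_0$ is bounded uniformly in $j$, since $\mu_m(V_{\lambda})$ and $\|\dbar\chi_\Omega\|$ are controlled for $\lambda\le\frac14$.

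The main obstacle is converting these weighted $L^2$ bounds into supremum bounds on $U_{j+1}\times B_j$ and $V_{j+1}\times B_j$, with only geometric loss $2^{pj}$. On $V_{j+1}$, which is relatively compact in $\Omega$ and at distance $\ge c_1(\lambda_j'-\lambda_{j+1})\sim 2^{-j}$ from $E\setminus V_{\lambda_j'}$ (Lemma~\ref{lem:pairs}(d)), the mean value inequality on balls of radius $\sim 2^{-j}$ gives $\sup|b|\lesssim 2^{jn_E}\|b\|$ (with $q$ bounded above and below there). For $a$ on $U_{j+1}$, the same argument works over the compact part $\pi^{-1}(\{\rho\le T+1\})$ by Lemma~\ref{lem:pairs}(d); near $A$ the weight $q^{-m}$ only helps, since $q^{-m}\ge$ const. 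Over $\{\rho\ge T\}$ the sets are $\{\theta_{\chi\circ\rho}<1+\lambda\}$, noncompact, and here I would use Lemma~\ref{lem:pointwise} with $R^2=(1+\lambda)e^{-\chi\circ\rho}$ and $\delta\sim 2^{-j}$, together with Lemma~\ref{lem:chi}(b), which is designed exactly so that $\kappa(C_E+\Lambda)^{n_E}R\le1$; this produces $|a(\xi)|\lesssim\delta^{-n_E}R^{m-n_E-1}\|a\|\le C2^{jn_E}\|a\|$ uniformly on the ends (as $R<1$ and $m\ge n_E+1$). On the region $\rho\le1$ not covered by Lemma~\ref{lem:chi}(b), compactness suffices. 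Collecting, $\sup|a|+\sup|b|\le C2^{pj}\sup|c|$ with $p=n_E$, and Lemma~\ref{lem:nonlin} finishes the proof.
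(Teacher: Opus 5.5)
Your proposal is correct and follows the paper's proof essentially step by step. It uses the same dyadic levels $\lambda_j$ and applies Lemma~\ref{lem:linsplit} with the cut-off $\chi_\Omega$ and a $j$-independent constant $C_0$. It gets mean value bounds via Lemma~\ref{lem:pairs}(d) on $V_{\lambda_{j+1}}$ and over $\pi^{-1}(\{\rho\le T+1\})$, and uses Lemmas~\ref{lem:pointwise} and~\ref{lem:chi}(b) on the ends, which yields $p=n_E$ in Lemma~\ref{lem:nonlin}.

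The differences are cosmetic. You split on an intermediate level $D_{\lambda_j'}$ rather than on $D_{\lambda_j}$ itself, which is harmless but unnecessary. On the ends you should say explicitly that you work at points with $\rho(\pi(\xi))\ge T+1$, so that Lemma~\ref{lem:chi}(b) keeps $B(\xi,2/\Lambda)$ inside $\pi^{-1}(\{\rho\ge T\})$, where $U_\lambda$ really coincides with $W_R$.
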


\begin{proof}
Let $\lambda_j=\lambda_-+(\lambda_+-\lambda_-)2^{-j}$. We verify the hypothesis of Lemma~\ref{lem:nonlin} for the sets $U_{\lambda_j},V_{\lambda_j}$ with $p=n_E$. Applying Lemma~\ref{lem:linsplit} on $D_{\lambda_j}$ with $D_A=U_{\lambda_j}$, $D_B=V_{\lambda_j}$ and the cut-off function $\chi_\Omega$ (Lemma~\ref{lem:pairs}), we split $c$ with $L^2$ norms $\le C_0\sup|c|$, where $C_0=2\mu_m(V_{\lambda_+})^{1/2}+2\|\dbar\chi_\Omega\|_E$ does not depend on $j$ since $V_{\lambda_j}\subset V_{\lambda_+}$ and $\overline{V_{\lambda_+}}$ is a compact subset of $\Omega\subset E\setminus A$. On $V_{\lambda_{j+1}}$ and on $U_{\lambda_{j+1}}\cap\pi^{-1}(\{\rho\le T+1\})$, the mean value inequality on balls of radius $c_1(\lambda_j-\lambda_{j+1})$ (Lemma~\ref{lem:pairs}(d)) gives sup bounds of the form $O(2^{n_Ej})\sup|c|$. At a point $\xi\in U_{\lambda_{j+1}}$ with $\rho(\pi(\xi))\ge T+1$, we have $U_{\lambda_j}=W_{R_j}$ near $\xi$, where $R_j=(1+\lambda_j)^{1/2}e^{-\chi(\rho\circ\pi)/2}$, as well as $\xi\in W_{(1-\delta_j)R_j}$ with $\delta_j\ge(\lambda_j-\lambda_{j+1})/4$ and $q(\xi)\le1$. Lemma~\ref{lem:chi}(b) gives $\Lambda$ with $B(\xi,2/\Lambda)\subset\pi^{-1}(\{\rho\ge T\})$, and Lemma~\ref{lem:pointwise}, with $m-n_E\ge1$, then gives $|a_c(\xi)|\le C_0\delta_j^{-n_E}\sup|c|$.
\end{proof}

\subsubsection*{Gluing}
Let $Y$ be an Oka manifold with a complete Riemannian metric, and let $P_s\subset\C^M$ denote the ball of radius $s$. We use the basic Oka property with approximation for maps from Stein manifolds to $Y$ \cite[Corollary~5.4.5]{Forstneric2017book}, the polynomial convexity of compact convex sets \cite{Hormander1990}, the existence of dominating sprays over holomorphic maps from Stein manifolds \cite[Lemma~5.10.4]{Forstneric2017book}, and the homotopy extension property of $(E,\{r\le\lambda\})$ for regular values $\lambda$. As in \cite[Sects.~5.9--5.10]{Forstneric2017book} and \cite[Sects.~6 and~8]{Stopar2013IJM}, we first glue two sprays over a pair $(U_\lambda,V_\lambda)$ and then extend across a convex bump.

\begin{lemma}[Transition maps; {\cite[Lemma~5.9.3]{Forstneric2017book}}, {\cite[Lemma~6.2]{Stopar2013IJM}}]\label{lem:transition}
Let $S$ be a Stein manifold, let $G\colon S\times P_1\to Y$ be a holomorphic map such that $\partial_\zeta G$ is surjective everywhere, let $L\subset S$ be compact, and let $L_1$ be a compact neighbourhood of $L$. Then, for every $\epsilon_1\in(0,\frac1{16}]$, there exist a neighbourhood $W$ of $L$ and $\delta_1>0$ with the following property: for every $k$ and every holomorphic map $T$ defined near $L_1\times\overline{P_{3/4}}\times\B^k$ with $\dist(T(x,\zeta,w),G(x,\zeta))<\delta_1$ there, there exists a holomorphic map $\gamma_\zeta\colon W\times P_{1/4}\times\B^k\to P_{1/2}$ such that $|\gamma_\zeta-\zeta|<\epsilon_1$ and $T(x,\gamma_\zeta(x,\zeta,w),w)=G(x,\zeta)$.
\end{lemma}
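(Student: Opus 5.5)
The approach is to reduce the statement to the implicit function theorem in the fibre variable, applied uniformly near the compact set $L_1\times\overline{P_{3/4}}$. The model for this is the proof of \cite[Lemma~5.9.3]{Forstneric2017book}.

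\emph{Step 1: a holomorphic right inverse of the vertical differential.} Since $S$ is Stein and $\partial_\zeta G(x,\zeta)\colon\C^M\to T_{Y,G(x,\zeta)}$ is surjective everywhere, the kernel $\ker\partial_\zeta G$ is a holomorphic subbundle of the trivial bundle over $S\times P_1$. Because $S\times P_1$ is Stein, this subbundle has a holomorphic complement $\Theta$. On $\Theta$, the map $\partial_\zeta G$ is an isomorphism onto $G^*T_Y$.

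\emph{Step 2: a local inverse for the modified spray.} Consider
\[
\Phi(x,\zeta,t)=G(x,\zeta+t),\qquad t\in\Theta_{(x,\zeta)}.
\]
At $t=0$ the fibre derivative of $\Phi$ in $t$ is an isomorphism. By the implicit function theorem with parameters, applied on a Stein neighbourhood of the compact set $L_1\times\overline{P_{3/4}}$, there is $\eta>0$ such that, for every $y$ within distance $\eta$ of $G(x,\zeta)$, there is a unique small $t=t(x,\zeta,y)\in\Theta$ with $\Phi(x,\zeta,t)=y$. This $t$ depends holomorphically on $(x,\zeta,y)$ and satisfies $|t|\le C\dist(y,G(x,\zeta))$. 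To make this concrete, I embed a neighbourhood of $G(L_1\times\overline{P_{3/4}})$ in $Y$ into local charts. Alternatively, I use the graph of $G$ and a holomorphic retraction, which is available because the graph is a Stein submanifold of $S\times P_1\times Y$ and so has a Stein neighbourhood.

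\emph{Step 3: solving the equation.} We need $\gamma$ with $T(x,\gamma,w)=G(x,\zeta)$. The unknown is $\gamma=\zeta+t$, and I solve for it by a fixed point or Newton iteration. The operator is
\[
t\mapsto t-\bigl(\partial_\zeta G|_\Theta\bigr)^{-1}\bigl(T(x,\zeta+t,w)-G(x,\zeta)\bigr),
\]
written in charts. Since $T$ is $\delta_1$-close to $G$ on a neighbourhood of $L_1\times\overline{P_{3/4}}\times\B^k$, the Cauchy estimates show that $\partial_\zeta T$ is $O(\delta_1)$-close to $\partial_\zeta G$ on the slightly smaller set $W\times\overline{P_{1/2}}\times\frac12\B^k$, where $W$ is a neighbourhood of $L$ with $\overline W\subset\operatorname{int}L_1$. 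For $\delta_1$ small, this makes the operator a contraction on the ball $|t|\le C\delta_1\le\epsilon_1$. The resulting solution $\gamma_\zeta$ is holomorphic in $(x,\zeta,w)$, satisfies $|\gamma_\zeta-\zeta|<\epsilon_1$, and maps into $P_{1/4+1/16}\subset P_{1/2}$ for $\zeta\in P_{1/4}$.

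The estimates are independent of $k$, because the parameter $w$ enters only through the given closeness bound. The one issue is that the Cauchy estimates in $w$ shrink the $w$-domain. I avoid this by differentiating only in $\zeta$: derivatives in $x$ and $w$ are never needed, since $w$ is a pure parameter. This keeps the full $\B^k$.

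The main obstacle is making Step 2 uniform with a fixed $\eta$, and ensuring that the splitting $\Theta$ and the chart representation hold on a fixed neighbourhood independent of $T$. Compactness of $L_1\times\overline{P_{3/4}}$ handles this once the complement $\Theta$ has been chosen globally, which uses the Stein property of $S\times P_1$.
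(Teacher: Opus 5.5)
Your proposal is correct and is essentially the paper's argument, which also follows Forstneri\v{c}'s Lemma~5.9.3. Both use a holomorphic right inverse $\Pi$ of $\partial_\zeta G$ from Theorem~B (your complement is $\Theta=\operatorname{im}\Pi$), a uniform inverse function theorem over a compact set, and a contraction with Cauchy estimates in $\zeta$ only, which makes $\delta_1$ independent of $k$ and $T$.

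The only difference is the order of the steps. The paper first writes $T(x,\zeta,w)=G(x,\eta(x,\zeta,w))$ with $\eta=\zeta+\Pi v(x,\zeta,T)$, and then inverts $\zeta\mapsto\eta(x,\zeta,w)$ by contraction on $\overline B(\zeta',2\epsilon_1)$. You instead solve $T(x,\zeta+t,w)=G(x,\zeta)$ in a single contraction, so your Step~2 is never actually used. Also, as your own closing remark shows, the $\frac12\B^k$ in Step~3 should simply be $\B^k$.
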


\begin{proof}
We follow \cite[p.~240]{Forstneric2017book}, keeping track of the parameter domain.
Let $\Pi$ be a holomorphic right inverse of $\partial_\zeta G$, which exists by Theorem~B. By the inverse function theorem, $v\mapsto G(x,\zeta+\Pi v)$ has a local inverse $y\mapsto v(x,\zeta,y)$ near $G(x,\zeta)$, uniformly for $(x,\zeta)$ near $L\times\overline{P_{1/2}}$. For sufficiently small $\delta_1$, independent of $k$ and $T$, the map $\eta=\zeta+\Pi v(x,\zeta,T)$ satisfies $T=G(x,\eta)$ and $|\eta-\zeta|<\epsilon_1$; solving $\eta(x,\gamma_\zeta,w)=\zeta'$ by the contraction principle on $\overline B(\zeta',2\epsilon_1)$ (using the Cauchy estimates) gives $\gamma_\zeta$.
\end{proof}

\begin{lemma}[Gluing lemma; cf.\ {\cite[Proposition~5.9.2]{Forstneric2017book}}, {\cite[Proposition~6.1]{Stopar2013IJM}}]\label{lem:gluesprays}
Let $(r_U,r_D)$ be a convex bump, let $\frac14\ge\lambda_1>\lambda_+>\lambda_->0$ and $k\ge0$, and let $H\colon U_{\lambda_1}\times P_1\to Y$ be a product spray that is dominating over $U_{\lambda_1}\setminus A$. Then, for every $\varepsilon'>0$, there exists $\delta'>0$ with the following property.
Let $T$ be a holomorphic map from a neighbourhood of $\overline{V_{\lambda_+}}\times\overline{P_{3/4}}\times\overline{\B^k}$ to $Y$ such that $\partial_{(\zeta,w)}T$ is surjective everywhere and $\dist(T(x,\zeta,w),H(x,\zeta))<\delta'$ on $\overline{C_{\lambda_+}}\times\overline{P_{3/4}}\times\overline{\B^k}$. Then there exist holomorphic maps $\alpha=(\alpha_\zeta,\alpha_w)\colon U_{\lambda_-}\times P'\to P_{1/2}\times\B^k$ and $\beta\colon V_{\lambda_-}\times P'\to P_{1/2}\times\B^k$, where $P'\subset\C^{M+k}$ is the ball of radius $\frac1{16}$, such that $\alpha(\cdot,v)-v$ vanishes to order $r$ along $A$, $|\alpha(x,v)-v|,|\beta(x,v)-v|<\varepsilon'$, and
\[
H'(x,v)=H(x,\alpha_\zeta(x,v))\ \text{on }U_{\lambda_-}\times P',\qquad H'(x,v)=T(x,\beta(x,v))\ \text{on }V_{\lambda_-}\times P'
\]
define a product spray $H'\colon D_{\lambda_-}\times P'\to Y$ that is dominating over $D_{\lambda_-}\setminus A$.
\end{lemma}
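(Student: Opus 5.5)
We follow the classical scheme of \cite[Proposition~5.9.2]{Forstneric2017book}. First we use Lemma~\ref{lem:transition} to write $H$ as $T$ composed with a fibre-preserving map close to the identity, and then we split that map using Lemma~\ref{lem:radialsplit}.

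\emph{Transition map.} Fix $\lambda_1>\lambda_2>\lambda_+$. Since $\overline{C_{\lambda_+}}\subset\overline{V_{\lambda_+}}\subset\Omega$ is compact, convex in the chart and disjoint from $A$, we may choose a convex (hence Stein) neighbourhood $S\subset C_{\lambda_1}\cap\Omega$ of $L_1:=\overline{C_{\lambda_2}}$ on which $H$ is dominating. Put $L:=\overline{C_{\lambda_+}}$, and apply Lemma~\ref{lem:transition} with $G:=H|_{S\times P_1}$, the roles of $G$ and $T$ being those in the lemma, and with $\epsilon_1$ to be fixed. Choosing $\delta'\le\delta_1$ and using the hypothesis $\dist(T,H)<\delta'$ (we may arrange the hypothesis on a slightly larger set by shrinking $\lambda_+$ within the gap), we obtain $\gamma_\zeta\colon W\times P_{1/4}\times\B^k\to P_{1/2}$ with
\[
T(x,\gamma_\zeta(x,\zeta,w),w)=H(x,\zeta),\qquad |\gamma_\zeta-\zeta|<\epsilon_1 .
\]
Set $\gamma(x,\zeta,w):=(\gamma_\zeta(x,\zeta,w),w)$. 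This is holomorphic on $C_{\lambda'}\times B_0$ for some $\lambda'\in(\lambda_-,\lambda_+)$ with $C_{\lambda'}\subset W$, where $B_0$ is a ball in $\C^{M+k}$ of radius $1/4$. It satisfies $\sup|\gamma-v|<\epsilon_1$ and $T\circ\gamma=H\circ\operatorname{pr}_\zeta$.

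\emph{Splitting.} Apply Lemma~\ref{lem:radialsplit} to this bump with $\lambda'$ in place of $\lambda_+$, with $\lambda_-$, and with radii $\rho_0=1/4$, $\rho_\infty=1/16$. This yields $\epsilon_*$ and $C'$, and we choose $\epsilon_1\le\min\{\epsilon_*,\varepsilon'/C'\}$, also small enough that $\alpha$ takes values in $P_{1/2}\times\B^k$ and $\beta$ takes values in the domain of $T$ (note that $\beta=\gamma\circ\alpha$ lies in $P_{1/2}\times\B^k$ where $\beta$ is defined). We obtain $\alpha$ on $U_{\lambda_-}\times P'$ and $\beta$ on $V_{\lambda_-}\times P'$ such that $\gamma\circ\alpha=\beta$ on $C_{\lambda_-}\times P'$, both are $\varepsilon'$-close to the identity, and $\alpha(\cdot,v)-v$ vanishes to order $r$ along $A$. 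On $C_{\lambda_-}\times P'$ we then have
\[
T(x,\beta(x,v))=T(x,\gamma(x,\alpha(x,v)))=H(x,\alpha_\zeta(x,v)).
\]
Hence the two formulas for $H'$ agree on the overlap, and since $D_{\lambda_-}=U_{\lambda_-}\cup V_{\lambda_-}$ by Lemma~\ref{lem:pairs}(a), they define a holomorphic map $H'$ on $D_{\lambda_-}\times P'$. At $v=0$ the map $H'$ is not literally a core extension of $H(\cdot,0)$, but it is a product spray whose core is $H'(\cdot,0)$, as required in the statement.

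\emph{Domination.} We expect this to be the main obstacle. On $V_{\lambda_-}$, $\partial_vH'=\partial T\circ\partial_v\beta$, which is surjective because $\partial_v\beta$ is close to the identity by the Cauchy estimates on the shrunken ball $P'$ and $\partial T$ is surjective. On $U_{\lambda_-}\setminus A$, $\partial_vH'=\partial_\zeta H\circ\partial_v\alpha_\zeta$, and $\partial_v\alpha_\zeta$ is close to the projection $\operatorname{pr}_\zeta$ and therefore surjective onto $\C^M$. Since $H$ is dominating over $U_{\lambda_1}\setminus A$ at all points of $P_1$, and $\alpha_\zeta$ takes values in $P_{1/2}$, the composition is surjective. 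Both surjectivity claims need a uniform Cauchy bound $|\partial_v\alpha-\mathrm{id}|\lesssim\varepsilon'$ on $P'$, which follows from the estimate on the larger ball of radius $\rho_\infty$ in Lemma~\ref{lem:radialsplit}. One delicate point is that Lemma~\ref{lem:radialsplit} is applied with sup norms over the noncompact set $U_{\lambda_-}$; this is handled inside that lemma via Lemmas~\ref{lem:pointwise} and~\ref{lem:chi}. The remaining care is to choose $\delta'$ depending only on $H$, the bump, $\lambda_\pm$ and $\varepsilon'$, and uniformly in $k$, which Lemma~\ref{lem:transition} provides.
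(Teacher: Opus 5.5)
Your route is the paper's: a transition map from Lemma~\ref{lem:transition}, the splitting $\gamma\circ\alpha=\beta$ from Lemma~\ref{lem:radialsplit}, and domination by the chain rule. However, two parameter choices are wrong as written, and the corresponding steps fail until they are corrected.

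\emph{The sets $L$ and $L_1$.} You apply Lemma~\ref{lem:transition} with $L_1=\overline{C_{\lambda_2}}$ for some $\lambda_2>\lambda_+$ and $L=\overline{C_{\lambda_+}}$. That lemma needs $T$ to be defined and $\delta_1$-close to $H$ near $L_1\times\overline{P_{3/4}}\times\B^k$. The hypothesis only gives $T$ near $\overline{V_{\lambda_+}}\times\overline{P_{3/4}}\times\overline{\B^k}$, with the estimate on $\overline{C_{\lambda_+}}\times\overline{P_{3/4}}\times\overline{\B^k}$. Over $C_{\lambda_2}\setminus\overline{C_{\lambda_+}}$ the map $T$ need not exist. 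Your parenthetical points to the fix, but the roles are inverted. Fix $\lambda'\in(\lambda_-,\lambda_+)$ at the outset and take $L=\overline{C_{\lambda'}}$ and $L_1=\overline{C_{\lambda_+}}$, with $S=C_{\lambda_1}$, which is already convex and disjoint from $A$. Fixing $\lambda'$ first also removes a circularity in your ordering. You choose $\lambda'$ so that $C_{\lambda'}\subset W$, but $W$ depends on $\epsilon_1$, and $\epsilon_1$ depends on the constants $\epsilon_*,C'$ of Lemma~\ref{lem:radialsplit} for $\lambda'$. The order must be $\lambda'\to(\epsilon_*,C')\to\epsilon_1\to(W,\delta'=\delta_1)$.

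\emph{The radius $\rho_\infty$.} With $\rho_\infty=1/16$, Lemma~\ref{lem:radialsplit} gives $\alpha,\beta$ only on $P'$ itself. The Cauchy estimates then give no uniform bound on $\partial_v\alpha$ or $\partial_v\beta$ near $\partial P'$. So surjectivity of $\partial_v\alpha_\zeta$ and of $\partial T\circ\partial_v\beta$ may fail there. Your closing appeal to a ``larger ball of radius $\rho_\infty$'' contradicts this choice. Moreover, a bound $|\partial_v\alpha-\mathrm{id}|\lesssim\varepsilon'$ is useless unless $\varepsilon'$ is small, so $\epsilon_1$ needs an absolute smallness condition too.

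The paper takes radii $\frac14$ and $\frac18$ and sets $\epsilon_1=\min\{\epsilon_*,1/(32C'),\varepsilon'/(2C')\}$. Then $\sup|\alpha-v|,\sup|\beta-v|\le\frac1{32}$ on the ball of radius $\frac18$. Since $P'$ lies at distance $\frac1{16}$ from the boundary of that ball, the Cauchy estimates put $\partial_v\alpha$ and $\partial_v\beta$ within $\frac12$ of the identity on $P'$. The values have norm $<\frac3{32}$, so they lie in $P_{1/2}\times\B^k$. The factor $2$ gives the strict bound $<\varepsilon'$ required in the statement. With these corrections, your domination argument is exactly the paper's.
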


\begin{proof}
Choose $\lambda'\in(\lambda_-,\lambda_+)$. Let $\epsilon_*,C'$ be the constants of Lemma~\ref{lem:radialsplit} for the levels $\lambda'>\lambda_-$ and the balls of radii $\frac14,\frac18$ in $\C^{M+k}$, and put $\epsilon_1=\min\{\epsilon_*,1/(32C'),\varepsilon'/(2C')\}$. Applying Lemma~\ref{lem:transition} to $G=H$ on the convex domain $S=C_{\lambda_1}$, which does not meet $A$, with $L=\overline{C_{\lambda'}}$ and $L_1=\overline{C_{\lambda_+}}$, we obtain $W$ and $\delta'=\delta_1$. Then $\gamma(x,v)=(\gamma_\zeta(x,v),w)$, where $v=(\zeta,w)$, is defined on $W\times P_{1/4}\times\B^k$, satisfies $|\gamma-v|<\epsilon_1$, and $T(x,\gamma(x,v))=H(x,\zeta)$. Lemma~\ref{lem:radialsplit} gives $\alpha$ on $U_{\lambda_-}$ and $\beta$ on $V_{\lambda_-}$ such that $\gamma\circ\alpha=\beta$ on $C_{\lambda_-}$, $\sup|\alpha-v|+\sup|\beta-v|\le C'\epsilon_1$, and $\alpha(\cdot,v)-v$ vanishes to order $r$ along $A$. By the Cauchy estimates, $\partial_v\alpha$ and $\partial_v\beta$ are within $\frac12$ of the identity on $P'$. On $C_{\lambda_-}\times P'$, we have $T\circ\beta=T\circ\gamma\circ\alpha=H(x,\alpha_\zeta)$, so $H'$ is well defined; it is dominating by the chain rule, since $H$ is dominating over $U_{\lambda_1}\setminus A$ and $T$ is dominating.
\end{proof}

\begin{lemma}[Extension across a convex bump; cf.\ {\cite[Lemma~5.10.5]{Forstneric2017book}}, {\cite[Proposition~8.3]{Stopar2013IJM}}]\label{lem:homglue}
Let $(r_U,r_D)$ be a convex bump, let $0<\lambda_0<\lambda_1\le\frac14$, where $\lambda_0$ is a regular value of $r_D$, let $F\colon E\to Y$ be a continuous map, and let $H\colon U_{\lambda_1}\times P\to Y$ be a product spray with core $F|_{U_{\lambda_1}}$ that is dominating over $U_{\lambda_1}\setminus A$. Let $K\subset\{r_U<0\}$ be compact and $\varepsilon>0$. Then there exist a continuous map $F'\colon E\to Y$ that agrees with $F$ to order $r$ along $A$ and satisfies $\dist(F',F)<\varepsilon$ on $K$, and a product spray $H'\colon D_{\lambda_0}\times P'\to Y$, $P'\subset\C^{M+\dim Y}$, with core $F'|_{D_{\lambda_0}}$ that is dominating over $D_{\lambda_0}\setminus A$.
\end{lemma}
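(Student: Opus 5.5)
The plan follows the proof of \cite[Lemma~5.10.5]{Forstneric2017book}. First I make a holomorphic map on the convex set $V_{\lambda_1}$ that approximates the core $F$ on $C_{\lambda_1}$. Then I attach a dominating spray to it, glue that spray to $H$ using Lemma~\ref{lem:gluesprays}, and finally repair $F$ off $D_{\lambda_0}$ by a homotopy extension.

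\emph{Step 1 (approximation on the bump).} Fix $\lambda_+\in(\lambda_0,\lambda_1)$ with $\lambda_+<\lambda_1$. By Lemma~\ref{lem:pairs}(a), $\overline{V_{\lambda_+}}$ and $\overline{C_{\lambda_+}}$ are compact convex sets in the chart $z$; in particular they are polynomially convex \cite{Hormander1990}. The core $F$ is holomorphic on $U_{\lambda_1}$, which contains a neighbourhood of $\overline{C_{\lambda_+}}$, and $F$ extends continuously over $\overline{V_{\lambda_+}}$. I view $F$ on a convex neighbourhood $S\Subset\{|z|<3\}$ of $\overline{V_{\lambda_+}}$ and apply the basic Oka property with approximation \cite[Corollary~5.4.5]{Forstneric2017book}. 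This gives a holomorphic map $G\colon S\to Y$ that is uniformly close to $F$ on a neighbourhood of $\overline{C_{\lambda_+}}$ and homotopic to $F|_S$ through maps that stay close to $F$ on that neighbourhood. By \cite[Lemma~5.10.4]{Forstneric2017book}, $G$ admits a dominating product spray $T_0(x,w)$ with $w\in\B^{\dim Y}$, defined on the Stein set $S$. I then set $T(x,\zeta,w):=T_0(x,w)$; since $\partial_wT$ is already surjective, $\partial_{(\zeta,w)}T$ is surjective everywhere. This $T$ is not yet close to $H(x,\zeta)$ for $\zeta\neq0$.

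I regard this as the main technical point. Following \cite[Lemma~5.10.5]{Forstneric2017book}, I will instead approximate the whole spray. On the convex (hence Stein) domain $S\times P_1$, the map $(x,\zeta)\mapsto H(x,\zeta)$ is holomorphic near $\overline{C_{\lambda_+}}\times\overline{P_{3/4}}$. Since $\overline{C_{\lambda_+}}\times\overline{P_{3/4}}$ is polynomially convex in $\overline{V_{\lambda_+}}\times\overline{P_{3/4}}$, the Oka property with approximation (with parameters, using the homotopy supplied by the contractibility of $P_1$ to $\{0\}$ to extend $H$ continuously) gives a holomorphic $\widetilde G\colon S\times P_1\to Y$ approximating $H$ on $\overline{C_{\lambda_+}}\times\overline{P_{3/4}}$. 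I then put $T(x,\zeta,w)=T_0(x,\zeta,w)$, where $T_0$ is a dominating spray over $\widetilde G$ on $S\times P_1$; its $w$-derivative is surjective, and $T(x,\zeta,0)=\widetilde G$ is $\delta'$-close to $H$.

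\emph{Step 2 (gluing).} Apply Lemma~\ref{lem:gluesprays} with $\lambda_1>\lambda_+>\lambda_-:=\lambda_0$ and $\varepsilon'$ small. This produces $H'$ on $D_{\lambda_0}\times P'$, dominating over $D_{\lambda_0}\setminus A$. Its core is $F'_0(x)=H(x,\alpha_\zeta(x,0))$ on $U_{\lambda_0}$ and $T(x,\beta(x,0))$ on $V_{\lambda_0}$. Because $\alpha(\cdot,0)$ vanishes to order $r$ along $A$, Lemma~\ref{lem:gluesprays} gives that $F'_0$ agrees with $H(\cdot,0)=F$ to order $r$ along $A$ (chain rule). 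Since $|\alpha|<\varepsilon'$, continuity of $H$ on the compact set $K\subset U_{\lambda_0}$ gives $\dist(F'_0,F)<\varepsilon/2$ on $K$ once $\varepsilon'$ is small.

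\emph{Step 3 (continuous extension).} On $U_{\lambda_0}$ the maps $F'_0$ and $F$ are joined by the homotopy $t\mapsto H(x,t\alpha_\zeta(x,0))$. On $V_{\lambda_0}$, $F'_0$ is homotopic to $F$ via $T(x,t\beta(x,0))$, then to $\widetilde G(\cdot,0)$, then back to $F$ by the homotopy from Step~1. These homotopies match on $C_{\lambda_0}$ up to a small correction, and that correction is handled as in \cite{Forstneric2017book} because everything is uniformly close there. Since $\lambda_0$ is a regular value, $(E,\overline{D_{\lambda_0}})$ has the homotopy extension property, so $F'_0$ extends to a continuous $F'\colon E\to Y$ homotopic to $F$. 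Restricting $H'$ to a slightly smaller sublevel set, or choosing $\lambda_0$ as the outer level from the start, gives the required spray over $D_{\lambda_0}$.
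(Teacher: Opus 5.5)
Your final plan is essentially the paper's proof: extend $H$ over $V_{\lambda_1}\times P_1$ by contracting the parameter to $0$ away from $U_{\lambda_1}$, approximate by the basic Oka property on a compact convex set, thicken via \cite[Lemma~5.10.4]{Forstneric2017book}, glue with Lemma~\ref{lem:gluesprays}, then combine the two homotopies over $C_\lambda$ and apply the homotopy extension property. Three details need tightening, as the paper does: fix $\varepsilon'$, and hence $\delta'$, before approximating; rescale $w\mapsto\sigma w$ so that $T$ is $\delta'$-close to $H$ for all $w\in\overline{\B^k}$, not only at $w=0$; and glue at a level $\lambda_->\lambda_0$, restricting $H'$ to $D_{\lambda_0}$ afterwards, so that the homotopy is defined on the closed set $\overline{D_{\lambda_0}}$ where the homotopy extension property applies.
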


\begin{proof}
We may assume that $P=P_1$. Put $k=\dim Y$, choose $\lambda_1>\lambda_2>\lambda_3>\lambda_0$, and put $V'=V_{\lambda_1}$. The sets $K_U=\overline{C_{\lambda_2}}\subset K_V=\overline{V_{\lambda_2}}$ are compact and convex in $V'$ (Lemma~\ref{lem:pairs}(a)), and $\overline{V_{\lambda_3}}\subset K_V$.

\emph{Tolerances.} Let $2\delta$ be less than the convexity radius of $Y$ near $F(K_U)$. Choose $\varepsilon'>0$ such that $\dist(H(x,\zeta),F(x))<\min\{\delta/2,\varepsilon\}$ for $x\in K_U\cup K$ and $|\zeta|<\varepsilon'$, let $\delta'$ be the constant given by Lemma~\ref{lem:gluesprays} for $\lambda_1>\lambda_2>\lambda_3$, $k$ and $\varepsilon'$, and put $\delta_1=\min\{\delta',\delta/2\}$.

\emph{Approximation.} Let $\hat H(x,\zeta)=H(x,\psi(x)\zeta)$, where $\psi\in C^\infty_c(U_{\lambda_1}\cap V')$ satisfies $0\le\psi\le1$ and equals $1$ near $K_U$ (so that $\psi(x)\zeta\in P_1$ for $\zeta\in P_1$), and let $\hat H=F$ elsewhere on $V'\times P_1$. The set $K_U\times\overline{P_{7/8}}$ is compact and convex, hence $\cO(V'\times P_1)$-convex. The basic Oka property gives a homotopy $\hat H_t$ from $\hat H$ to a holomorphic map $G$ on $V'\times P_1$ such that $\hat H_t$ is holomorphic near $K_U\times\overline{P_{7/8}}$ and satisfies $\dist(\hat H_t,H)<\delta_1/2$ there; put $g_t=\hat H_t(\cdot,0)$.

\emph{Thickening.} We apply \cite[Lemma~5.10.4]{Forstneric2017book} to the section $(x,\zeta)\mapsto((x,\zeta),G(x,\zeta))$ of $(V'\times P_1)\times Y\to V'\times P_1$ and the compact convex set $K_V\times\overline{P_{3/4}}$. Since this set has contractible Stein neighbourhoods, the lemma gives a spray $\hat G(x,\zeta,w)$, with $w$ near $0\in\C^k$, such that $\hat G(x,\zeta,0)=G(x,\zeta)$ and $\partial_w\hat G$ is surjective near $K_V\times\overline{P_{3/4}}\times\{0\}$. For sufficiently small $\sigma>0$, the map $T(x,\zeta,w)=\hat G(x,\zeta,\sigma w)$ is defined near $K_V\times\overline{P_{3/4}}\times\overline{\B^k}$, $\partial_wT$ is surjective there, and $\dist(T,H)<\delta_1$ on $K_U\times\overline{P_{3/4}}\times\overline{\B^k}$.

\emph{Gluing.} Lemma~\ref{lem:gluesprays} with $\lambda_+=\lambda_2$ and $\lambda_-=\lambda_3$ gives $\alpha$, $\beta$ and a spray $H''$ over $D_{\lambda_3}$ that is dominating off $A$. Its core is $f'=H(x,a(x))$ on $U_{\lambda_3}$, where $a=\alpha_\zeta(\cdot,0)$, and $f'=T(x,b(x))$ on $V_{\lambda_3}$, where $b=\beta(\cdot,0)$; moreover, $|a|,|b|<\varepsilon'$. Since $a$ vanishes to order $r$ along $A$, so does $f'-F=\int_0^1\partial_\zeta H(x,\tau a)a\,d\tau$ in local coordinates, and $\dist(f',F)<\varepsilon$ on $K\subset U_{\lambda_3}$.

\emph{Homotopy.} On $U_{\lambda_3}$, let $\Phi^U_t=H(x,(1-t)a(x))$; on $V_{\lambda_3}$, let $\Phi^V_t=T(x,(1-2t)b(x))$ for $t\le\frac12$ and $\Phi^V_t=g_{2-2t}$ for $t\ge\frac12$, which is continuous since $T(x,0)=G(x,0)=g_1(x)$. On $C_{\lambda_3}\subset K_U$, both homotopies lie within $\delta$ of $F(x)$ by the choice of $\varepsilon'$ and $\delta_1$. Hence $\Phi_t=[\Phi^U_t,\Phi^V_t]_{\chi_\Omega}$, the point dividing the minimal geodesic in the ratio $\chi_\Omega:(1-\chi_\Omega)$, is a homotopy on $\overline{D_{\lambda_0}}$ from $f'$ to $F$ (Lemma~\ref{lem:pairs}(c)). By the homotopy extension property, $\Phi_{1-t}$ extends to a homotopy on $E$ starting at $F$, whose end map $F'$ equals $f'$ on $\overline{D_{\lambda_0}}\supset A\cup K$. Finally, put $H'=H''|_{D_{\lambda_0}\times P'}$.
\end{proof}

\subsubsection*{The approximation theorem}
The sublevel set $\{\theta_w<c\}$ is the disc bundle $\{|\xi|<R(\pi(\xi))\}$, whose radius $R$ is determined by $q(R)=c\,e^{-w}$. The following approximation theorem enlarges the radius from about $e^{-w/2}$ to about $e^{-w'/2}$; it combines Lemmas~\ref{lem:chain} and~\ref{lem:homglue} and corresponds to \cite[Proposition~5.10.1]{Forstneric2017book}.

\begin{proposition}\label{prop:enlargement}
Let $T\ge1$, let $w\ge w'$ be smooth psh functions on $X$ with $w=w'=\chi\circ\rho$ on $\{\rho\ge T\}$, and let $0<\ell\le\frac12$. Let $F\colon E\to Y$ be a continuous map that is holomorphic on $W=\{\theta_w<1+\ell\}$ and admits a product spray with core $F|_W$ that is dominating over $W\setminus A$. Let $K\subset\{\theta_w<\frac12\}$ be compact and $\varepsilon>0$. Then there exist $\ell'\in(0,\frac12]$ and a continuous map $F'\colon E\to Y$ that is holomorphic on $W'=\{\theta_{w'}<1+\ell'\}$, admits a product spray with core $F'|_{W'}$ that is dominating over $W'\setminus A$, agrees with $F$ to order $r$ along $A$, and satisfies $\dist(F',F)<\varepsilon$ on $K$.
\end{proposition}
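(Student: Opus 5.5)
We combine Lemma~\ref{lem:chain} with repeated applications of Lemma~\ref{lem:homglue}. First normalise the levels. The given map is holomorphic on $W=\{\theta_w<1+\ell\}=\{r_0<\ell\}$, where $r_s=\theta_{(1-s)w+sw'}-1$ as in Lemma~\ref{lem:chain}. Choose $\epsilon\le\min\{\frac14,\ell\}$ so small that $K\subset\{\theta_w<\frac12\}=\{r_0<-\frac12\}$ lies well inside the relevant sublevel sets. Apply Lemma~\ref{lem:chain} with this $\epsilon$ to obtain convex bumps $(r^{(i)}_U,r^{(i)}_D)$, $i=1,\dots,I$, with the matching identities $\{r^{(1)}_U<\lambda\}=\{r_0<\lambda\}$, $\{r^{(i)}_D<\lambda\}=\{r^{(i+1)}_U<\lambda\}$ and $\{r^{(I)}_D<\lambda\}=\{r_1<\lambda\}$ for $|\lambda|\le\epsilon/2$.

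Next pick a strictly decreasing finite sequence $\epsilon/2\ge\mu_0>\mu_1>\dots>\mu_I>0$ such that each $\mu_i$ ($i\ge1$) is a regular value of $r^{(i)}_D$. By Sard's theorem this is possible: regular values are dense, and $r^{(i)}_D$ has no critical points off $A$ in the relevant region by Lemma~\ref{lem:total}(d). Set $F_0=F$, and restrict the given spray to $\{r^{(1)}_U<\mu_0\}\subset W$.

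Now proceed inductively. Suppose $F_{i-1}$ admits a product spray with core $F_{i-1}$ on $\{r^{(i)}_U<\mu_{i-1}\}$ that is dominating off $A$. Apply Lemma~\ref{lem:homglue} to the bump $(r^{(i)}_U,r^{(i)}_D)$ with $\lambda_1=\mu_{i-1}$, $\lambda_0=\mu_i$, the compact set $K$ and tolerance $\varepsilon/I$. Note that $K\subset\{r_0<-\frac12\}\subset\{r^{(i)}_U<0\}$ because $r^{(i)}_U\le r_0$. The lemma gives $F_i$ agreeing with $F_{i-1}$ to order $r$ along $A$, with $\dist(F_i,F_{i-1})<\varepsilon/I$ on $K$, and a product spray dominating off $A$ on $\{r^{(i)}_D<\mu_i\}=\{r^{(i+1)}_U<\mu_i\}$. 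After $I$ steps, $F'=F_I$ carries such a spray on $\{r_1<\mu_I\}=\{\theta_{w'}<1+\mu_I\}$. We then take $\ell'=\mu_I\le\frac12$. Agreement to order $r$ and the bound $\dist(F',F)<\varepsilon$ on $K$ follow by transitivity and the triangle inequality. The parameter ball grows at each step, but that is harmless.

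The main obstacle is bookkeeping rather than analysis. We must check that the levels and the matching identities of Lemma~\ref{lem:chain} are valid at every $\mu_i$, which holds because $|\mu_i|\le\epsilon/2$. We must also check that holomorphy of $F_i$ on the new sublevel set follows from $F_i$ being the core of the new spray. One point needs care: Lemma~\ref{lem:homglue} requires the spray on $U_{\lambda_1}$ of the current bump. This is guaranteed by the identity $\{r^{(i)}_D<\lambda\}=\{r^{(i+1)}_U<\lambda\}$ at level $\lambda=\mu_i$, which is exactly why each $\mu_i$ must stay within $\epsilon/2$.
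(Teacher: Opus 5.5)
Your proposal is correct and follows the paper's proof essentially step for step. Like the paper, you apply Lemma~\ref{lem:chain} with a small $\epsilon$, use Sard's theorem to pick decreasing levels $\le\epsilon/2$ that are regular for $r^{(i)}_D$, and then apply Lemma~\ref{lem:homglue} $I$ times with tolerance $\varepsilon/I$, using the matching of sublevel sets and $r^{(i)}_U\le r_0<-\frac12$ on $K$. The only blemish is your side remark that $r^{(i)}_D$ has no critical points off $A$ by Lemma~\ref{lem:total}(d); that lemma concerns $\theta_w$, not the patched functions $r^{(i)}_D$. The remark is unnecessary anyway, since Sard's theorem alone gives the required regular values.
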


\begin{proof}
Lemma~\ref{lem:chain} with $\epsilon=\ell/2$ gives convex bumps $(r^{(i)}_U,r^{(i)}_D)$, $i=1,\dots,I$, with charts in $\{\theta_w>1-\ell\}\subset\{\theta_w>\frac12\}$. By Sard's theorem, we can choose $\ell/4\ge\lambda^{(0)}>\lambda^{(1)}>\dots>\lambda^{(I)}>0$ such that $\lambda^{(i)}$ is a regular value of $r^{(i)}_D$ for $i\ge1$. Put $F_0=F$, and restrict the given spray to $\{r_0<\lambda^{(0)}\}=\{\theta_w<1+\lambda^{(0)}\}\subset W$. For $i=1,\dots,I$, we apply Lemma~\ref{lem:homglue} to the $i$-th bump with $\lambda_1=\lambda^{(i-1)}$, $\lambda_0=\lambda^{(i)}$, the set $K$ and $\varepsilon/I$; this is possible since, by the matching of levels in Lemma~\ref{lem:chain}, the previous spray is defined on $\{r^{(i)}_U<\lambda^{(i-1)}\}$, and $K\subset\{r^{(i)}_U<0\}$ since $r^{(i)}_U\le r_0<-\frac12$ on $K$. Finally, $\{r^{(I)}_D<\lambda^{(I)}\}=\{\theta_{w'}<1+\lambda^{(I)}\}$, so $F'=F_I$ and $\ell'=\lambda^{(I)}$ have the required properties.
\end{proof}

To exhaust $E$, we use the following choice of weights.

\begin{lemma}\label{lem:weights}
There exist smooth convex nondecreasing functions $\chi=\varphi_0\ge\varphi_1\ge\cdots$ and numbers $1\le t_1<t_2<\cdots$ such that $\varphi_j=\chi$ on $[t_j,\infty)$ and $L_j\subset\{\theta_{\varphi_j\circ\rho}<\frac12\}$.
\end{lemma}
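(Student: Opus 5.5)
We build the $\varphi_j$ and $t_j$ inductively; the approach is to modify $\chi$ only on a compact interval $[0,t_j]$ by lowering it so much that $L_j$ fits inside the small disc bundle. First note what the condition means. For $x\in L_j=\{\Psi\le j\}$ we have $\rho(\pi(x))\le j$ and $u(x)\le j$, so $q(x)=u e^u\le je^j$. Hence $L_j\subset\{\theta_{\varphi\circ\rho}<\tfrac12\}$ as soon as
\[
je^j\,e^{\varphi(t)}<\tfrac12\qquad\text{for all }t\in[0,j],
\]
that is, $\varphi\le -M_j:=-\log(2je^j)-1$ on $[0,j]$. So it suffices to find smooth convex nondecreasing functions $\varphi_j\le\varphi_{j-1}$ with $\varphi_j=\chi$ on $[t_j,\infty)$ and $\varphi_j\le -M_j$ on $[0,j]$. (Convexity and monotonicity of $\varphi_j$ together with convexity of $\rho$'s plurisubharmonicity make $\varphi_j\circ\rho$ psh, which is what Proposition~\ref{prop:enlargement} later needs.)

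\emph{Construction.} Given $\varphi_{j-1}$ and $t_{j-1}$, choose $t_j>\max\{t_{j-1},j\}+1$. The tangent-line construction handles the interval: let $\ell(t)=\chi(t_j)+\chi'(t_j)(t-t_j)$ be the tangent line of $\chi$ at $t_j$. Since $\chi'\to\infty$, we may take $t_j$ so large that $\chi'(t_j)(t_j-j)-\chi(t_j)\ge M_j+1$; then $\ell\le -M_j-1$ on $[0,j]$ (using $\chi\ge 5>0$ we need $\chi'(t_j)(t_j-j)\ge\chi(t_j)+M_j+1$, which holds for large $t_j$ because convexity gives $\chi(t_j)\le\chi(t_j-1)+\chi'(t_j)$, while $\chi'(t_j)(t_j-j-1)\to\infty$ dominates $\chi(t_j-1)$ once $\chi'$ is large\,---\,this is the one estimate to check carefully, and one may instead use a secant/tangent at a point $s_j<t_j$ chosen so that the line is very negative on $[0,j]$). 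Define $\psi_j=\max\{\ell, -M_j-2\}$ hmm—but we need nondecreasing convex and $\le\varphi_{j-1}$; take $\psi_j=\max\{\ell,\,\mathrm{const}\}$ with constant $\le\min(-M_j-1,\inf\varphi_{j-1})$ (note $\varphi_{j-1}$ is nondecreasing so $\inf=\varphi_{j-1}(0)$), and set $\varphi_j=$ a smooth regularized maximum of $\chi$-restricted pieces: precisely $\varphi_j=\widetilde\max\{\psi_j,\chi\text{ near }t_j\}$, i.e.\ $\varphi_j=\chi$ for $t\ge t_j$ and $\varphi_j=\psi_j$ for $t\le t_j$, smoothed by a regularized maximum in a small neighbourhood of $t_j$ (where $\chi\ge\ell$ by convexity, with equality only at $t_j$, so one first uses the tangent at a slightly smaller point to get a transversal crossing). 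A regularized maximum of convex nondecreasing functions is convex nondecreasing.

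\emph{Verifications.} $\varphi_j\le\chi$ everywhere since $\ell\le\chi$ and the constant is below $\chi$. For $\varphi_j\le\varphi_{j-1}$: on $[t_{j-1},\infty)$, $\varphi_{j-1}=\chi\ge\varphi_j$; on $[0,t_{j-1}]$ we need $\ell\le\varphi_{j-1}(0)$ there, which holds after increasing $t_j$ further (again using $\chi'(t_j)\to\infty$ so the line at $t_{j-1}$ is arbitrarily negative), and the constant was chosen below $\varphi_{j-1}(0)$. Finally $\varphi_j\le -M_j$ on $[0,j]$ by construction, giving $L_j\subset\{\theta_{\varphi_j\circ\rho}<\frac12\}$. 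The main obstacle is just arranging the smoothing near $t_j$ while keeping all the inequalities strict; everything rests on $\chi'\to\infty$ from Lemma~\ref{lem:chi}.
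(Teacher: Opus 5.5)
Your reduction to the condition $\varphi_j\le-\log(2je^j)-1$ on $[0,j]$ is fine. The underlying idea is also the one behind the paper's proof: replace $\chi$ on an initial segment by an essentially affine function of slope about $\chi'(t_j)$, and use $\chi'\to\infty$.

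\textbf{The gap is the smoothing at the junction}, which is the only delicate point of the lemma.
\begin{itemize}
\item Since $\chi$ is convex, every tangent line of $\chi$ (at $t_j$ or at any smaller point) lies below $\chi$ everywhere. So ``the tangent at a slightly smaller point'' produces no transversal crossing.
\item A regularized maximum satisfies $\widetilde{\max}\{a,b\}\ge\max\{a,b\}$, so any regularized maximum involving $\chi$ is $\ge\chi$. It either returns $\chi$, with no lowering on $[0,j]$, or it exceeds $\chi$ near the junction.
\item Exceeding $\chi$ is fatal. Because $t_j>t_{j-1}$ and $\varphi_{j-1}=\chi$ on $[t_{j-1},\infty)$, the required inequality $\varphi_j\le\varphi_{j-1}$ forces $\varphi_j\le\chi$ on the whole transition region. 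This inequality is exactly what supplies $w_j\ge w_{j+1}$ to Proposition~\ref{prop:enlargement}. Your claim ``$\varphi_j\le\chi$ since $\ell\le\chi$'' ignores the smoothing.
\item Other natural fixes fail the same way. A line that genuinely crosses $\chi$ transversally to the right of $t_j$ (slope slightly below $\chi'(t_j)$, value slightly above $\chi(t_j)$) lies above $\chi$ before the crossing. Mollifying the $C^{1,1}$ function equal to $\ell$ on $(-\infty,t_j]$ and $\chi$ on $[t_j,\infty)$ increases it, by Jensen's inequality.
\end{itemize}
What is needed is a smoothing \emph{from below} that preserves convexity and reproduces $\chi$ exactly on a half-line.

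The paper does this by cutting off the second derivative rather than the function. Take $\beta$ smooth and nondecreasing with $\beta=0$ on $(-\infty,-1]$ and $\beta=1$ on $[0,\infty)$. Set $\varphi^{(t)}=\chi-h_t$, where
$$h_t(s)=\int_s^t\int_\sigma^t\chi''(\tau)\bigl(1-\beta(\tau-t)\bigr)\,d\tau\,d\sigma \quad (s\le t),$$
and $h_t=0$ on $[t,\infty)$. This gives the following properties.
\begin{itemize}
\item $(\varphi^{(t)})''=\chi''\beta(\cdot-t)\ge0$ and $(\varphi^{(t)})'\ge\chi'\ge0$.
\item $\varphi^{(t)}\le\chi$, with equality on $[t,\infty)$.
\item $\varphi^{(t)}$ is affine on $(-\infty,t-1]$, so it is a smoothed tangent line.
\item $h_t$ increases with $t$, so $\varphi^{(t)}$ decreases in $t$. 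Hence $\varphi_j\ge\varphi_{j+1}$ is automatic once $t_j<t_{j+1}$, with no auxiliary constant or extra bookkeeping.
\end{itemize}

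The estimate you flagged is true, but your domination argument does not prove it, since both $\chi'(t_j)(t_j-j-1)$ and $\chi(t_j-1)$ tend to infinity. A correct proof: for $t\ge j+1$, monotonicity of $\chi'$ gives $\chi(t)\le\chi(j)+\chi'(j+1)+\chi'(t)(t-j-1)$. Hence
$$\chi'(t)(t-j)-\chi(t)\ge\chi'(t)-\chi'(j+1)-\chi(j)\to\infty.$$
The paper uses the analogous bound $h_t(s)\ge\chi'(t-1)-\chi'(s+1)$ for $s\le t-2$.
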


\begin{proof}
Let $\beta\colon\R\to[0,1]$ be a smooth nondecreasing function with $\beta=0$ on $(-\infty,-1]$ and $\beta=1$ on $[0,\infty)$. For $t\ge1$, put $k_t(\tau)=\chi''(\tau)(1-\beta(\tau-t))$, $h_t(s)=\int_s^t\int_\sigma^tk_t$ (with $h_t=0$ on $[t,\infty)$), and $\varphi^{(t)}=\chi-h_t$. Then $(\varphi^{(t)})''=\chi''\beta(\cdot-t)\ge0$, $(\varphi^{(t)})'=\chi'+\int_s^tk_t\ge0$, $\varphi^{(t)}=\chi$ on $[t,\infty)$, $\varphi^{(t)}$ decreases in $t$, and for $s\le t-2$, $h_t(s)\ge\chi'(t-1)-\chi'(s+1)\to\infty$. We choose $t_j$ inductively so large that $L_j\subset\{\theta_{\varphi^{(t_j)}\circ\rho}<\frac12\}$, and put $\varphi_j=\varphi^{(t_j)}$.
\end{proof}

\begin{proof}[Proof of Theorem~\ref{thm:oka}]
We may assume that $r\ge2$ and $n_E+r-1\ge c+1$. Let $w_j=\varphi_j\circ\rho$ (Lemma~\ref{lem:weights}); then $w_j\ge w_{j+1}$ and $w_j=w_{j+1}=\chi\circ\rho$ on $\{\rho\ge t_{j+1}\}$. By Lemma~\ref{lem:chi}(a), we have $\{\theta_{w_0}<\frac32\}\subset U$; thus $f$ is holomorphic on this set and admits the given spray there.
Applying Proposition~\ref{prop:enlargement} successively to $w_j\ge w_{j+1}$, with $K=L_j$ ($K=\varnothing$ for $j=0$) and $\varepsilon=2^{-j}$, we obtain continuous maps $F_j\colon E\to Y$ with $F_0=f$ that are holomorphic on $\{\theta_{w_j}<1+\ell_j\}\supset L_j$, agree with $f$ to order $r$ along $A$, and satisfy $\dist(F_{j+1},F_j)<2^{-j}$ on $L_j$. Since every compact subset of $E$ lies in some $L_j$, the maps $F_j$ converge locally uniformly to a holomorphic map $\tilde f\colon E\to Y$, which agrees with $f$ to order $r$ along $A$ by the Cauchy estimates.
\end{proof}

\section{Proof of Gromov's conjecture}\label{sec:proofmain}

In this section, we first apply Corollary~\ref{cor:criterion} to
weakly pseudoconvex manifolds admitting a positive line bundle,
and then prove Theorems~\ref{thm-main} and~\ref{thm:local-to-global-spray}.

\begin{theorem}\label{thm:weakly}
Let $X$ be a weakly pseudoconvex complex manifold admitting a
positive line bundle.
Then $X$ admits a local dominating spray.
\end{theorem}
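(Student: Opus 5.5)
We will apply Corollary~\ref{cor:criterion}. Let $B\to X$ be the positive line bundle, and take $E=(B^\vee)^{\oplus N}\otimes$ (a suitable power), namely $E=(B^{-k})^{\oplus N}$ for $k$ large. The plan has two steps. First, construct a surjective bundle morphism $\varphi\colon E\to T_X$. Second, show that $\alpha_\varphi(E)=0$.

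\emph{Surjectivity.} A morphism $\varphi\colon (B^{-k})^{\oplus N}\to T_X$ is the same as $N$ sections $\sigma_1,\dots,\sigma_N$ of $T_X\otimes B^{k}$. We need these to span $T_X\otimes B^k$ at every point. On a weakly pseudoconvex manifold with a positive line bundle, H\"ormander--Demailly $L^2$ estimates are available, with weights built from a psh exhaustion $\rho$ and the positive metric of $B$. They give the following. For $k$ large on each sublevel set, $T_X\otimes B^k\otimes e^{-\chi(\rho)}$ is globally generated by $L^2$ sections. However, $X$ may be noncompact, so a single fixed $k$ only works on compacta.

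To handle this, I would twist the metric on $B$ by $e^{-\chi(\rho)}$; this keeps the curvature positive and makes it arbitrarily large at infinity. With this metric, one fixed $k$ suffices for $T_X\otimes B^k$ to be generated at every point by global sections. A priori the generation needs infinitely many sections. Over an $n$-dimensional manifold, countably many generating sections can be reduced to finitely many, $N\le n+\dim X+1$ suffices, by a standard general-position (Forster--Eisenbud--Evans type) argument: generic linear combinations, with the set of bad combinations having infinite codimension estimates. This gives a surjective $\varphi$.

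\emph{Vanishing of the class.} Since $E=L^{\oplus N}$ with $L=B^{-k}$, its Atiyah class is $a(L)\otimes\id$ with $a(L)\in H^1(X,\Omega^1_X)$. Hence
\[
\alpha_\varphi(E)\in H^1(X,S^2E^\vee\otimes E)\cong H^1\bigl(X,(L^\vee)^{\otimes 2}\otimes L\bigr)^{\oplus M}=H^1(X,B^{k})^{\oplus M}.
\]
More precisely, the class $\alpha_\varphi(E)$ is the image of $a(L)\in H^1(X,\Omega_X^1)$ under the sheaf map $\Omega^1_X\to S^2E^\vee\otimes E$ induced by $\varphi$. This map factors through a bundle isomorphic to a direct sum of copies of $L^{-1}=B^k$. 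It therefore suffices that $H^1(X,B^k)=0$. This follows for $k$ large, with the twisted metric, from Nakano/Demailly vanishing on weakly pseudoconvex manifolds, because $B^k\otimes K_X^{-1}$ is positive.

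The main obstacle is choosing the metric twist $\chi$ so that both the global generation (with finitely many sections) and the vanishing of $H^1$ hold uniformly for one $k$ on the noncompact $X$. The finiteness reduction for generating sections is the second delicate point. Given both, Corollary~\ref{cor:criterion} yields the local dominating spray.
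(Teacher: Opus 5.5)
The gap is the claim that twisting the metric of $B$ by $e^{-\chi(\rho)}$ makes its curvature ``arbitrarily large at infinity''. You use this to get one fixed $k$ for which $T_X\otimes B^k$ is globally generated and $K_X^{-1}\otimes B^k$ is positive. Since $\rho$ is only psh, $\ddc\chi(\rho)$ is merely semipositive. It vanishes along every compact curve $C\subset X$, because $\rho|_C$ is constant by the maximum principle, so the twist changes no degree on such curves. Global generation also does not depend on metrics at all, and on a noncompact weakly pseudoconvex manifold it can fail for every $k$.

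For example, blow up $\CC^2$ at the points $p_j=(j,0)$. Then blow up $j-1$ further distinct points of the exceptional curve over $p_j$. The strict transform $C_j\cong\PP^1$ of that curve has $C_j^2=-j$ and meets the new exceptional curves $E_{j,2},\dots,E_{j,j}$ once each. The pull-back of $|z|^2$ is a psh exhaustion. The bundle $M=\cO\bigl(-\sum_j(jC_j+(j+1)\sum_iE_{j,i})\bigr)$ has degree $1$ on every $C_j$ and every $E_{j,i}$, so it is relatively ample. Take a metric on it that is positive near the exceptional curves, glue it to the flat metric elsewhere, and multiply by $e^{-\pi^*\psi}$ for a fast-growing strictly psh $\psi$ on $\CC^2$; this makes $M$ positive on $X$.

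Now $T_X\otimes M^k|_{C_j}$ surjects onto $N_{C_j}\otimes M^k|_{C_j}\cong\cO_{\PP^1}(k-j)$. Hence for no $k$ and $N$ is there a surjective morphism $(M^{-k})^{\oplus N}\to T_X$. Likewise $(K_X^{-1}\otimes M^k)\cdot C_j=k+2-j<0$ for $j>k+2$, so Nakano vanishing cannot be applied to $M^k=K_X\otimes(K_X^{-1}\otimes M^k)$ either. Your choice $E=(B^{-k})^{\oplus N}$ therefore cannot work in general, even though the theorem holds for this $X$.

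The missing idea is to choose the line bundle so that $K_X$ absorbs this negativity; note that $N_C\otimes K_X|_C\cong K_C$ by adjunction. The paper takes Takayama's (not necessarily proper) embedding $\iota\colon X\hookrightarrow\PP^N$ and puts $H=\iota^*\cO_{\PP^N}(1)$, $B=K_X\otimes H^{\otimes(2n+1)}$ and $E=(B^\vee)^{\oplus m}$. Then $H^1(X,B)=H^1(X,K_X\otimes H^{\otimes(2n+1)})=0$ by Nakano vanishing on weakly pseudoconvex manifolds, with no control of $K_X^{-1}$ needed. Also $T_X\otimes B\cong\bigwedge^{n-1}(\Omega^1_X\otimes H^{\otimes 2})\otimes H^{\otimes 3}$ is generated by finitely many sections. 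This is because $\Omega^1_X\otimes H^{\otimes2}$ is a quotient of $\iota^*\Omega^1_{\PP^N}(2)$, which is generated by the sections $z_i\,dz_j-z_j\,dz_i$. This gives the surjective $\varphi$ directly and makes your general-position reduction to finitely many sections unnecessary. Your reduction of $\alpha_\varphi(E)$, for $E=L^{\oplus N}$, to a class in $H^1$ of a sum of copies of $L^{-1}$ is correct. It is exactly how the paper concludes once this $B$ is chosen.
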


\begin{proof}[Proof of Theorem~\ref{thm:weakly}]
By Takayama's embedding theorem \cite{Takayama1998MA},
there exists an embedding $\iota\colon X\hookrightarrow\PP^N$,
which is not necessarily proper.
Set
$$
H:=\iota^*\mathcal{O}_{\PP^N}(1) \quad \text{and}\quad
B:=K_X\otimes H^{\otimes(2n+1)},
$$
where $n:=\dim X$.
We use the following claim.

\begin{claim}\label{claim-1}
The line bundle $B=K_X\otimes H^{\otimes(2n+1)}$ has the following properties:
\begin{itemize}
\item[$\bullet$] $B$ is positive.
\item[$\bullet$] $B$ is generated by finitely many sections.
\item[$\bullet$] Finitely many sections of $B$ define an embedding of $X$ into a projective space.
\item[$\bullet$] $T_X\otimes B$ is generated by finitely many sections.
\end{itemize}
\end{claim}

\begin{proof}[Proof of Claim~\ref{claim-1}]
Since $\iota\colon X\hookrightarrow\PP^N$ is an embedding,
$H$ is positive and generated by finitely many sections.
The natural surjection $\iota^*\Omega_{\PP^N}^1\to\Omega_X^1$
induces a surjective bundle morphism
$$
\iota^*\bigl(\Omega_{\PP^N}^1(2)\bigr)
\twoheadrightarrow
\Omega_X^1\otimes H^{\otimes2}.
$$
The Euler sequence on $\PP^N$ shows that $\Omega_{\PP^N}^1(2)$
is generated by the sections represented by $z_i\,dz_j-z_j\,dz_i$
for $0\leq i<j\leq N$.
Hence its quotient $\Omega_X^1\otimes H^{\otimes2}$ and all
exterior powers of this quotient are generated by finitely many sections.
This shows that
$$
B\otimes H^\vee\cong\det(\Omega_X^1\otimes H^{\otimes2})
$$
is generated by finitely many sections.
In particular, $B\otimes H^\vee$ admits a metric with semipositive curvature.
Tensoring this metric with the positive metric on $H$,
we conclude that $B$ is positive.
Products of generating sections of $B\otimes H^\vee$ and $H$
generate $B$.
Moreover, let $\psi$ be the morphism defined by finitely many
generating sections of $B\otimes H^\vee$.
The products of these sections with the sections of $H$ defining $\iota$
define the composition of $(\iota,\psi)$ with the Segre embedding.
Since $\iota$ is an embedding, so is this composition.
This proves the third property.
By the canonical isomorphism
$\bigwedge^{n-1}\Omega_X^1\cong T_X\otimes K_X$,
we have
$$
T_X\otimes B
\cong
\bigwedge^{n-1}
\bigl(\Omega_X^1\otimes H^{\otimes2}\bigr)
\otimes H^{\otimes3}.
$$
Both factors on the right are generated by finitely many sections,
which proves the last property.
\end{proof}
Choose finitely many sections
\begin{equation*}
\{\xi_i\}_{i\in I}\subset H^0(X,T_X\otimes B)
\end{equation*}
generating $T_X\otimes B$, and set $m:=|I|$.
Tensoring the corresponding evaluation morphism with $B^\vee$
gives a surjective bundle morphism
$$
\varphi\colon E:=(B^\vee)^{\oplus m}\longrightarrow T_X.
$$
Since $H$ is positive and $X$ is weakly pseudoconvex,
Nakano's vanishing theorem for weakly pseudoconvex manifolds
\cite{Nakano1973,Nakano1974} gives
$$
H^1(X,B)
=H^1\bigl(X,K_X\otimes H^{\otimes(2n+1)}\bigr)=0.
$$
As $S^2E^\vee\otimes E\cong B^{\oplus m\binom{m+1}{2}}$, it follows that
$H^1(X,S^2E^\vee\otimes E)=0$, and hence $\alpha_\varphi(E)=0$.
Corollary~\ref{cor:criterion} therefore yields a local
dominating spray on $E$, completing the proof.
\end{proof}

\begin{proof}[Proof of Theorem~\ref{thm:local-to-global-spray}]
Choose a metric $h_B$ on $B$ with positive curvature, and put $\omega_X=\sqrt{-1}\Theta_{h_B}$, which is a K\"ahler form. Equip $K_X^{-1}\otimes B^{\otimes k}$ with a metric of semipositive curvature, and $K_X^{-1}$ with the induced metric; then $\sqrt{-1}\Theta(K_X^{-1})\ge-k\,\omega_X$. Equip $E=(B^\vee)^{\oplus N}$ with the metric induced by $h_B^{-1}$. Then $\sqrt{-1}\Theta_E=-\omega_X\otimes\id_E$, and
\[
\sqrt{-1}\Theta(K_X^{-1}\otimes\det E)=\sqrt{-1}\Theta(K_X^{-1})-N\omega_X\ge-(k+N)\,\omega_X .
\]
Hence \eqref{eq:hyp} holds with the constant $c=k+N$. We identify $E$ with its total space, let $\pi\colon E\to X$ be the projection and $A$ the zero section, and let $s\colon\Omega\to X$ be the given local dominating spray.

\emph{The map.} Choose a continuous function $\delta>0$ on $X$ such that $\{|\xi|<\delta(\pi(\xi))\}\subset\Omega$, and put $\bar s(\xi)=s(t(\xi)\xi)$, where $t(\xi)=\min\{1,\delta(\pi(\xi))/(2|\xi|)\}$ for $\xi\ne0$ and $t=1$ on $A$. This is a continuous map $E\to X$ that equals $s$ on $U_0=\{|\xi|<\delta(\pi(\xi))/2\}$. The derivative of $s$ along $E_y$ at $0_y$ is $(d^{\mathrm v}s)_y$, which maps onto $T_{X,y}$. Hence Lemma~\ref{lem:fibresubmersive}, applied with $Y=X$ and $g=s|_{U_0}$, gives a neighbourhood $U'\subset U_0$ of $A$ and a product spray with core $\bar s|_{U'}$ that is dominating over $U'\setminus A$.

\emph{Conclusion.} Since $X$ is Oka, Theorem~\ref{thm:oka}, applied with $Y=X$, $f=\bar s$, $U=U'$ and $r=2$, gives a holomorphic map $\tilde s\colon E\to X$ that agrees with $\bar s$ to order $2$ along $A$. Hence $\tilde s(0_y)=s(0_y)=y$, and the differential of $\tilde s$ at $0_y$ coincides with that of $s$. In particular, $d^{\mathrm v}\tilde s=d^{\mathrm v}s$ maps onto $T_X$, so $\tilde s$ is a dominating spray on $E$.
\end{proof}

\begin{proof}[Proof of Theorem~\ref{thm-main}]
Let $H$, $B=K_X\otimes H^{\otimes(2n+1)}$ and $E=(B^\vee)^{\oplus m}$ be as in the proof of Theorem~\ref{thm:weakly}. By Claim~\ref{claim-1}, the line bundle $B$ is positive, and by the proof of Theorem~\ref{thm:weakly}, $E$ admits a local dominating spray.

Since $K_X^{-1}\otimes B\cong H^{\otimes(2n+1)}$ admits a metric with semipositive curvature, induced by the Fubini--Study metric, Theorem~\ref{thm:local-to-global-spray}, applied with $k=1$ and $N=m$, shows that $E=(B^\vee)^{\oplus m}$ admits a dominating spray.
\end{proof}

\begin{corollary}\label{cor:knowncases}
Stein Oka manifolds and projective Oka manifolds are elliptic.
\end{corollary}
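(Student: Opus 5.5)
The plan is to deduce the corollary from Theorem~\ref{thm-main} by checking its two hypotheses in each case: weak pseudoconvexity, and the existence of a positive line bundle.

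\emph{Stein case.} Let $X$ be a Stein Oka manifold. It admits a smooth strictly psh exhaustion function $\rho$, so it is weakly pseudoconvex. For the positive line bundle, take the trivial bundle $\cO_X$ with the Hermitian metric $e^{-\rho}$. Its curvature form is $\ddc\rho>0$, so this bundle is positive. Theorem~\ref{thm-main} then shows that $X$ is elliptic. This recovers Gromov's observation \cite[3.2.A]{Gromov1989JAMS}.

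\emph{Projective case.} Let $X$ be a projective Oka manifold. Being compact, $X$ is weakly pseudoconvex: any constant function is a psh exhaustion, because sublevel sets are closed subsets of a compact space. The restriction of $\cO_{\PP^N}(1)$ under a projective embedding, equipped with the Fubini--Study metric, is a positive line bundle. Theorem~\ref{thm-main} again gives ellipticity, which recovers \cite{Forstneric2026MRL}.

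The only point needing care is the convention for ``weakly pseudoconvex''. If the definition requires a smooth psh exhaustion, the constant function works on a compact manifold and $\rho$ works on a Stein manifold, so there is no genuine obstacle. Alternatively, both cases are already covered by the class of holomorphically convex manifolds admitting a positive line bundle, which the introduction notes is contained in the weakly pseudoconvex class.
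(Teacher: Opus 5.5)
Your proposal is correct and is essentially the paper's proof. You check weak pseudoconvexity and exhibit a positive line bundle: in the Stein case, the trivial bundle with metric $e^{-\rho}$ for a strictly psh exhaustion $\rho$, and in the projective case, the hyperplane bundle, since a compact manifold is weakly pseudoconvex. You then apply Theorem~\ref{thm-main}. The paper also notes separately that a point is elliptic, to handle the degenerate zero-dimensional case, where the construction behind Theorem~\ref{thm-main} (for instance the use of $\bigwedge^{n-1}\Omega^1_X$) does not literally make sense; you could add that one line.
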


\begin{proof}
A Stein manifold is weakly pseudoconvex, and the trivial line bundle with the metric $e^{-\psi}$, for a smooth strictly psh exhaustion function $\psi$, is positive. A projective manifold is compact, hence weakly pseudoconvex, and it has a positive line bundle. A point is elliptic. Hence Theorem~\ref{thm-main} applies.
\end{proof}

Both statements are known: the Stein case is classical \cite[Chap.~5]{Forstneric2017book}, and the projective case is due to Forstneri\v{c} and L\'arusson \cite{Forstneric2026MRL}.

\subsection*{Acknowledgements}
The first author was supported by JSPS KAKENHI Grant-in-Aid
for Early-Career Scientists (JP24K16919).
The second author was partially supported by JSPS KAKENHI
Grant-in-Aid for Scientific Research (B) (JP21H00976)
and the JST FOREST Program (JPMJFR2368).

\subsection*{AI disclosure}
Generative AI tools were used as a supplementary aid for English-language editing and mathematical discussions. The authors reviewed all AI-assisted material, independently verified the mathematical arguments, and take full responsibility for the content of this paper.

{\renewcommand{\addcontentsline}[3]{}

}

\end{document}